\long\def\symbolfootnote[#1]#2{\begingroup \def\thefootnote{\fnsymbol{footnote}}\footnote[#1]{#2}\endgroup}
\newcommand\mcnt{subsection}
\newtheorem{theorem}[\mcnt]{Theorem}
\newtheorem{lemma}[\mcnt]{Lemma}
\newtheorem{corollary}[\mcnt]{Corollary}
\newtheorem{proposition}[\mcnt]{Proposition}
\newtheorem{remark}[\mcnt]{Remark}
\newtheorem{example}[\mcnt]{Example}
\newtheorem{definition}[\mcnt]{Definition}
\providecommand\eqref[1]{(\ref{#1})}
\newcommand\NNN{{\mathbb N}}
\newcommand\RRR{{\mathbb R}}
\newcommand\ZZZ{{\mathbb Z}}
\newcommand\BB{{\mathcal B}}
\newcommand\NN{{\mathcal N}}
\newcommand\XX{{\mathcal X}}
\newcommand\YY{{\mathcal Y}}
\newcommand\id{\mathrm{id}}
\newcommand\grad{\triangledown} 
\newcommand\Stab{\mathcal{S}}
\newcommand\Diff{\mathcal{D}}
\newcommand\End{{\mathcal{E}}}
\newcommand\eps{\varepsilon}
\newcommand\AFlow{\mathbf{G}}
\newcommand\AFld{G}
\newcommand\BFlow{\mathbf{F}}
\newcommand\BFld{F}
\newcommand\HFld{H}
\newcommand\Dom[1]{\mathsf{dom}(#1)}
\newcommand\BDom{\Dom{\BFlow}}
\newcommand\Shift{\varphi}
\newcommand\hShift{\hat\Shift}
\newcommand\ShBV{\Shift_{\Vman}}
\newcommand\BFunc{f}
\newcommand\GFunc{\Gamma}
\newcommand\PFunc{P}
\newcommand\Pmap{Q}
\newcommand\Qmap{P}
\newcommand\Hmap{H}
\newcommand\difM{h}
\newcommand\gdifM{\tau}
\newcommand\afunc{\alpha}
\newcommand\bfunc{\beta}
\newcommand\cfunc{\gamma}
\newcommand\vfunc{v}
\newcommand\dg{p}
\newcommand\Vman{V}
\newcommand\Nbh{\mathcal{U}}
\newcommand\EndFlowNbh[2]{\End(#1,#2)}
\newcommand\EndBV{\EndFlowNbh{\BFlow}{\Vman}}
\newcommand\EndBVi[1]{\End_{#1}(\BFlow,\Vman,\orig)}
\newcommand\gEnd{\hat{\Diff}}
\newcommand\gEndId{\gEnd_{\id}}
\newcommand\gEndBz{\gEnd(\BFlow)}
\newcommand\gEndIdBzr[1]{\gEndId(\BFlow)^{#1}}
\newcommand\Poly[1]{\mathcal{P}_{#1}}
\newcommand\singA{\Sigma_{\AFld}}
\newcommand\singB{\Sigma_{\BFld}}
\newcommand\ord{\mathrm{ord}}
\newcommand\AST{{\rm($\ast$)}}
\newcommand\Nman{N}
\newcommand\EndBViz[1]{X_{\Qmap}} 
\newcommand\orig{O}
\newcommand\GL{\mathrm{GL}}
\newcommand\Lmatr{L}
\newcommand\Umatr{U}
\newcommand\Asubm{V}
\newcommand\rsm{r}
\newcommand\contWW[2]{C^{#1,#2}_{W,W}}
\newcommand\Bsh[1]{\BFlow_{#1}}
\newcommand\hBsh[1]{\BFlow_{#1}}
\newcommand\qBsh[2]{\BFlow_{#1,#2}}
\newcommand\hBshh[1]{\qBsh{\difM}{#1}}
\newcommand\gimShB{\hat{Sh}(\BFlow)}
\newcommand\jo[1]{j^{#1}}
\newcommand\jequ[3]{\jo{#3}(#1) =\jo{#3}(#2)}
\newcommand\orb{\gamma}
\newcommand\Mman{M}
\newcommand\funcBV{\mathsf{func}(\BFlow,\Vman)}
\newcommand\funcB{\mathsf{func}(\BFlow)}
\newcommand\imShBV{Sh(\BFlow,\Vman)}
\newcommand\imShB{Sh(\BFlow)}
\newcommand\EidBVr[1]{\mathcal{E}_{\id}(\BFlow,\Vman)^{#1}}
\newcommand\EidBr[1]{\mathcal{E}_{\id}(\BFlow)^{#1}}
\newcommand\EBV{\mathcal{E}(\BFlow,\Vman)}
\newcommand\ShB{\Shift}
\newcommand\EBVi{\End_{\infty}(\BFlow,\Vman)}
\newcommand\DiffRno{\hat{\mathcal{D}}(\RRR^n)}
\newcommand\EndRno{\hat{\mathcal{E}}(\RRR^n)}
\newcommand\FuncRno{\hat{\mathcal{F}}(\RRR^n)}
\newcommand\EndRnoR{\hat{\mathcal{E}}(\RRR^n,\orig;\RRR^1)}
\newcommand\EndRnoRno{\hat{\mathcal{E}}(\RRR^n,\orig;\RRR^n,\orig)}
\newcommand\EndRnoRm{\hat{\mathcal{E}}(\RRR^n,\orig;\RRR^m)}
\newcommand\EndRnoRmo{\hat{\mathcal{E}}(\RRR^n,\orig;\RRR^m,\orig)}
\newcommand\JSh{\mathcal{J}}
\newcommand\hJSh{\hat{\JSh}}
\newcommand\grp{\mathcal{G}}
\newcommand\gJShBO{\hJSh(\BFlow)}
\newcommand\gJShBOi[1]{\hJSh_{#1}(\BFlow)}
\newcommand\JShBV{\JSh(\BFlow,\Vman)}
\newcommand\JShBVi[1]{\JSh_{#1}(\BFlow,\Vman)}
\newcommand\DiffMz{\hat{\mathcal{D}}(\Mman,z)}
\newcommand\EndMz{\hat{\mathcal{E}}(\Mman,z)}
\newcommand\EndMzM{\hat{\mathcal{E}}(\Mman,z;\Mman)}
\newcommand\FuncMz{\hat{\mathcal{F}}_{z}(\Mman)}
\newcommand\gEBz{\gEnd(\BFlow,z)}
\newcommand\gEidBzr[1]{\gEndId(\BFlow,z)^{#1}}
\newcommand\gimShBz{\hat{Sh}(\BFlow,z)}
\newcommand\gEBO{\gEnd(\BFlow)}
\newcommand\gEidBOr[1]{\gEndId(\BFlow)^{#1}}
\newcommand\gimShBO{\hat{Sh}(\BFlow)}
\newcommand\funcBM{\mathsf{func}(\BFlow,\Mman)}
\newcommand\imShBM{Sh(\BFlow,\Mman)}
\newcommand\EidBMr[1]{\mathcal{E}_{\id}(\BFlow,\Mman)^{#1}}
\newcommand\coA{{\rm(A1)}}
\newcommand\coB{{\rm(A2)}}
\newcommand\coC{{\rm(A3)}}
\newcommand\JRnok{\hat{J}^{k}(\RRR^n)}
\newcommand\JRnoi{\hat{J}^{\infty}(\RRR^n)}
\newcommand\coiA{{\rm(G1)}}
\newcommand\coiB{{\rm(G2)}}
\newcommand\coiC{{\rm(G3)}}
\newcommand\SCT{\Lambda}
\newcommand\EJ{\mathcal{H}}
\newcommand\SI{\Psi}
\newcommand\RS{\sigma}
\newcommand\JS{\SCT}
\begin{document}
\title[$\infty$-jets of orbit preserving diffeomorphisms]
{$\infty$-jets of diffeomorphisms preserving orbits of vector fields}
\author{Sergiy Maksymenko}

\begin{abstract}
Let $F$ be a $C^{\infty}$ vector field defined near the origin $O\in\mathbb{R}^{n}$, $F(O)=0$, and $(\mathbf{F}_{t})$ be its local flow.
Denote by $\hat{\mathcal E}(F)$ the set of germs of orbit preserving diffeomorphisms $h:\mathbb{R}^{n}\to\mathbb{R}^{n}$ at $\orig$, and let $\hat{\mathcal E}_{\mathrm{id}}(F)^{r}$, $(r\geq0)$, be the identity component of $\hat{\mathcal E}(F)$ with respect to $C^{r}$ topology.
Then $\hat{\mathcal E}_{\mathrm{id}}(F)^{\infty}$ contains a subset $\hat{Sh}(F)$ consisting of maps of the form $\mathbf{F}_{\alpha(x)}(x)$, where $\alpha:\mathbb{R}^n\to\mathbb{R}$ runs over the space of all smooth germs at $\orig$.
It was proved earlier by the author that if $F$ is a linear vector field, then 
$\hat{Sh}(F) = \hat{\mathcal E}_{\mathrm{id}}(F)^{0}.$

In this paper we present a class of examples of vector fields with degenerate singularities at $O$ for which $\hat{Sh}(F)$ \emph{formally} coincides with $\hat{\mathcal E}_{\mathrm{id}}(F)^{1}$, i.e. \emph{on the level of $\infty$-jets at $\orig$}.

We also establish parameter rigidity of linear vector fields and ``reduced'' Hamiltonian vector fields of real homogeneous polynomials in two variables.
\end{abstract}
\maketitle

{\em Keywords:} 
orbit preserving diffeomorphism, parameter rigidity, Bo\-rel's theorem.

{\em AMSClass:} 37C10 

\protect\vskip-3cm
\section{Introduction}\label{sect:intro}
Let $\BFld$ be a smooth ($C^{\infty}$) vector field on a smooth manifold $\Mman$, $(\BFlow_t)$ be the local flow generated by $\BFld$, and $\singB$ be the set of singular points of $\BFld$.
In this paper we consider smooth maps $\difM:\Mman\to\Mman$ preserving the (singular) foliation on $\Mman$ by orbits of $\BFld$, i.e.\! $\difM(\Mman\cap \orb)\subset\orb$ for every orbit $\orb$ of $\BFld$.

The groups of leaf preserving diffeomorphisms and homeomorphisms of foliations are intensively studied.
Most of the results concern with regular foliations, see e.g.~\cite{Banyaga:T:1977,Rybicki:SJM:1996,Rybicki:DM:1996,AbeFukui:CEJM:2003} and references in these papers.
For singular foliations the situation is much more difficult.
Therefore usually foliations by orbits of actions of finite-dimensional Lie groups are considered, e.g.~\cite{Schwarz:T:1975,Mather:T:1977, AbeFukui:T:2001, Rybicki:TA:2007}.
Homeomorphisms preserving foliations of vector fields are studied e.g. in~\cite{CamachoNeto:LNM:1977, GutierrezMelo:1977}.

The approach used in this paper is specific for the case of flows.
By definition for every $x\in\Mman$ its image $\difM(x)$ belongs to the orbit $\orb_x$ of $x$.
Therefore we want \emph{to associate to $x$ the time $\afunc_{\difM}(x)$ between $x$ and $\difM(x)$ along $\orb_x$}, so that 
\begin{equation}\label{equ:h_is_a_shift}
  \difM(x) = \BFlow_{\afunc_{\difM}(x)}(x).
\end{equation}
We will call $\afunc_{\difM}$ a \emph{shift function} for $\difM$, which in turn will be called the \emph{shift} along orbits of $\BFld$ via $\afunc_{\difM}$.

Such ideas were used e.g.\! in~\cite{EHoph:1937,Chacon:JMM:1966,Totoki:MFCKUS:1966, Kowada:JMSJ:1972,Parry:JLMS:1972,Kochergin:IANSSSR:1973} and others for reparametrizations of measure preserving flows and study their mixing properties.
In these papers $\afunc_{\difM}$ is required to be measurable, so it can even be discontinuous and its values on subsets of measure $0$ can be ignored.
In~\cite{OrnsteinSmorodsky:IJM:1978} continuity of shift functions was investigated.
In contrast, we work in $C^{\infty}$ category and require that \emph{$\afunc_{\difM}$ is $C^{\infty}$ whenever so is $\difM$}.
One of the main problems here is to define $\afunc_{\difM}$ near a singular point of a vector field, see~\ref{subsect:sigma_def_pt}-\ref{subsect:h_dif_at_fix}.

Smooth shift functions were used in authors papers~\cite{Maks:TA:2003,Maks:AGAG:2006,Maks:BSM:2006} for calculations of homotopy types of certain infinite-dimensional spaces.
The present paper brings another application of shift functions to smooth reparametrization of flows and in particular to parameter rigidity.

In~\cite{Maks:TA:2003} the problem of finding representation~\eqref{equ:h_is_a_shift} was solved for linear flows.
It was shown that if $\BFld$ is a linear vector field on $\RRR^n$, then for every diffeomorphism $\difM:\RRR^n\to\RRR^n$ preserving orbits of $\BFld$ and being isotopic to the identity map $\id_{\RRR^n}$ via an orbit preserving isotopy there exists a $C^{\infty}$ shift function $\afunc_{\difM}$.
Moreover, if a family $\difM_{s}$ of orbit preserving diffeomorphisms smoothly depends on some $k$-dimensional parameter $s$, then so does the family $\afunc_{\difM_s}$ of their shift functions%
\protect\symbolfootnote[2]{
I must warn the reader that my paper~\cite{Maks:TA:2003} contains mistakes in the estimations of continuity of the correspondence $\difM\mapsto\afunc_{\difM}$ regarded as a map between certain functional spaces.
In particular in~\cite[Defn.~15]{Maks:TA:2003} it should be additionally required that the image $\varphi_{V}(\mathcal{M})$ is at least $C^{\infty}_{W}$-open in the image of the map $\varphi_{V}$.
Without this assumption~\cite[Th.~17]{Maks:TA:2003} is not true.
Moreover in \cite[Lm.~31]{Maks:TA:2003} the mapping $Z^{-1}$ is $C^{r+1,r}_{W,W}$-continuous in the real case and only $C^{\infty,\infty}_{W,W}$-continuous in the complex case.
As a result the formulations of~\cite[Th~1, Th.27 (part concerning (S)-points) \& Lm.~28]{Maks:TA:2003} should be changed.

Unfortunately  \cite[Th.~1]{Maks:TA:2003} was essentially used in~\cite{Maks:AGAG:2006} for the calculations of the homotopy types of stabilizers and orbits of Morse functions on surfaces.
We will repair the mistakes of~\cite{Maks:TA:2003} in another paper and show that the part of results~\cite{Maks:TA:2003} used in~\cite{Maks:AGAG:2006} remains true.

Also notice that the formula~\cite[Eq.~(10)]{Maks:TA:2003} for the shift functions at regular points is misprinted.
It must be read as follows:
$\alpha(x) = p_1\circ f(x) - p_1\circ \Phi(x,a)+a.$
}\label{foot:1}.

Our first result claims that the last two properties easily imply parameter rigidity of a vector field, see Theorem~\ref{th:param-rigid}.
In particular, as a consequence of~\cite{Maks:TA:2003}, we obtain parameter rigidity of linear vector fields and their regular extensions.
Notice that this statement together with the result of S.\;Sternberg~\cite{Sternberg:AJM:1957} implies parameter rigidity of a large class of ``hyperbolic'' flows, which agrees with discovered about thirty years ago rigidity of locally free hyperbolic actions of certain Lie groups~\cite{KatokSpatzier:PMIHES:1994}.
Though we consider actions of the one-dimensional group $\RRR$ only, Theorem~\ref{th:param-rigid} is nevertheless stronger in the part that we admit fixed points, i.e.\! non-locally free actions.

Further we deal with the situation when $\BFld$ is a vector field defined on some neighbourhood $\Vman$ of the origin $\orig\in\RRR^n$ being its singular point.
Denote by $\gimShBO$ the group of germs at $\orig$ of smooth shifts, i.e. maps of the form~\eqref{equ:h_is_a_shift}.
In \S\ref{sect:charact_inf_jets} we describe the structure of $\infty$-jets of elements of $\gimShBO$ and establish a necessary and sufficient condition for a certain group $\grp$ of germs of diffeomorphisms of $(\RRR^n,\orig)$ to coincide with $\gimShBO$ on the level of $\infty$-jets at $\orig$, see Theorem~\ref{th:grp_subset_JSh}.

Let $\JShBV$ be the space of all smooth maps $\difM:\Vman\to\RRR^{n}$ whose $\infty$-jet $\jo{\infty}(\difM)$ at $\orig$ coincides with the $\infty$-jet of some shift $\BFlow_{\afunc_{\difM}}$, where $\afunc_{\difM}\in C^{\infty}(\Vman,\RRR)$.
Though in general such a function $\afunc_{\difM}$ is not unique, we show in \S\S\ref{sect:Borel_th},\ref{sect:cont-corresp} that it can be chosen so that the correspondence $\difM\mapsto\afunc_{\difM}$ becomes a continuous (and in a certain sense smooth) map $\JS:\JShBV \supset \XX\to C^{\infty}(\Vman,\RRR)$ defined on some subset $\XX$ of $\JShBV$ (Theorem~\ref{th:cont-estimations}).
Actually \S\ref{sect:Borel_th} contains a variant of a well-known theorem of E.~Borel about smooth functions with given Taylor series (Theorem~\ref{th:Borel}).
This theorem is then used in \S\ref{sect:cont-corresp} for the construction of $\JS$.
In \S\ref{sect:appl_jinf_sect} we also present application of $\JS$ to the problem of resolving~\eqref{equ:h_is_a_shift}.

Denote by $\gEBO$ the group of germs of orbit preserving diffeomorphisms for $\BFld$, and let $\gEidBOr{1}$ be its path component with respect to weak $C^{1}_{W}$ topology.
In \S\ref{sect:prop-AST} we introduce a certain condition \AST\ on $\BFld$ guaranteeing that $\gimShBO$ coincides with $\gEidBOr{1}$ on the (formal) level of $\infty$-jets, see Theorem~\ref{th:AST_implies_infjets}.
The proof of this theorem is given in \S\S\ref{sect:prelim:th:AST_implies_infjets},\ref{sect:th:AST_implies_infjets}.

Finally, in \S\ref{sect:vf_on_r2} we present a class of vector fields on $\RRR^2$ satisfying condition \AST\ and explain that for these vector fields $\gimShBO=\gEidBOr{1}$.
This improves results of~\cite{Maks:Hamvf:2006}, which were based on a previous (unpublished) version of this paper (Theorem~\ref{th:HamVF}).

In another paper the last theorem will be used to extend calculations of~\cite{Maks:AGAG:2006} to a large class of functions with degenerate singularities on surfaces.

\subsection{Preliminaries}
Let $A$ and $B$ be smooth manifolds.
Then for every $r=0,1,\ldots,\infty$ we can define the weak $C^{r}_{W}$ topology on $C^{\infty}(A,B)$, see e.g.~\cite{GolubitskyGuillemin,Hirsch:DiffTop}.
We will assume that the reader is familiar with these topologies.
It easily follows from definition that topology $C^{0}_{W}$ coincides with the \emph{compact open} one.
Moreover, let $J^{r}(A,B)$ be the manifold of $r$-jets of $C^{r}$ maps $A\to B$.
Associating to every $\difM\in C^{\infty}(A,B)$ its $r$-jet extension being an element of $J^{r}(A,B)$, we obtain a natural inclusion $C^{\infty}(A,B) \subset C^{r}(A,J^{r}(A,B))$.
Then $C^{r}_{W}$ topology on $C^{\infty}(A,B)$ can be defined as the topology induced by $C^{0}_{W}$ topology of $C^{r}(A,J^{r}(A,B))$.

We say that a subset $\XX\subset C^{\infty}(A,B)$ is \emph{$C^{k}_{W}$-open\/} if it is open with respect to the induced $C^{k}_{W}$-topology of $C^{\infty}(A,B)$.
\begin{definition}\label{defn:r-homotopies}
Let $\Hmap:A\times I \to B$ be a homotopy such that for every $t\in I$ the mapping $\Hmap_t:A\to B$ is $C^{r}$.
We will call $\Hmap$ an {\bfseries $r$-homotopy} if the following map
$$j^{r}\Hmap: A \times I \to J^{r}(A, B), 
\qquad (a,t) \mapsto j^{r}(\Hmap_t)(a),$$ 
associating to every $(a,t)\in A \times I$ the $r$-th jet of $\Hmap_t$ at $a$, is continuous.
In local coordinates this means that $\Hmap_t$ and all its partial derivatives ``along $A$'' are continuous in $(a,t)$.
In particular, every $C^{r}$-homotopy is an $r$-homotopy as well.

Equivalently, regarding a homotopy $\Hmap$ as a path $\hat\Hmap:I\to C^{\infty}(A,B)$ defined by $\hat\Hmap(t)(a)=\Hmap(a,t)$, we see that $\Hmap$ is an {\bfseries $r$-homotopy} if and only if $\hat\Hmap$ is a continuous path into $C^{r}_{W}$-topology of $C^{\infty}(A,B)$.

If $\Hmap$ is an $r$-homotopy consisting of embeddings, it will be called an {\bfseries $r$-isotopy}.
\end{definition}

Let $C$ and $D$ be some other smooth manifolds and $\YY\subset C^{\infty}(C,D)$ be a subset.
A map $u:\XX\to\YY$ will be called  \emph{$C^{s,r}_{W,W}$-continuous} if it is continuous from $C^{s}_{W}$-topology of $\XX$ to $C^{r}_{W}$-topology of $\YY$, $(r,s=0,1,\ldots,\infty)$.

\begin{definition}\label{defn:pres_smoothness}
We will say that $u:\XX\to\YY$ {\bfseries preserves smoothness} if for any $C^{\infty}$ map $H:A\times \RRR^n\to B$ such that $H_t=H(\cdot,t)\in \XX$ for all $t\in\RRR^n$ the following mapping
$$
u(H): C\times \RRR^n\to D, 
\qquad 
u(H)(c,t) = u(H_t)(c)
$$
is $C^{\infty}$ as well.
\end{definition}

\section{Obstructions for shift functions}\label{sect:shift-map}
In this section we briefly discuss obstructions for smooth resolvability of~\eqref{equ:h_is_a_shift}.
\subsection{}\label{subsect:sigma_def_pt}
\emph{Evidently, the value $\afunc_{\difM}(z)$ is uniquely defined only if $z$ is regular and non-periodic for $\BFld$.
If $z$ is a periodic point of period $\theta$, then $\afunc_{\difM}(z)$ is defined only up to a constant summand $n\theta$, $(n\in\ZZZ)$, while if $z$ is fixed, we can set $\afunc_{\difM}(z)$ to arbitrary number.}

In applications to ergodic flows this problem usually does not appear: the union of periodic and fixed points is an invariant subset, therefore it can be assumed to have measure $0$.
Hence the values of $\afunc_{\difM}$ on this set may be ignored.
Sometimes it is also assumed that the set of periodic points is empty, e.g.~\cite[p.357]{Kowada:JMSJ:1972}.

\subsection{}\label{subsect:sigma_smooth_at_reg}
\emph{If $z$ is a regular (even periodic) point of $\BFld$, then $\afunc_{\difM}$ can be smoothly defined on some neighbourhood of \begin{large}                                                                                                                                   \end{large}$z$}, see~\cite[\S3.1]{Maks:TA:2003} and footnote on page~\pageref{foot:1} for correct reading of ~\cite[Eq.~(10)]{Maks:TA:2003}.

\subsection{}\label{subsect:h_isot_id}
\emph{Representation~\eqref{equ:h_is_a_shift} with smooth $\afunc_{\difM}$ implies that $\difM$ is homotopic to the identity $\id$ via a smooth orbit preserving homotopy}.
For instance we can take the following one: $\difM_t(x)=\BFlow(x,t\,\afunc_{\difM}(x))$.

\subsection{}\label{subsect:sigma_def_reg_h_isot}
Conversely, \emph{if $\difM$ is homotopic to $\id$ via some smooth orbit preserving homotopy, then (using this homotopy and~\ref{subsect:sigma_smooth_at_reg}) we can smoothly define $\afunc_{\difM}$ on the set of regular points of $\BFld$, see~\cite[Th.~25]{Maks:TA:2003}, though $\afunc_{\difM}$ can even be discontinuous at singular points of $\BFld$}, 
In general, $\afunc_{\difM}$ depends on a particular homotopy.

\subsection{}\label{subsect:h_dif_at_fix}
\emph{If $\afunc_{\difM}$ can be defined at some singular point $z$ of $\BFld$ so that it becomes smooth near $z$, then $\difM$ must be an embedding at $z$, \cite[Cor.~21]{Maks:TA:2003}}.

\section{Shift map}\label{sect:shift_map}
Observations of the previous section lead to the following construction of shift map used in~\cite{Maks:TA:2003}.

Let $\Mman$ be a smooth manifold and $\BFld$ be a vector field on $\Mman$ tangent to $\partial\Mman$.
Then for every $x\in\Mman$ its orbit with respect to $\BFld$ is a unique mapping
$\orb_x: \RRR\supset(a_x,b_x) \to \Mman$ such that $\orb_x(0)=x$ and $\dot{\orb}_x=\BFld(\orb_x)$, where $(a_x,b_x) \subset\RRR$ is the maximal interval on which a map with the previous two properties can be defined.
Then
$$
\BDom = \mathop\cup\limits_{x\in\Mman} x \times (a_x, b_x),
$$
is an open neighbourhood of $\Mman\times0$ in $\Mman\times\RRR$, and by definition the \emph{local flow\/} of $\BFld$ is the following map
$$\BFlow: \Mman\times\RRR \; \supset \;\BDom\longrightarrow\Mman,
\qquad
\BFlow(x,t) = \orb_x(t).
$$
If $\Mman$ is compact, or more generally if $\BFld$ has compact support, then $\BDom=\Mman\times\RRR$ and thus $\BFlow$ is \emph{global}, i.e. is defined on all of $\Mman\times\RRR$, see e.g.~\cite{PalisdeMelo}.

For every open $\Vman \subset \Mman$ denote by $\funcBV$ the subset of $C^{\infty}(\Vman,\RRR)$ consisting of functions $\afunc$ whose graph $\Gamma_{\afunc}=\{(x,\afunc(x)) \ : \ x\in\Vman\}$ is contained in $\BDom$.
Then we can define the following map
\begin{equation}\label{equ:glob_shift_map}
\begin{array}{c}
\ShBV: C^{\infty}(\Vman,\RRR) \;\supset\;  \funcBV\; \longrightarrow \;C^{\infty}(\Vman,\Mman), \\[2mm]
\ShBV(\afunc)(x) = \BFlow(x,\afunc(x)),
\end{array}
\end{equation}
which will be called the \emph{shift map} of $\BFld$ on $\Vman$.
Its image in $C^{\infty}(\Vman,\Mman)$ will be denoted by $\imShBV$.
If $\BFlow$ is global, then $\funcBV=C^{\infty}(\Vman,\RRR)$.

It is easy to see that $\ShBV$ is $C^{r,r}_{W,W}$-continuous for all $r=0,1,\ldots,\infty$, \cite[Lemma~2]{Maks:TA:2003}.
Moreover, if the set $\singB$ of singular points of $\BFld$ is nowhere dense in $\Vman$, then $\ShBV$ is locally injective with respect to any $C^{r}_{W}$ topology of $\funcBV$, \cite[Prop.~14]{Maks:TA:2003}.

Denote by $\EBV \subset C^{\infty}(\Vman,\Mman)$ the subset consisting of all smooth maps $\difM:\Vman\to\Mman$ such that
\begin{itemize}
\item
$\difM(\omega\cap\Vman)\subset\omega$ for every orbit $\omega$ of $\BFld$, in particular $\difM$ is fixed on $\singB\cap\Vman$, and
\item
$\difM$ is a local diffeomorphism at every $z\in\singB\cap\Vman$.
\end{itemize}
Let $\EidBVr{r}$, $(0\leq r\leq \infty)$, be the path component of the identity embedding $i_{\Vman}:\Vman\subset\Mman$ in $\EndBV$ with respect to $C^{r}_{W}$-topology, i.e. $\EidBVr{r}$ consists of all smooth maps $\difM:\Vman\to\Mman$ which are $r$-homotopic to $i_{\Vman}$ in $\EBV$.

Then we have the following inclusions:
\begin{equation}\label{equ:inclusions_sh_concmp}
\imShBV \subset \EidBVr{\infty} \subset\cdots\subset \EidBVr{1} \subset \EidBVr{0}.
\end{equation}
The first one follows from~\ref{subsect:h_isot_id} and~\ref{subsect:h_dif_at_fix}, and all others are evident.

For $\Vman=\Mman$ we denote
\ $\Shift=\Shift_{\Mman}$, 
\ $\funcB=\funcBM$, 
\ $\imShB=\imShBM$, and
\ $\EidBr{r}=\EidBMr{r}$.

\subsection{Local shift map.}
For $z\in\Mman$ let $\FuncMz$ be the algebra of germs at $z$ of smooth functions $\Mman\to\RRR$, $\EndMzM$ be the space of germs at $z$ of all smooth maps $f:\Mman\to\Mman$, $\EndMz \subset \EndMzM$ be the subset consisting of germs $f$ such that $f(z)=z$, and $\DiffMz\subset\EndMz$ be the subset consisting of germs of all diffeomorphisms.

We want to define the following \emph{local shift map} analogous to~\eqref{equ:glob_shift_map}:
$$
\hShift:\FuncMz\to \EndMzM,
\qquad
\hShift(\afunc)(x) = \BFlow(x,\afunc(x)).
$$
If $\BFlow$ is not global, then as well as in the definition of shift map $\ShB$, see~\eqref{equ:glob_shift_map}, $\hShift$ is defined only on a certain subset of $\FuncMz$.
Nevertheless, the following lemma shows that if $z$ is a singular point of $\BFld$, then $\hShift$ is well-defined and its image is contained in $\DiffMz$.
\begin{lemma}\label{lm:shift-types}
Suppose that $\BFld(z)=0$.
For $\difM\in \EndMz$ and $\afunc,\bfunc\in \FuncMz$ define the following two maps $\Bsh{\afunc},\hBshh{\bfunc}$ by
$$
\Bsh{\afunc}(x) = \BFlow(x,\afunc(x)),
\qquad
\hBshh{\bfunc}(x) = \BFlow(\difM(x),\bfunc(x)).
$$
Then the following statements hold true.
\begin{enumerate}
 \item[\rm(a)] 
The germs at $z$ of $\Bsh{\afunc}$ and $\hBshh{\bfunc}$ are well-defined.
 \item[\rm(b)]
$\Bsh{\afunc},\Bsh{\bfunc}\in\gimShB$ are germs of a diffeomorphisms at $z$ and 
\begin{equation}\label{equ:Bsh_hBsh_0}
\Bsh{\afunc}^{-1} = \Bsh{-\afunc\circ \Bsh{\afunc}^{-1}}\,, 
\qquad\qquad
\Bsh{\afunc}\circ\Bsh{\bfunc} = \Bsh{\afunc \circ \Bsh{\bfunc} + \bfunc}\,.
\end{equation}
\item[\rm(c)]
If $\difM$ is a germ of a diffeomorphism, then so is $\hBshh{\bfunc}$ and 
\begin{equation}\label{equ:Bsh_hBsh_1}
\hBshh{\bfunc} = \Bsh{\bfunc\circ\difM^{-1}}\circ \difM.
\end{equation}
 \item[\rm(d)]
The following conditions are equivalent:
\begin{equation}\label{equ:Bsh_hBsh_4}
\difM = \Bsh{\afunc} \qquad\qquad \Leftrightarrow \qquad\qquad \hBshh{-\afunc} = \id.
\end{equation}
\end{enumerate}
\end{lemma}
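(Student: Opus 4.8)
The plan is to isolate part~(a) as the only place that needs care with the (possibly non-global) domain $\BDom$ of the local flow, and to derive (b)--(d) from it by purely formal manipulations of the semigroup identity of $\BFlow$. For~(a), I would begin from the remark that $\BFld(z)=0$ forces the orbit $\orb_z$ to be the constant map $t\mapsto z$, whose maximal interval is all of $\RRR$, so that $\{z\}\times\RRR\subset\BDom$. Since $\BDom$ is open in $\Mman\times\RRR$, the points $(z,\afunc(z))$ and $(\difM(z),\bfunc(z))=(z,\bfunc(z))$ each lie in a product neighbourhood $W\times J\subset\BDom$, and continuity of $\afunc$ (respectively of $\difM$ and $\bfunc$) yields a smaller neighbourhood $W'$ of $z$ with $(x,\afunc(x))\in\BDom$ (respectively $(\difM(x),\bfunc(x))\in\BDom$) for all $x\in W'$; hence the germs at $z$ of $\Bsh{\afunc}$ and $\hBshh{\bfunc}$ are well defined. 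Shrinking $W'$ once more I would also arrange that, for $x\in W'$ and $s,t$ in any prescribed bounded interval, all flow values written below are defined and the local flow law $\BFlow(\BFlow(x,s),t)=\BFlow(x,s+t)$ holds; this is the only ingredient used in the remaining parts.

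For~(b), the second formula of~\eqref{equ:Bsh_hBsh_0} is the one-line computation
\[
\Bsh{\afunc}\circ\Bsh{\bfunc}(x)=\BFlow\bigl(\BFlow(x,\bfunc(x)),\afunc(\Bsh{\bfunc}(x))\bigr)=\BFlow\bigl(x,\bfunc(x)+\afunc(\Bsh{\bfunc}(x))\bigr)=\Bsh{\afunc\circ\Bsh{\bfunc}+\bfunc}(x),
\]
together with $\Bsh{0}=\id$ (since $\BFlow(x,0)=x$); and $\Bsh{\afunc},\Bsh{\bfunc}\in\gimShB$ by the very definition of $\gimShB$. That $\Bsh{\afunc}$ is a germ of a diffeomorphism I would get by differentiating at $z$: since $\partial_t\BFlow(z,t)=\BFld(\BFlow(z,t))=\BFld(z)=0$, the chain rule shows that $D\Bsh{\afunc}(z)$ equals the differential at $z$ of the (local) time-$\afunc(z)$ flow diffeomorphism, which is invertible, so the inverse function theorem applies. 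The first formula of~\eqref{equ:Bsh_hBsh_0} then follows from the composition law, since
\[
\Bsh{-\afunc\circ\Bsh{\afunc}^{-1}}\circ\Bsh{\afunc}=\Bsh{(-\afunc\circ\Bsh{\afunc}^{-1})\circ\Bsh{\afunc}+\afunc}=\Bsh{0}=\id,
\]
so that, $\Bsh{\afunc}$ being invertible, its inverse is $\Bsh{-\afunc\circ\Bsh{\afunc}^{-1}}$.

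For~(c), with $\difM$ now a germ of a diffeomorphism I would set $y=\difM(x)$ and compute
\[
\hBshh{\bfunc}(x)=\BFlow(\difM(x),\bfunc(x))=\BFlow\bigl(y,\bfunc(\difM^{-1}(y))\bigr)=\Bsh{\bfunc\circ\difM^{-1}}(\difM(x)),
\]
which is~\eqref{equ:Bsh_hBsh_1}; being a composite of the diffeomorphism germs $\Bsh{\bfunc\circ\difM^{-1}}$ (by~(b)) and $\difM$, the map $\hBshh{\bfunc}$ is a diffeomorphism germ. For~(d), if $\difM=\Bsh{\afunc}$ then $\hBshh{-\afunc}(x)=\BFlow(\BFlow(x,\afunc(x)),-\afunc(x))=\BFlow(x,\afunc(x)-\afunc(x))=x$, and conversely, applying $\BFlow(\cdot,\afunc(x))$ to the identity $\BFlow(\difM(x),-\afunc(x))=x$ gives $\difM(x)=\BFlow(x,\afunc(x))=\Bsh{\afunc}(x)$.

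The main obstacle lies entirely in~(a): producing a single neighbourhood of $z$ on which all the compositions occurring in (b)--(d) are simultaneously defined and on which the flow law is available for the relevant bounded ranges of the time parameter. Once this bookkeeping is arranged the lemma reduces to the one-line verifications above, with no further use of the structure of the singular point $z$.
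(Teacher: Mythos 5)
Your proof is correct and follows essentially the same route as the paper: part~(a) is handled by openness of $\BDom$ together with the observation that $\{z\}\times\RRR\subset\BDom$ when $\BFld(z)=0$ (the paper phrases this via continuous dependence of ODE solutions on initial conditions, which is the same fact), and parts~(b)--(d) are the same direct verifications from the local flow law $\BFlow(\BFlow(x,s),t)=\BFlow(x,s+t)$ plus the chain-rule/inverse-function-theorem argument showing $D\Bsh{\afunc}(z)$ is invertible because $\BFld(z)=0$ kills the $D\afunc(z)$ contribution. The only difference is that the paper compresses~(b) into a citation to~\cite{Maks:TA:2003} while you spell out the computation, which does not change the method.
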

\begin{proof}
(a)~Since $\BFld(z)=0$, it follows from the standard result on dependence of solutions of ODE on initial values that for arbitrary large $A\geq0$ there exists a neighbourhood $W_{A}$ of $z$ such that $W_{A}\times[-A,A] \subset \BDom$.
Hence if $A>|\afunc(z)|$ then $\Bsh{\afunc}$ is defined on some neighbourhood of $z$ contained in $W_{A}$.
The proof for $\hBshh{\bfunc}$ is similar.

(b) is proved in~\cite[Eqs.(8),(9) and Corollary~21]{Maks:TA:2003}, see also~\ref{subsect:h_dif_at_fix}.

(c) Eq.~\eqref{equ:Bsh_hBsh_1} just means that $\hBshh{\bfunc}(x) = \BFlow(\difM(x),\bfunc\circ\difM^{-1}\circ\difM(x))$.

(d) Finally, the verification of~\eqref{equ:Bsh_hBsh_4} is direct. 
\end{proof}

Let $\BFld(z)=0$.
Then we have a well-defined \emph{local shift map at $z$}
$$
\hShift:\FuncMz\to\DiffMz,
\qquad 
\hShift(\afunc) = \BFlow_{\afunc}.
$$
Denote its image $\hShift(\FuncMz)$ in $\DiffMz$ by $\gimShBz$.

Let also $\gEBz$ be the subset of $\DiffMz$ consisting of orbit preserving germs, i.e.\! $\difM\in\DiffMz$ provided there exists a neighbourhood $\Vman$ of $z$ such that $\difM(\orb\cap\Vman)\subset \orb$ for every orbit $\orb$ of $\BFld$.

For every $r=0,1,\ldots,\infty$ let $\gEidBzr{r}$ be the ``identity component'' of $\gEBz$ with respect to $C^{r}_{W}$-topology, i.e.\! $\gEidBzr{r}$ consists of all $\difM\in\gEBz$ for which there exists a neighbourhood $\Vman\subset\Mman$ of $z$ and an $r$-isotopy $H:\Vman\times I\to\Mman$ such that $H_0=i_{\Vman}:\Vman\subset\Mman$, $H_t\in\gEBz$ for all $t\in I$, and $H_1=\difM$.

Then similarly to~\eqref{equ:inclusions_sh_concmp} we have the following inclusions:
$$ 
\gimShBz \subset \gEidBzr{\infty} \subset\cdots\subset \gEidBzr{1} \subset \gEidBzr{0}.
$$

\section{Parameter rigidity}\label{sect:param_rigidity}
In recent years there were obtained many results concerning rigidity of hyperbolic and locally free actions of certain Lie groups and their lattices, see e.g.~\cite{KatokSpatzier:PMIHES:1994, Hurder:CM:1994, Kanai:GAFA:1996, KatokSpatzier:TMIS:1997, MatsumotoMitsumatsu:ETDS:2003, Dajmanovic:LMD:2007, EinsiedlerFisher:IJM:2007} and references there.
Roughly speaking a rigidity of an action $T$ means that every action $T'$ which is sufficiently close in a proper sense to $T$ is conjugate to $T$.

For instance, in a recent paper~\cite{Santos:ETDS:2007} by N.\;dos Santos parameter rigidity of locally free actions of contractible Lie groups on closed manifolds are considered.
In the case of vector fields, i.e.\! actions of $\RRR$, local freeness means regularity of orbits.
In contrast we will consider certain classes of vector fields with singular points, i.e.\! not locally free $\RRR$-actions, and prove their parameter rigidity, see~\ref{th:param-rigid} and \ref{cor:LinVF}.

\begin{definition}\label{defn:param-rigid}{\rm(c.f.~\cite{Santos:ETDS:2007})}
We say that a vector field $\BFld$ on  a manifold $\Mman$ is {\bfseries parameter rigid} if for any vector field $\AFld$ on $\Mman$ such that every orbit of $\AFld$ is contained in some orbit of $\BFld$ there exists a $C^{\infty}$-function $\afunc$ such that  $\AFld=\afunc \BFld$.
\end{definition}

Let $\singB$ and $\singA$ be the sets of singular points of $\BFld$ and $\AFld$ respectively.
The assumption that orbits of $\AFld$ are contained in orbits of $\BFld$ implies that $\singB\subset\singA$ and that $\BFld$ and $\AFld$ are parallel on $\Mman\setminus\singA$.
Therefore there exists a $C^{\infty}$ function $\mu:\Mman\setminus\singB\to\RRR\setminus\{0\}$ such that $\AFld=\mu \BFld$ on $\Mman\setminus\singB$.
Then Definition~\ref{defn:param-rigid} requires that $\mu$ smoothly extends to all of $\Mman$ for any such $\AFld$.

\begin{lemma}[Extensions of shift functions under homotopies]\label{lm:shift-func-on-reg}
Let $\Vman \subset \Mman$ be an open subset, $\afunc_0\in\funcBV$, and $\Hmap:\Vman\times I\to\Mman$ be a $C^{\infty}$-homotopy such that $\Hmap_0=\Shift(\afunc_0)$ and $\Hmap_t\in\imShBV$ for every $t\in I$.
Then there exists a unique $C^{\infty}$ function $\SCT:(\Vman\setminus\singB)\times I\to\RRR$ such that 
\begin{equation}\label{equ:cond_sh_func}
\SCT(x,0)=\afunc_0(x), \qquad\qquad
\Hmap(x,t)=\BFlow(x,\SCT(x,t)),
\end{equation}
for all $(x,t)\in(\Vman\setminus\singB)\times I$.
Thus $\SCT$ is a shift function for $\Hmap$ on $(\Vman\setminus\singB)\times I$ which extends $\afunc_0$.
\end{lemma}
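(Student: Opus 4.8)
The plan is to build $\SCT$ out of local pieces defined via flow-box time coordinates, using compactness of $I$ to continue along the $t$-variable and the discreteness of periods to glue the pieces consistently. Fix $x\in\Vman\setminus\singB$ and put $\orb_x$ for the orbit through $x$; since $\Hmap_t\in\imShBV$, every $\Hmap(x,t)$ lies on $\orb_x$, and $\Hmap(x,0)=\BFlow(x,\afunc_0(x))$. The basic tool is the flow-box theorem: about a regular point $p$ there are coordinates $(y_1,\dots,y_n)$ on a neighbourhood $W$ in which $\BFld=\partial/\partial y_1$; I write $\pi=y_1$ for the ``time coordinate''. The key preliminary observation is a confinement property: the orbit $\orb_x$ meets the transversal $\{y_1=0\}$ in a countable set (the preimage of a codimension-one transversal under the immersion $s\mapsto\BFlow(x,s)$ is $0$-dimensional), so $\orb_x\cap W$ is a countable union of plaques $\{y'=\mathrm{const}\}$; hence any path $[a,b]\to\orb_x\cap W$ that is continuous into $\Mman$ has connected, therefore constant, transversal coordinate, i.e.\ it stays inside one plaque. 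On a single plaque $\pi$ is literally arc-time of the flow: $\BFlow(q,\pi(q')-\pi(q))=q'$ for $q,q'$ on the same plaque.

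Uniqueness of $\SCT$ in $t$ (for fixed $x$) follows from the fact that $\BFlow(x,a)=\BFlow(x,b)$ forces $a-b$ into the stabiliser of $\BFlow(x,b)$, which is a discrete subgroup of $\RRR$ (equal to $\{0\}$ or to $\theta\ZZZ$) and is the same for all points of $\orb_x$; so if $\SCT,\SCT'$ both satisfy~\eqref{equ:cond_sh_func}, then $t\mapsto\SCT(x,t)-\SCT'(x,t)$ is continuous, takes values in that discrete set and vanishes at $t=0$, hence vanishes identically. For existence I would first produce, for each fixed $x$, a function $\SCT(x,\cdot):I\to\RRR$ continuous in $t$: the set $\Hmap(x,I)$ is compact, so I cover it by finitely many flow boxes, take a subdivision $0=\tau_0<\dots<\tau_m=1$ fine enough (a Lebesgue number) that $\Hmap(x,[\tau_i,\tau_{i+1}])$ lies in a single box $W_i$, hence by confinement in a single plaque of $W_i$, and then define $\SCT(x,\cdot)$ inductively by $\SCT(x,0)=\afunc_0(x)$ and $\SCT(x,t)=\SCT(x,\tau_i)+\pi_{W_i}(\Hmap(x,t))-\pi_{W_i}(\Hmap(x,\tau_i))$ on $[\tau_i,\tau_{i+1}]$. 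The plaque property gives $\BFlow(x,\SCT(x,t))=\Hmap(x,t)$, the pieces agree at the $\tau_i$, and uniqueness shows the outcome is independent of all choices; this defines $\SCT$ on $(\Vman\setminus\singB)\times I$.

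It then remains to prove $\SCT\in C^\infty$, which is a local matter. The point is the identity $\SCT(x,t)=\SCT(x,s)+\pi(\Hmap(x,t))-\pi(\Hmap(x,s))$, valid for $(x,t)$ and $(x,s)$ in a neighbourhood of any $(x_0,t_0)$ (same plaque of a flow box at $\Hmap(x_0,t_0)$, plus the uniqueness/discreteness argument to identify the value). Its right-hand side is $C^\infty$ in $(x,t)$ as soon as $x\mapsto\SCT(x,s)$ is $C^\infty$ near $x_0$ for one $s$. So let $\Omega$ be the (automatically open) set of points of $(\Vman\setminus\singB)\times I$ near which $\SCT$ is $C^\infty$. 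Then $(\Vman\setminus\singB)\times\{0\}\subset\Omega$ because $\SCT(\cdot,0)=\afunc_0$ is smooth and the identity with $s=0$ applies; and for each fixed $x_0$ the slice $\{t:(x_0,t)\in\Omega\}$ is closed in $I$: given $t_n\to t_*$ in it, pick one $t_n$ close enough to $t_*$, note $x\mapsto\SCT(x,t_n)$ is $C^\infty$ near $x_0$, and propagate smoothness to a whole neighbourhood of $(x_0,t_*)$ via the identity with $s=t_n$. Being open, closed and nonempty, this slice is all of $I$ for every $x_0$, so $\Omega$ is everything and $\SCT$ is $C^\infty$.

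The main obstacle — and the reason the argument is organised around fixing $x$ — is that each flow-box construction is valid only on a small box whose ``size'' in the $t$-direction is not uniform over $x\in\Vman\setminus\singB$, so one cannot extend $\SCT$ ``to a later time'' for all $x$ simultaneously. Both the global-in-$t$ existence and the global smoothness are therefore obtained by a connectedness (open–closed) argument along $I$ with $x$ held fixed, the discreteness of stabilisers being exactly what forces the independently-made local choices to match up.
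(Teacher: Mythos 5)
Your argument is correct and complete. For the record, the paper itself does not prove this lemma: it cites \cite[Th.~25]{Maks:TA:2003}, where the statement is established for $\afunc_0\equiv 0$, and remarks that the same argument works for general $\afunc_0\in\funcBV$. What you have written out is, in effect, a self-contained version of the flow-box proof that the cited theorem is built on, so the two approaches coincide in substance. Your key points all check out: (i) uniqueness reduces, for fixed $x$, to the fact that the stabiliser $\{s:\BFlow(x,s)=x\}$ is a closed proper subgroup of $\RRR$ (hence $\{0\}$ or $\theta\ZZZ$), so a continuous shift difference vanishing at $t=0$ vanishes identically; (ii) your confinement observation (a continuous path $[a,b]\to\orb_x\cap W$ in a flow box stays in one plaque, because the transversal coordinate is continuous with values in a countable set) is exactly what makes the flow-box time coordinate $\pi$ usable; (iii) existence in $t$ for fixed $x$ via a Lebesgue-number subdivision of $I$ and the inductive formula $\SCT(x,t)=\SCT(x,\tau_i)+\pi_{W_i}(\Hmap(x,t))-\pi_{W_i}(\Hmap(x,\tau_i))$ is sound, with consistency across overlaps enforced again by the discreteness of periods; (iv) joint smoothness via the local identity $\SCT(x,t)=\SCT(x,s)+\pi(\Hmap(x,t))-\pi(\Hmap(x,s))$ on a product neighbourhood and the open--closed argument along $\{x_0\}\times I$ is correct, and you are right that this two-stage structure (first fix $x$, then propagate smoothness in $t$) is forced because the flow-box data has no uniform size in the $t$-direction as $x$ varies. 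The only place you leave implicit is that the inductively defined $\SCT(x,t)$ stays inside the maximal time interval $(a_x,b_x)$, so that $\BFlow(x,\SCT(x,t))$ is actually defined; this is automatic because each increment is a flow time staying within a flow box and the domain of $\BFlow$ shifts additively along an orbit, but it would be worth a sentence.
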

\begin{proof}
This statement was actually established during the proof of \cite[Th.~25]{Maks:TA:2003} for the case $\afunc_0\equiv0$.
But the same arguments show that the proof holds for any $\afunc_0\in\funcBV$.
We leave the details for the reader.
\end{proof}

\begin{definition}\label{defn:lift-smooth-paths}
Let $\Vman \subset \Mman$ be an open subset such that $\singB$ is nowhere dense in $\Vman$.
We will say that the shift map $\ShBV$ of $\BFld$ satisfies {\bfseries smooth path-lifting condition} if for every $C^{\infty}$-homotopy $\Hmap:\Vman\times I\to\Mman$ and $\afunc_0\in\funcBV$ such that $\Hmap_0=\Shift(\afunc_0)$ and $\Hmap_t\in\imShBV$, $(t\in I)$ the shift function $\SCT:(\Vman\setminus\singB)\times I\to\RRR$ of $\Hmap$ satisfying~\eqref{equ:cond_sh_func} smoothly extends to all of $\Vman\times I$.
\end{definition}

See also~\cite{Schwarz:PMIHES:1980}, where the problem of lifting smooth homotopies of orbit spaces of Lie groups is considered.

\begin{theorem}\label{th:param-rigid}
Let $\BFld$ be a vector field on a manifold $\Mman$.
Suppose that for every singular point $z\in\singB$ there exists an open neighbourhood $\Vman$ such that $\imShBV = \EidBVr{\infty}$ and
the corresponding shift map $\ShBV$ satisfies smooth path-lifting condition.
Then $\BFld$ is parameter rigid.
\end{theorem}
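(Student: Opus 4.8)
The plan is to reduce parameter rigidity of $\BFld$ to a smooth extension problem for a single function and then to solve that problem by differentiating a one-parameter family of shifts. By the discussion following Definition~\ref{defn:param-rigid} there is a smooth function $\resfunc$ on $\Mman\setminus\singB$ with $\AFld=\resfunc\,\BFld$ there, and it suffices to extend $\resfunc$ smoothly across each point of $\singB$. The set $\singB$ is closed, and, since the existence, for each $z\in\singB$, of a neighbourhood $\Vman$ on which $\ShBV$ satisfies the smooth path-lifting condition already presupposes that $\singB$ is nowhere dense in $\Vman$ (see Definition~\ref{defn:lift-smooth-paths}), the set $\singB$ has empty interior in $\Mman$. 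Hence any local smooth extensions of $\resfunc$ near the points of $\singB$ agree with $\resfunc$, and therefore with one another, on the dense set $\Mman\setminus\singB$, so they glue to a global smooth $\afunc$ with $\AFld=\afunc\,\BFld$, as required.

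Fix $z\in\singB$ and let $\Vman\ni z$ be a neighbourhood as in the hypothesis, so that $\imShBV=\EidBVr{\infty}$ and $\ShBV$ satisfies the smooth path-lifting condition. Since the flow of $\AFld$ need not be defined on all of $\Vman\times I$, I would first cut it off: choose a smooth $\buffunc:\Mman\to[0,1]$ equal to $1$ on some open ball $\anbh\ni z$ with $\overline{\anbh}\subset\Vman$ and with compact support contained in $\Vman$. The field $\buffunc\AFld$ is still everywhere parallel to $\BFld$, hence its orbits still lie in orbits of $\BFld$; being compactly supported it is complete, with a global flow $(\widetilde{\AFlow}_t)$. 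As $z\in\singB\subset\singA$, the field $\buffunc\AFld$ vanishes on $\singB$, so each $\widetilde{\AFlow}_t$ fixes $\singB$ pointwise, is a diffeomorphism of $\Mman$, and carries every orbit of $\BFld$ into itself; therefore $\widetilde{\AFlow}_t|_{\Vman}\in\EBV$. Since $s\mapsto\widetilde{\AFlow}_s|_{\Vman}$ is a $C^{\infty}$- (hence $\infty$-) isotopy in $\EBV$ starting at $i_{\Vman}$, we get $\widetilde{\AFlow}_t|_{\Vman}\in\EidBVr{\infty}=\imShBV$ for every $t\in I$.

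Now I would apply the machinery of this section to the $C^{\infty}$-homotopy $\Hmap:\Vman\times I\to\Mman$, $\Hmap(x,t)=\widetilde{\AFlow}(x,t)$. Here $\Hmap_0=i_{\Vman}=\Shift(\mathbf{0})$ with $\mathbf{0}\in\funcBV$, and $\Hmap_t\in\imShBV$ for all $t$; so Lemma~\ref{lm:shift-func-on-reg} yields a unique smooth $\SCT:(\Vman\setminus\singB)\times I\to\RRR$ with $\SCT(\cdot,0)\equiv0$ and $\widetilde{\AFlow}(x,t)=\BFlow(x,\SCT(x,t))$, and the smooth path-lifting condition extends $\SCT$ smoothly to all of $\Vman\times I$ (still with $\SCT(\cdot,0)\equiv0$, by continuity). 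Since $\singB$ is nowhere dense and both sides are continuous — the graph of each $\SCT(\cdot,t)$ lying in $\BDom$ after shrinking $\Vman$, which is possible because $\BFld(z)=0$, by Lemma~\ref{lm:shift-types}(a) — the identity $\widetilde{\AFlow}(x,t)=\BFlow(x,\SCT(x,t))$ holds on all of $\Vman\times I$. Differentiating it at $t=0$, using $\SCT(\cdot,0)\equiv0$, $\partial_s\BFlow(x,0)=\BFld(x)$ and $\partial_t\widetilde{\AFlow}(x,0)=(\buffunc\AFld)(x)$, gives $\buffunc(x)\,\AFld(x)=\bigl(\partial_t\SCT(x,0)\bigr)\BFld(x)$ for $x\in\Vman$. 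Restricting to $\anbh$, where $\buffunc\equiv1$, this reads $\AFld=\bigl(\partial_t\SCT(\cdot,0)\bigr)\BFld$ with smooth coefficient, so $\afunc_z:=\partial_t\SCT(\cdot,0)\big|_{\anbh}$ is the desired smooth extension of $\resfunc$ near $z$; gluing over $z\in\singB$ finishes the proof.

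I expect the main obstacle to be technical rather than conceptual: securing a complete flow for $\AFld$ near $z$ (settled by the cut-off $\buffunc$) and, more importantly, propagating the identity $\widetilde{\AFlow}=\BFlow\circ(\id,\SCT)$ from the regular locus to the whole of $\Vman$ while controlling the domain of $\BFlow$ near the singular point — this is exactly where nowhere-density of $\singB$ enters. Once $\SCT$ is known to be smooth up to $\singB$, the conclusion drops out of a single $t$-derivative.
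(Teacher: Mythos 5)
Your proof is correct and follows essentially the same strategy as the paper's: identify $\resfunc$ with $\AFld=\resfunc\BFld$ off $\singB$; near a singular point $z$, realize the flow of (a complete modification of) $\AFld$ as a smooth family in $\imShBV=\EidBVr{\infty}$; invoke the smooth path-lifting condition to obtain a smooth $\SCT$ on $\Vman\times I$ with $\widetilde\AFlow(x,t)=\BFlow(x,\SCT(x,t))$ and $\SCT(\cdot,0)=0$; and differentiate at $t=0$ to recover the coefficient. The only variation is cosmetic: the paper secures completeness globally by rescaling $\AFld$ with a strictly positive function (citing Hart) and never needs a bump function, whereas you cut off locally with $\buffunc$ and then restrict to $\anbh$ where $\buffunc\equiv1$; both devices are standard and lead to the same $t$-derivative computation.
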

\begin{proof}
Let $\AFld$ be a vector field on $\Mman$ such that every orbit of $\AFld$ is contained in some orbit of $\BFld$.
Then there exists a smooth function $\mu:\Mman\setminus\singB\to\RRR$ such that $\AFld=\mu\BFld$.
We have to show that $\mu$ smoothly extends to all of $\Mman$.

We can assume that $\AFld$ generates a global flow $\AFlow:\Mman\times\RRR\to\Mman$.
Otherwise, there exists a smooth function $\bfunc:\Mman\to(0,\infty)$ such that the vector field $\AFld'=\bfunc\AFld$ generates a global flow, see e.g.~\cite[Corollary~2]{Hart:Top:1983}.
Then $\AFld$ and $\AFld'$ have the same orbit foliation.
Moreover, if $\AFld'=\cfunc\BFld$ for some smooth function $\cfunc:\Mman\to\RRR$, then $\AFld=\tfrac{\cfunc}{\bfunc}\,\BFld$, where $\tfrac{\cfunc}{\bfunc}$ is smooth on all of $\Mman$ as well.

Let $z\in\singB$ and $\Vman$ be a neighbourhood of $z$ such that $\ShBV$ satisfies smooth path-lifting condition.
Then for each $t\in\RRR$ we have a well-defined embedding $\AFlow_t|_{\Vman}:\Vman\to\Mman$ belonging to $\EBV$.
Moreover, since $\AFlow_0=\id_{\Mman}=\Shift(0)$ and $\AFlow$ is $C^{\infty}$, it follows that
$$\AFlow_t|_{\Vman} \in \EidBVr{\infty}=\imShBV.$$
Then by smooth path-lifting condition for $\ShBV$ there exists a smooth function $\bar\mu:\Vman\times\RRR$ such that 
\begin{equation}\label{equ:A_Bs}
 \AFlow(x,t) = \BFlow(x,\bar\mu(x,t))
\end{equation}
and $\bar\mu(x,0)=0$ for all $x\in\Vman$.
Let us differentiate both parts of~\eqref{equ:A_Bs} in $t$ and set $t=0$.
Then we will get
$$
\AFld(x) = \frac{\partial \AFlow}{\partial t}(x,0) = 
\frac{\partial \BFlow}{\partial t}(x,\bar\mu(x,0)) \cdot \bar\mu'_{t}(x,0) = 
\BFld(x)\cdot \bar\mu'_{t}(x,0).
$$
Hence $\mu \equiv \bar\mu'_{t}(x,0)$.
Since $\singB$ is nowhere dense in $\Vman$, we obtain that $\mu$ smoothly extends to all of $\Vman$.
Applying these arguments to all $z\in\singB$ we will get that $\mu$ is smooth on all of $\Mman$.
\end{proof}

As an application of this theorem and results of~\cite{Maks:TA:2003} we will now obtain parameter rigidity of linear vector fields and their regular extensions.

\begin{definition}
Let $\Mman, \Nman$ be two manifolds, $\AFld:\Mman\to T\Mman$ be a vector field of $\Mman$ and 
$$\BFld:\Mman\times\Nman\to T(\Mman\times\Nman) = T\Mman\times T\Nman$$ 
be a vector field of $\Mman\times\Nman$ regarded as sections of the corresponding tangent bundles.
Say that $\BFld$ is a {\bfseries regular extension} of $\AFld$ provided that
$$
\BFld(x,y) = (\AFld(x), H(x,y)), \qquad (x,y)\in \Mman\times \Nman,
$$
for some smooth map $H:\Mman\times\Nman\to T\Nman$ such that $H(x,y) \in T_{y}\Nman$.
In other words the ``first'' coordinate function of $\BFld$ ``coincides with $\AFld$'' and does not depend on $y\in\Nman$.
\end{definition}

For instance if $\AFld_i$ is a vector field on a manifold $\Mman_i$, $(i=1,2)$, then their product $\BFld(x,y)=(\AFld_1(x),\AFld_2(y))$ on $\Mman_1\times\Mman_2$ is a regular extension of either of $\AFld_i$.
Every linear vector field $\BFld(x)=Ax$ on $\RRR^n$ is a product of linear vector fields generated by Jordan cells of real Jordan form of $A$.
Moreover, every Jordan cell vector field is a regular extension of a linear vector fields defined by the one of the following matrices:
$(\lambda)$, $\left(\begin{smallmatrix} 0 & 0 \\ 1 & 0 \end{smallmatrix}\right)$,
$\left(\begin{smallmatrix} a & -b \\ b & a \end{smallmatrix}\right)$, where either $\lambda\not=0$ or $b\not=0$, see also~\cite{Venti:JDE:1966}.

\begin{corollary}\label{cor:LinVF}
Let $\BFld$ be a vector field on a manifold $\Mman$ and $\Vman\subset\Mman$ be an open subset.
Suppose that {\bfseries the restriction of $\BFld$ to $\Vman$ is a regular extension of some non-zero linear vector field}.
This means that there exist a non-zero $(m\times m)$-matrix $A$, a smooth manifold $\Nman$, and a diffeomorphism
$\eta:\Vman\to\RRR^{m}\times\Nman$
such that the induced vector field $\eta^{*}\BFld$ on $\RRR^{k}\times\Nman$ is a regular extension of the linear vector field $\AFld(y)=Ay$ on $\RRR^{m}$.
Then $\imShBV=\EidBVr{0}$ and the shift map $\ShBV$ satisfies smooth path-lifting condition.

Hence if every $z\in\singB$ has a neighbourhood $\Vman$ with the above property, then $\BFld$ is parameter rigid.
\end{corollary}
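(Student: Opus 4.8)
The plan is to derive the corollary from Theorem~\ref{th:param-rigid} together with the (corrected) main results of~\cite{Maks:TA:2003}. Fix a singular point $z\in\singB$ and a neighbourhood $\Vman$ with the stated regular–extension property. The last sentence of the corollary is a formal consequence of the first: once $\imShBV=\EidBVr{0}$, the chain of inclusions~\eqref{equ:inclusions_sh_concmp} collapses, so in particular $\imShBV=\EidBVr{\infty}$, and together with the smooth path–lifting condition this is exactly what Theorem~\ref{th:param-rigid} requires in order to conclude that $\BFld$ is parameter rigid. So the real content is to prove, for such a $\Vman$, that $\imShBV=\EidBVr{0}$ and that $\ShBV$ satisfies the smooth path–lifting condition.

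First I would reduce to the model situation by naturality under the diffeomorphism $\eta\colon\Vman\to\RRR^{m}\times\Nman$. Conjugation by $\eta$ carries the local flow $\BFlow$ of $\BFld|_{\Vman}$ to the local flow of $\eta^{*}\BFld$, it is a bijection $\funcBV\to\mathsf{func}(\eta^{*}\BFld,\RRR^{m}\times\Nman)$ given by $\afunc\mapsto\afunc\circ\eta^{-1}$ which intertwines the two shift maps, and it induces homeomorphisms of the corresponding spaces $\EBV$ in every $C^{r}_{W}$–topology that fix the identity embedding; hence it identifies the path components $\EidBVr{r}$, carries $r$–isotopies to $r$–isotopies, and preserves smoothness in the sense of Definition~\ref{defn:pres_smoothness}. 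Therefore both assertions hold for $\BFld|_{\Vman}$ iff they hold for $\eta^{*}\BFld$, and I may assume $\Vman=\RRR^{m}\times\Nman$ with $\BFld$ a regular extension of the non–zero linear vector field $\AFld(y)=Ay$ on $\RRR^{m}$.

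Next I would invoke~\cite{Maks:TA:2003}, in the corrected form indicated in the footnote on page~\pageref{foot:1}. For a regular extension the orbits of $\BFld$ project along the first factor onto the orbits of $\AFld$, with identical flow times, so orbit–preserving embeddings, orbit–preserving $r$–isotopies and their shift functions for $\BFld$ translate into the corresponding data for the linear field $\AFld$, to which~\cite{Maks:TA:2003} applies. That yields, first, that every orbit–preserving diffeomorphism which is $0$–homotopic to the identity through orbit–preserving embeddings admits a $C^{\infty}$ shift function; restated locally this says $\EidBVr{0}\subset\imShBV$, hence equality. It yields, second, that the assignment $\difM\mapsto\afunc_{\difM}$ is $C^{r}_{W}$–continuous and preserves smoothness. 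To deduce the path–lifting condition, take a $C^{\infty}$–homotopy $\Hmap$ and $\afunc_{0}\in\funcBV$ as in Definition~\ref{defn:lift-smooth-paths}: Lemma~\ref{lm:shift-func-on-reg} supplies the unique smooth $\SCT$ on $(\Vman\setminus\singB)\times I$ satisfying~\eqref{equ:cond_sh_func}, while each $\Hmap_{t}$ is $C^{\infty}$–isotopic to the identity in $\EBV$ (follow $s\mapsto\Shift(s\afunc_{0})$ from $\id$ to $\Hmap_{0}$ and then $\Hmap$); applying the smoothness of $\difM\mapsto\afunc_{\difM}$ to the smooth family $t\mapsto\Hmap_{t}$ and normalizing at $t=0$ by $\afunc_{0}$ produces a smooth function on all of $\Vman\times I$ which, by the uniqueness in Lemma~\ref{lm:shift-func-on-reg}, restricts to $\SCT$ on $(\Vman\setminus\singB)\times I$. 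Thus $\SCT$ extends smoothly, $\ShBV$ satisfies the smooth path–lifting condition, and Theorem~\ref{th:param-rigid} finishes the proof.

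I expect the third step to be the crux. The genuinely delicate point is transporting the quantitative continuity and smoothness–preservation statements of~\cite{Maks:TA:2003} — especially in their corrected form, where the relevant maps are only $C^{\infty}_{W}$–continuous rather than enjoying a bounded loss of derivatives — through the regular–extension translation and checking that the resulting shift function is truly $C^{\infty}$ across the singular set $\singB$ (and jointly $C^{\infty}$ in the homotopy parameter, if that is wanted). By contrast, the $\eta$–reduction of the second paragraph and the final bookkeeping via~\eqref{equ:inclusions_sh_concmp} are routine.
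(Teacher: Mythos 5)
Your plan is essentially the one the paper takes: the paper's entire proof is the single citation ``\cite[Theorem~25 \& Theorem~27, statement about (E)-point]{Maks:TA:2003}'', and your argument is an unpacking of what those two results deliver, plus the routine $\eta$-conjugation and the collapse of \eqref{equ:inclusions_sh_concmp} to feed Theorem~\ref{th:param-rigid}. Those parts are fine. The one place where you diverge from the machinery the paper sets up is the path-lifting step: you re-derive the implication ``preserving-smoothness assignment $\difM\mapsto\afunc_{\difM}$ plus uniqueness on $\Vman\setminus\singB$ implies smooth path-lifting'' by hand, whereas the paper has Lemma~\ref{lm:psm_splc} precisely for this purpose. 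Your version glosses over two issues that Lemma~\ref{lm:psm_splc} treats carefully: (i) the correspondence coming from~\cite{Maks:TA:2003} is a priori only a \emph{local} section, defined on a $C^{\infty}_{W}$-neighbourhood of each $\difM$ rather than on all of $\imShBV$, so one must cover the compact path $t\mapsto\Hmap_t$ by finitely many such neighbourhoods and patch the resulting lifts (this is the inductive step in the proof of Lemma~\ref{lm:psm_splc}); and (ii) ``normalizing at $t=0$ by $\afunc_0$'' requires an explicit correction by an integer multiple of the period function $\nu$ (remark {\bf R2} in that proof), since the value of the section at $\Hmap_0$ need not equal $\afunc_0$. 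Neither is fatal, but as written your argument silently assumes a globally defined, smoothness-preserving section, which is not what the cited results supply. Replacing the third step with an appeal to Lemma~\ref{lm:psm_splc}, after noting that~\cite{Maks:TA:2003} furnishes the required local preserving-smoothness sections, would close both gaps and aligns with the intended route.
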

\begin{proof}
The proof follows from~\cite[Theorem~25 \& Theorem~27, statement about (E)-point]{Maks:TA:2003}.
\end{proof}

In~\cite{Siegel:1952,Sternberg:AJM:1957,Venti:JDE:1966} and others there were obtained sufficient conditions for a vector field $\BFld$ defined in a neighbourhood of $\orig\in\RRR^n$ to be linear in some local coordinates at $\orig$.
These results together with Corollary~\ref{cor:LinVF} imply parameter rigidity for a large class of ``hyperbolic'' flows which is in the spirit of mentioned above results of~\cite{KatokSpatzier:PMIHES:1994, Hurder:CM:1994, Kanai:GAFA:1996, KatokSpatzier:TMIS:1997, MatsumotoMitsumatsu:ETDS:2003,Dajmanovic:LMD:2007, EinsiedlerFisher:IJM:2007, Santos:ETDS:2007} concerning rigidity of hyperbolic locally free actions of Lie groups.
A new feature of Corollary~\ref{cor:LinVF} is that $\BFld$ has singularity, and therefore the corresponding $\RRR$-action (i.e. the flow of $\BFld$) is not locally free.

The following lemma gives sufficient condition for a vector field to satisfy smooth path-lifting condition.

\begin{lemma}\label{lm:psm_splc}
Suppose that $\singB$ is nowhere dense in $\Vman$ and for every $\difM\in\imShBV$ there exists a $C^{\infty}_{W}$-neighbourhood $\NN$ in $\imShBV$ and a preserving smoothness map (not necessarily continuous in any sense)
$$
\RS:
\imShBV \;\supset\; \NN \longrightarrow 
\funcBV \;\subset\;C^{\infty}(\Vman,\RRR),
$$
such that $\gdifM(x)=\BFlow(x,\RS(\gdifM)(x))$ for all $\gdifM\in\NN$.
In other words, $\RS$ is a {\bfseries section\/} of $\ShBV$, i.e. $\ShBV\circ\RS=\id(\NN)$.
Then $\ShBV$ satisfies smooth path-lifting condition.
\end{lemma}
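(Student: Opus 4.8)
The plan is to show that the hypothesis of Lemma~\ref{lm:psm_splc} lets us upgrade the \emph{germ-wise} shift function $\SCT$ produced by Lemma~\ref{lm:shift-func-on-reg} to a genuinely smooth function on $\Vman\times I$, and that it does so not just pointwise in $t$ but smoothly in $t$ as well. Fix a $C^{\infty}$-homotopy $\Hmap:\Vman\times I\to\Mman$ and $\afunc_0\in\funcBV$ with $\Hmap_0=\Shift(\afunc_0)$ and $\Hmap_t\in\imShBV$ for all $t\in I$. By Lemma~\ref{lm:shift-func-on-reg} there is a unique smooth $\SCT:(\Vman\setminus\singB)\times I\to\RRR$ satisfying~\eqref{equ:cond_sh_func}; since $\singB$ is nowhere dense in $\Vman$, what we must prove is that $\SCT$ extends smoothly across $\singB\times I$.

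First I would localise in $t$. For a fixed $t_0\in I$, apply the hypothesis to $\difM=\Hmap_{t_0}\in\imShBV$: there is a $C^{\infty}_{W}$-neighbourhood $\NN$ of $\Hmap_{t_0}$ in $\imShBV$ and a preserving-smoothness section $\RS:\NN\to\funcBV$ of $\ShBV$. Because $t\mapsto\Hmap_t$ is a continuous path into the $C^{\infty}_{W}$-topology of $C^{\infty}(\Vman,\Mman)$ (indeed $\Hmap$ is a $C^\infty$-homotopy), there is an interval $J=(t_0-\eps,t_0+\eps)\cap I$ with $\Hmap_t\in\NN$ for all $t\in J$. Now consider the restricted homotopy $\Hmap|_{\Vman\times J}$, regarded as a smooth map $\Vman\times J\to\Mman$ all of whose ``slices'' lie in $\NN$. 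Since $\RS$ preserves smoothness (Definition~\ref{defn:pres_smoothness}), the map
\[
\RS(\Hmap|_{\Vman\times J}):\Vman\times J\to\RRR,
\qquad
(x,t)\mapsto \RS(\Hmap_t)(x),
\]
is $C^{\infty}$, and by the section property $\ShBV\circ\RS=\id$ it satisfies $\Hmap(x,t)=\BFlow\bigl(x,\RS(\Hmap_t)(x)\bigr)$ for all $(x,t)\in\Vman\times J$.

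It remains to match this smooth local candidate with the given $\SCT$ on the regular set, so that the extensions glue to a single smooth function. On $(\Vman\setminus\singB)\times J$ the two functions $\SCT(\cdot,\cdot)$ and $\RS(\Hmap_{\cdot})(\cdot)$ are both smooth shift functions for the same homotopy; by the uniqueness part of Lemma~\ref{lm:shift-func-on-reg} --- more precisely, by local injectivity of $\ShBV$ on $\funcBV$ (cited after~\eqref{equ:glob_shift_map}, valid because $\singB$ is nowhere dense) together with connectedness of $J$ and agreement at a suitable base point --- they differ on each orbit-component of the regular set only by a locally constant integer multiple of the period. After possibly shrinking $\NN$ (hence $J$) so that this ambiguity is killed, and using that $\SCT(\cdot,0)=\afunc_0$ pins down the starting value, we get $\SCT=\RS(\Hmap_{\cdot})$ on $(\Vman\setminus\singB)\times J$. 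Therefore the smooth function $\RS(\Hmap_{\cdot})$ is the desired extension of $\SCT|_{(\Vman\setminus\singB)\times J}$ across $(\singB\cap\Vman)\times J$. Covering $I$ by finitely many such intervals $J$ and invoking uniqueness on overlaps, the local extensions agree where they overlap and assemble to a global smooth extension of $\SCT$ to $\Vman\times I$. Hence $\ShBV$ satisfies the smooth path-lifting condition.

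The main obstacle I anticipate is the reconciliation step: the section $\RS$ is only required to be a set-theoretic section with no continuity, so a priori $\RS(\Hmap_t)$ chosen for different base points $t_0$ need bear no relation to each other, and even within one $J$ it need not a priori restrict to the particular branch of $\SCT$ coming from Lemma~\ref{lm:shift-func-on-reg}. The resolution is to exploit that on the regular set any two shift functions for a given map differ by a locally constant multiple of the period, so the difference $\SCT-\RS(\Hmap_{\cdot})$ is locally constant in a discrete group; continuity in $t$ (from $\Hmap$ being a homotopy and $\SCT$ being continuous there) then forces this discrete quantity to be constant along $J$, and the initial condition at $t=0$ fixes it to zero on the component of the base point --- this is exactly the kind of argument already used in \cite[Th.~25]{Maks:TA:2003} and alluded to in Lemma~\ref{lm:shift-func-on-reg}, so the details are routine once set up correctly.
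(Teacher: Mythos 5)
Your overall strategy mirrors the paper's: localize in $t$, use the preserving-smoothness sections to get smooth local candidates, and glue them using uniqueness of shift functions on the regular set. This is essentially the proof given in the paper, which organizes the same ingredients into two explicit remarks ({\bf R1}: two preserving-smoothness sections agreeing at the base map produce the same smooth function; {\bf R2}: any preserving-smoothness section can be modified to take a prescribed value at the base map) before doing the gluing over a finite subdivision $0=t_0<\dots<t_n=1$.

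However, there is a concrete gap at the reconciliation step. You write ``After possibly shrinking $\NN$ (hence $J$) so that this ambiguity is killed, and using that $\SCT(\cdot,0)=\afunc_0$ pins down the starting value, we get $\SCT=\RS(\Hmap_\cdot)$.'' Shrinking $\NN$ does not help here: the section $\RS$ given by the hypothesis has a fixed value $\RS(\Hmap_{t_0})$, and there is no reason it should equal the shift function $\SCT(\cdot,t_0)$ you are trying to match --- the discrepancy is a fixed integer multiple of the period function $\nu$, and it does not disappear by restricting the domain of $\RS$. The correct move (this is the paper's {\bf R2}) is to \emph{replace} $\RS$ by a new preserving-smoothness section $\RS' = \RS + n\nu$, where $\nu:\Vman\to(0,+\infty)$ is the smooth period function (which exists because $\singB$ is nowhere dense, by the cited results of~\cite{Maks:TA:2003}), choosing $n\in\ZZZ$ so that $\RS'(\Hmap_{t_0})$ equals the prescribed shift function at the base point of each subinterval. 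You do gesture at the underlying mechanism (``differ by a locally constant integer multiple of the period''), but without explicitly constructing the adjusted section, the gluing across subintervals does not actually go through: the local smooth candidates $\RS(\Hmap_\cdot)$ obtained from different base points $t_0$ may simply disagree on overlaps, and nothing in the proof as written forces them to agree. Making {\bf R2} explicit, and then choosing the base-point values inductively as in the paper (so that $\RS_{\afunc_0}(\Hmap_{t_1})=\RS_{\afunc_{t_1}}(\Hmap_{t_1})$ etc.), closes the gap.
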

\begin{proof}
Before proving this lemma let us make two remarks.

{\bf R1.}~Let $\difM\in\imShBV$ and $\RS_1,\RS_2:\NN\to\funcBV$ be two preserving smoothness sections of $\ShBV$ defined on some $C^{\infty}_{W}$-neighbourhood $\NN$ of $\difM$, and $\Hmap:\Vman\times I \to \Mman$ be a $C^{\infty}$ map such that $\Hmap_0=\difM$ and $\Hmap_t\in\NN$ for all $t\in I$.
Since $\RS_i$ preserves smoothness we have that the following function $\SCT_i:\Vman\times I \to \RRR$ given by
$$
\SCT_i(x,t)=\RS_i(\Hmap_t)(x)
$$
is $C^{\infty}$ as well as $\Hmap$.
\emph{If $\RS_1(\difM)=\RS_2(\difM)$, i.e. $\SCT_1(\cdot,0)=\SCT_2(\cdot,0)$, then $\SCT_1\equiv\SCT_2$ on all of $\Vman\times I$.}

Indeed, since $\SCT_1(\cdot,0)=\SCT_2(\cdot,0)$, it follows from Lemma~\ref{lm:shift-func-on-reg} that $\SCT_1(x,t)=\SCT_2(x,t)$ for all $(x,t)\in(\Vman\setminus\singB)\times I$.
But $\singB$ is nowhere dense in $\Vman$ and each $\SCT_i$ is continuous.
Therefore $\SCT_1=\SCT_2$ on all of $\Vman\times I$.

{\bf R2.}~\emph{Let $\RS:\NN\to\funcBV$ be a preserving smoothness section of $\ShBV$ defined on some neighbourhood $\NN$ of $\difM$ and $\afunc\in\ShBV^{-1}(\difM)$ be any shift function for $\difM$.
Notice that $\RS(\difM)\in\ShBV^{-1}(\difM)$ as well.
Then there exists (possibly) another preserving smoothness section $\RS':\NN\to\funcBV$ such that $\RS'(\difM)=\afunc$.}

Suppose that $\RS(\difM)\not=\afunc$.
Then $\ShBV$ is not injective map.
Since $\singB$ is nowhere dense, it follows from \cite[Lm.~5 \& Th.~12(2)]{Maks:TA:2003} that there exists a smooth function $\nu:\Vman\to(0,+\infty)$ such that $\BFlow(x,\nu(x))\equiv x$ for all $x\in\Vman$, and 
$\afunc=\RS(\difM)+n\nu$ for some $n\in\ZZZ$.
Define the following map $\RS':\NN\to\funcBV$ by $\RS'(\gdifM)=\RS(\gdifM)+ n\nu$ for $\gdifM\in\NN$.
Then $\RS'$ is also a preserving smoothness section of $\ShBV$ and $\RS'(\difM)=\RS(\difM)+ n\nu=\afunc$.

\medskip 

Now we are ready to complete Lemma~\ref{lm:psm_splc}.
Let $\afunc_0\in\funcBV$ and $\Hmap:\Vman\times I\to\Mman$ be a $C^{\infty}$ map such that $\Hmap_0=\Shift(\afunc_0)$ and $\Hmap_t\in\imShBV$ for all $t\in I$, i.e. $\Hmap_t=\ShBV(\afunc_t)$ for some (not necessarily unique) $\afunc_t\in\funcBV$.

We will show that under assumptions of lemma it is possible to choose $\afunc_t$ so that the correspondence $(x,t)\mapsto\afunc_t(x)$ becomes a $C^{\infty}$ shift function $\SCT':\Vman\times I\to\RRR$ for $\Hmap$ such that $\SCT'(x,0)=\afunc_0(x)$.
Let also $\SCT:(\Vman\setminus \singB)\times I\to \RRR$ be a unique $C^{\infty}$ shift function for $\Hmap$ such that $\SCT(x,0)=\afunc_0(x)=\SCT'(x,0)$, see Lemma~\ref{lm:shift-func-on-reg}.
Then it will follow from uniqueness of such shift function that $\SCT=\SCT'$ on $(\Vman\setminus\singB)\times I$.
Since $\singB$ is nowhere dense in $\Vman$, we will get $\SCT\equiv\SCT'$ on all of $\Vman\times I$.
This will imply smooth path-lifting condition for $\ShBV$.

Notice that $\Hmap$ can be regarded as a continuous path into $C^{\infty}_{W}$ topology of $\imShBV$.
Since the image of this path is compact, there exist finitely many points $0=t_0<t_1<\cdots<t_n=1$ and for each $k=0,1,\ldots,n-1$ a $C^{\infty}$ neighbourhood $\NN_{k}$ of $\Hmap_{t_k}$ in $\imShBV$ such that 
\begin{itemize}
\item
$\Hmap_{t}\in\NN_{k}$ for all $t\in[t_{k},t_{k+1}]$, 
\item
and for every $\afunc\in\funcBV$ such that $\ShBV(\afunc)=\Hmap_{t_k}$ there exists a preserving smoothness section $\RS_{\afunc}:\NN_{k}\to\funcBV$ of $\ShBV$ such that $\RS_{\afunc}(\Hmap_{t_k})=\afunc$, (this follows from {\bf R2}).
\end{itemize}

Since $\Hmap_0=\ShBV(\afunc_0)$, the map $\RS_{\afunc_0}$ is well defined and we put 
$$
\SCT(x,t) = \RS_{\afunc_0}(\Hmap_t)(x),
\qquad (x,t)\in\Vman\times[0,t_1].
$$
Then $\SCT$ is smooth on $\Vman\times[0,t_1]$.

Denote $\afunc_{t_1}=\RS_{\afunc_0}(\Hmap_{t_1})$.
Then $\Hmap_{t_1}=\ShBV(\afunc_{t_1})$ and $\RS_{\afunc_1}$ is well defined.
Therefore we also put
$$
\SCT(x,t) = \RS_{\afunc_1}(\Hmap_t)(x),
\qquad (x,t)\in\Vman\times[t_1,t_2].
$$
Then $\SCT$ is smooth on $\Vman\times[t_1,t_2]$.
Moreover 
$\RS_{\afunc_0}$ and $\RS_{\afunc_1}$ are two sections of $\ShBV$ such that 
$\RS_{\afunc_0}(\Hmap_{t_1})=\RS_{\afunc_1}(\Hmap_{t_1})=\afunc_{t_1}$.
Therefore by {\bf R1} $\RS_{\afunc_0}(\Hmap_{t})=\RS_{\afunc_1}(\Hmap_{t}) = \SCT(\cdot,t)$ for all $t$ sufficiently close to $t_1$.
Since $\RS_{\afunc_0}$ and $\RS_{\afunc_1}$ preserve smoothness, it follows that $\SCT$ is smooth on all of $[0,t_2]$.

Using induction on $n$ we can smoothly extend $\SCT$ on all of $\Vman\times I$.
\end{proof}

\section{$\infty$-jets of shifts}\label{sect:charact_inf_jets}
Let $\BFld$ be a smooth vector field near the origin $\orig\in\RRR^n$ such that $\BFld(\orig)=0$ and  $(\BFlow_{t})$ be the local flow of $\BFld$.
Define the following map
$$j^{\infty}:\DiffRno\to (\RRR[[x_1,\ldots,x_n]])^{n}$$ associating to every $\difM\in\DiffRno$ its $\infty$-jet at $\orig$.
Let also 
\begin{equation}\label{equ:gJShBO}
\gJShBO = (\jo{\infty})^{-1}\left[ \jo{\infty}\left(\,\gimShBO\, \right) \right].
\end{equation}
Thus $\gJShBO$ is the subgroup of $\DiffRno$ consisting of germs $\difM$ for which there exists a smooth function $\afunc_{\difM}\in\FuncRno$ such that $\jo{\infty}(\difM)=\jo{\infty}(\Bsh{\afunc_{\difM}})$.

Evidently, $\gimShBO\subset\gJShBO$.
Our first result gives necessary and sufficient conditions for a subgroup $\grp\subset\DiffRno$ containing $\gimShBO$ to be included into $\gJShBO$.

\begin{theorem}\label{th:grp_subset_JSh}
Suppose that $\BFld$ is not flat at $\orig$, i.e.\! there exists $\dg\geq1$ such that $\jo{\dg-1}(\BFld)=0$ and 
$\PFunc = \jo{\dg}(\BFld):\RRR^n\to\RRR^n$
is a non-zero homogeneous map of degree $\dg$.
For $\dg=1$ we will write $\PFunc(x) = \Lmatr\cdot x$, where $\Lmatr$ is a certain non-zero $(n\times n)$-matrix.

Let $\grp$ be a subgroup of $\DiffRno$ having the following properties:
\begin{large}\end{large}\begin{enumerate}
 \item[\coA] 
$\gimShBO \subset \grp$.
 \item[\coB]
For every $\difM\in\grp$ there exists $\omega_0\in\RRR$ such that 
$$\jo{p}(\difM)(x) = \jo{p}(\Bsh{\omega_0})(x)=
\left\{
\begin{array}{ll}
e^{\Lmatr\, \omega_0} \cdot x, & \dg=1,  \\ [1.5mm]
x + \PFunc(x)\cdot \omega_0, & \dg\geq2.
\end{array}
\right.$$
 \item[\coC]
Moreover, if $\jo{k-1}(\difM) = \jo{k-1}(\id)$ for some $k\geq\dg$, then there exists a unique homogeneous polynomial $\omega_{l}$ of degree $l=k-\dg$ such that 
$$\jo{k}(\difM)(x) = \jo{k}(\Bsh{\omega_l})(x) = x + \PFunc(x) \cdot \omega_{l}(x).$$
\end{enumerate}
Then $\jo{\infty}\bigl(\gimShBO\bigr)=\jo{\infty}(\grp)$.
In other words, $\gimShBO \subset \grp \subset \gJShBO$.
\end{theorem}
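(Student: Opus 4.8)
Since $\gJShBO=(\jo{\infty})^{-1}\bigl[\jo{\infty}(\gimShBO)\bigr]$, we have $\jo{\infty}(\gJShBO)=\jo{\infty}(\gimShBO)$, so in view of \coA\ and the evident inclusion $\gimShBO\subset\gJShBO$ it is enough to prove $\grp\subset\gJShBO$: the resulting chain $\gimShBO\subset\grp\subset\gJShBO$ then yields $\jo{\infty}(\gimShBO)=\jo{\infty}(\grp)=\jo{\infty}(\gJShBO)$. So the plan is to fix $\difM\in\grp$ and construct a smooth germ $\afunc\in\FuncRno$ with $\jo{\infty}(\Bsh{\afunc})=\jo{\infty}(\difM)$, by building the homogeneous components of the Taylor series of $\afunc$ one degree at a time — using \coB\ for the lowest component and \coC\ for all the rest — and then realizing that formal series by an honest smooth germ via Borel's theorem (Theorem~\ref{th:Borel}).

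The elementary fact underlying the construction, which I would check first directly from $\BFlow(x,t)=x+t\BFld(x)+\tfrac{t^{2}}{2}D\BFld(x)\BFld(x)+\cdots$ together with the vanishing $\jo{\dg-1}(\BFld)=0$, is the following. Writing a smooth germ at $\orig$ as $\afunc=\afunc_0+\afunc_1+\cdots$ with $\afunc_j$ homogeneous of degree $j$ (and $\afunc_0\in\RRR$), \emph{the jet $\jo{k}(\Bsh{\afunc})$ depends only on $\afunc_0,\dots,\afunc_{k-\dg}$}; equivalently, altering $\afunc$ by a term of degree $>k-\dg$ leaves $\jo{k}(\Bsh{\afunc})$ unchanged. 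Indeed $\Bsh{\afunc}(x)-x$ has order $\ge\dg$, and in the Taylor expansion of $t\mapsto\BFlow(x,t)$ every coefficient $X^{(j)}$ (with $X^{(1)}=\BFld$, $X^{(j+1)}=DX^{(j)}\cdot\BFld$) has order $\ge\dg$ for $j\ge1$, so raising the order of $\afunc$ raises by at least $\dg$ the order of the induced change in $\Bsh{\afunc}$.

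Next I would construct $\afunc_0,\afunc_1,\dots$ by induction on $l$ so that $\jo{\dg+l}(\Bsh{\afunc_0+\cdots+\afunc_l})=\jo{\dg+l}(\difM)$. For $l=0$, property \coB\ supplies $\omega_0\in\RRR$ with $\jo{\dg}(\difM)=\jo{\dg}(\Bsh{\omega_0})$, so one puts $\afunc_0=\omega_0$. For the step, given $\afunc_0,\dots,\afunc_{l-1}$, put $\bfunc=\afunc_0+\cdots+\afunc_{l-1}$ and $g=\Bsh{\bfunc}$, so that $\jo{\dg+l-1}(g)=\jo{\dg+l-1}(\difM)$; since $g\in\gimShBO\subset\grp$ and $\grp$ is a subgroup, $g^{-1}\circ\difM\in\grp$, and because $m$-jets of compositions and inverses depend only on the $m$-jets of the factors, $\jo{\dg+l-1}(g^{-1}\circ\difM)=\jo{\dg+l-1}(\id)$. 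Applying \coC\ with $k=\dg+l$ yields the unique homogeneous polynomial $\omega_l$ of degree $l$ with $\jo{\dg+l}(g^{-1}\circ\difM)=\jo{\dg+l}(\Bsh{\omega_l})$, and one sets $\afunc_l=\omega_l$. Then, using $\difM=g\circ(g^{-1}\circ\difM)$ and Lemma~\ref{lm:shift-types}(b) in the form $\Bsh{\bfunc}\circ\Bsh{\omega_l}=\Bsh{\bfunc\circ\Bsh{\omega_l}+\omega_l}$,
\[\jo{\dg+l}(\difM)=\jo{\dg+l}(g\circ\Bsh{\omega_l})=\jo{\dg+l}\bigl(\Bsh{\bfunc\circ\Bsh{\omega_l}+\omega_l}\bigr);\]
and since $\Bsh{\omega_l}(x)=x+O(|x|^{\dg+l})$, the germ $\bfunc\circ\Bsh{\omega_l}+\omega_l$ has the same homogeneous terms of degree $\le l$ as $\bfunc+\omega_l=\afunc_0+\cdots+\afunc_l$, so by the elementary fact the last jet equals $\jo{\dg+l}(\Bsh{\afunc_0+\cdots+\afunc_l})$, which closes the induction.

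Finally, with $\hat{\afunc}=\sum_{l\ge0}\afunc_l$, Borel's theorem gives a smooth germ $\afunc$ at $\orig$ with $\jo{\infty}(\afunc)=\hat{\afunc}$; since $\orig\in\singB$, Lemma~\ref{lm:shift-types}(a) guarantees $\afunc\in\FuncRno$ and $\Bsh{\afunc}\in\DiffRno$. By the elementary fact and the construction, $\jo{k}(\Bsh{\afunc})=\jo{k}(\Bsh{\afunc_0+\cdots+\afunc_{k-\dg}})=\jo{k}(\difM)$ for every $k$, hence $\jo{\infty}(\Bsh{\afunc})=\jo{\infty}(\difM)$ and $\difM\in\gJShBO$, as needed. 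I expect the genuine work to lie in the inductive step — precisely, verifying that the polynomial $\omega_l$ delivered by \coC\ is exactly the next homogeneous term of $\afunc$, which rests on the order bookkeeping above; a minor but real point is that for $\dg=1$ the quadratic and higher terms of the flow already contribute to the $\dg$-jet, so \coB\ must be read through the exponential formula $e^{\Lmatr\omega_0}x$ rather than through $x+\PFunc(x)\omega_0$, whereas for $\dg\ge2$ the latter suffices.
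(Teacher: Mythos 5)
Your proposal is correct and takes essentially the same approach as the paper: both construct the homogeneous terms $\omega_0,\omega_1,\dots$ of the formal shift series degree-by-degree using \coB\ for the bottom term and \coC\ for the induction step, and both invoke Borel's theorem to realize the series as an honest smooth germ. The only expositional difference is that the paper iterates the correction via the maps $\qBsh{\difM}{-\sum_{i\le l}\omega_i}(x)=\BFlow\bigl(\difM(x),-\sum_{i\le l}\omega_i(x)\bigr)$ and verifies $\jo{\infty}(\hBshh{-\afunc})=\jo{\infty}(\id)$ using the equivalence in Corollary~\ref{cor:rel_betw_shifts}(2), whereas you pre-compose by $\Bsh{\bfunc}^{-1}$ and verify $\jo{\infty}(\Bsh{\afunc})=\jo{\infty}(\difM)$ directly; these are interchangeable via Lemma~\ref{lm:shift-types} and Corollary~\ref{cor:rel_betw_shifts}, and your ``elementary fact'' that $\jo{k}(\Bsh{\afunc})$ depends only on $\jo{k-\dg}(\afunc)$ is precisely the (A)$\Rightarrow$(B) direction of Corollary~\ref{cor:rel_betw_shifts}(1).
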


The rest of this section is devoted to the proof of Theorem~\ref{th:grp_subset_JSh} which will be completed in~\S\ref{sect:th-main-equiv}.
Our aim is to establish Lemma~\ref{lm:init-terms-shifts} and statement (2) of Corollary~\ref{cor:rel_betw_shifts} below.
They will be used in the proof of Theorem~\ref{th:grp_subset_JSh}.

\subsection{Spaces of jets}\label{sect:jets}
Let $\EndRnoRm$ be the space of germs at the origin $\orig\in\RRR^n$ of smooth maps $\difM:\RRR^n\to\RRR^m$ and $\EndRnoRmo$ be its subset consisting of all germs such that $\difM(\orig)=\orig$.
For $n=m$, we will write $\EndRno$ instead of $\EndRnoRno$.
Let also $\DiffRno \subset \EndRno$ be the subset consisting of germs of diffeomorphisms at $\orig$.
The space $\EndRnoR$ of germs of smooth functions will be denoted by $\FuncRno$.

For $\difM\in\EndRnoRm$ denote by $\jo{k}(\difM)$ its $k$-jet at $\orig\in\RRR^n$.
It will be convenient to formally assume that $(-1)$-jet of $\difM$ is identically zero: 
\begin{equation}\label{equ:jet-1}
\jo{(-1)}(\difM)\equiv 0.
\end{equation}
We will say that $\difM \in \EndRnoRm$ is \emph{$k$-small}, $(k\geq0)$, at $\orig$ provided $\jo{k-1}(\difM)=0$.
In particular, by assumption~\eqref{equ:jet-1} every $\difM\in\EndRnoRm$ is $0$-small and $\difM$ is $1$-small iff $\difM\in\EndRnoRmo$, i.e. $\jo{0}(\difM)=\difM(\orig)=\orig$.
If $\jo{\infty}(\difM)=0$ then $\difM$ is called \emph{flat}.

Let $\afunc,\bfunc\in\FuncRno$  be such that $\afunc$ is $a$-small, and $\bfunc$ is $b$-small for some $a,b\geq0$.
Then their product $\afunc\bfunc$ is $(a+b)$-small.
In other words
\begin{equation}\label{equ:jet_prod}
\jo{a-1}(\afunc)=\jo{b-1}(\bfunc)=0
\qquad \Rightarrow\qquad
\jo{a+b-1}(\afunc\bfunc)=0.
\end{equation}

We also say that $\difM\in\EndRnoRmo$ is \emph{homogeneous of degree $k$} if its coordinate functions are homogeneous polynomials of degree $k$.
Let $\JRnok$, $(0\leq k < \infty)$, be the space of all polynomial maps $\difM:\RRR^n\to\RRR^n$ of degree $\leq k$ such that $\difM(\orig)=\orig$.
Similarly, put
$$\JRnoi = \bigl\{ \tau \in \bigl(\RRR[[x_1,\ldots,x_n]] \bigr)^n \ : \ \tau(\orig)=0 \bigr\}.$$

Define the following linear map $\jo{k}:\EndRno \to \JRnok$ associating to every $\difM\in\EndRno$ its $k$-jet $\jo{k}(\difM)$ at $\orig$.
Then it is easy to verify that:
\begin{equation}\label{equ:jfg_jf_jg}
 \jo{k}(f\circ g) = \jo{k}(\jo{k}(f) \circ \jo{k}(g)),
\qquad
f,g\in\EndRno,
\end{equation}
see e.g.~\cite[\S4]{Sternberg:AJM:1957}.
The following lemma is a direct corollary of~\eqref{equ:jfg_jf_jg}.
\begin{lemma}\label{lm:rel_between_jets}
Let $f,g \in \DiffRno$.
Then for every $k=0,\ldots,\infty$ the following conditions are equivalent:
$$
\jo{k}(f)=\jo{k}(g) 
\quad \Leftrightarrow\quad 
\jo{k}(f^{-1}\circ g)=\jo{k}(\id)
\quad \Leftrightarrow\quad 
\jo{k}(f^{-1})=\jo{k}(g^{-1}).
$$ 
Moreover if $\afunc,\bfunc\in\FuncRno$, then
$$
\jo{k}(\afunc)=\jo{k}(\bfunc)
\quad \Longleftrightarrow\quad 
\jo{k}(\afunc\circ f)=\jo{k}(\bfunc \circ f). \qed
$$
\end{lemma}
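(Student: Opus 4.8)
The plan is to deduce every assertion formally from the jet composition identity~\eqref{equ:jfg_jf_jg}. First I would note that, for each fixed $k=0,1,\ldots,\infty$, this identity turns $\JRnok$ (and, for $k=\infty$, $\JRnoi$) into a monoid under the \emph{truncated composition} $\sigma\ast\tau:=\jo{k}(\sigma\circ\tau)$, with unit $\jo{k}(\id)$, which is two-sided neutral since $\jo{k}(\sigma)=\sigma$ for $\sigma\in\JRnok$; associativity is a double application of~\eqref{equ:jfg_jf_jg}, both sides of $(\sigma\ast\tau)\ast\rho=\sigma\ast(\tau\ast\rho)$ being equal to $\jo{k}(\sigma\circ\tau\circ\rho)$. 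In these terms, \eqref{equ:jfg_jf_jg} says precisely that $\jo{k}\colon\EndRno\to\JRnok$ is a homomorphism of monoids, $\jo{k}(u\circ v)=\jo{k}(u)\ast\jo{k}(v)$. Taking $v=u^{-1}$ with $u=f\in\DiffRno$ gives $\jo{k}(f)\ast\jo{k}(f^{-1})=\jo{k}(\id)=\jo{k}(f^{-1})\ast\jo{k}(f)$, so $\jo{k}(f)$ is invertible in $(\JRnok,\ast)$ and, by uniqueness of inverses in a monoid, $\jo{k}(f)^{-1}=\jo{k}(f^{-1})$.

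With this in hand the three conditions are linked by elementary monoid algebra. Since $\jo{k}(f^{-1})=\jo{k}(f)^{-1}$, and likewise for $g$, the equality $\jo{k}(f)=\jo{k}(g)$ is equivalent to the equality of their (unique) inverses $\jo{k}(f^{-1})=\jo{k}(g^{-1})$; this is the equivalence of the first and third conditions. For the first and second, I would rewrite, using the homomorphism property, $\jo{k}(f^{-1}\circ g)=\jo{k}(f^{-1})\ast\jo{k}(g)=\jo{k}(f)^{-1}\ast\jo{k}(g)$; left-composing with $\jo{k}(f)$ and using associativity together with $\jo{k}(f)\ast\jo{k}(f)^{-1}=\jo{k}(\id)$ shows that this equals $\jo{k}(\id)$ if and only if $\jo{k}(g)=\jo{k}(f)$.

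For the last assertion I would invoke the analogue of~\eqref{equ:jfg_jf_jg} for germs valued in $\RRR^{m}$, namely $\jo{k}(\afunc\circ f)=\jo{k}\!\bigl(\jo{k}(\afunc)\circ\jo{k}(f)\bigr)$ for $\afunc\in\FuncRno$ and $f\in\DiffRno$; this is the chain rule for Taylor expansions (\cite[\S4]{Sternberg:AJM:1957}), and it also follows from~\eqref{equ:jfg_jf_jg} applied to the auxiliary germ $x\mapsto(\afunc(x)-\afunc(\orig),0,\ldots,0)\in\EndRno$, which post-composes with $f$ exactly as $\afunc$ does because $f(\orig)=\orig$. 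From this identity $\jo{k}(\afunc)=\jo{k}(\bfunc)$ forces $\jo{k}(\afunc\circ f)=\jo{k}(\bfunc\circ f)$ at once; conversely, since $\afunc=(\afunc\circ f)\circ f^{-1}$ and $\bfunc=(\bfunc\circ f)\circ f^{-1}$ as germs, applying this forward implication to the functions $\afunc\circ f$, $\bfunc\circ f$ and the diffeomorphism $f^{-1}$ recovers $\jo{k}(\afunc)=\jo{k}(\bfunc)$.

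I do not anticipate a genuine obstacle, since the whole argument is essentially formal; the only points that deserve an explicitly written-out line are the associativity of $\ast$ (a short computation from~\eqref{equ:jfg_jf_jg}) and the passage from~\eqref{equ:jfg_jf_jg} to its $\RRR^{m}$-valued version used in the last paragraph.
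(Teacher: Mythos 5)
Your proof is correct, and it matches the approach the paper intends: the paper states that the lemma is a direct corollary of the jet-composition identity~\eqref{equ:jfg_jf_jg} and offers no further argument, and your monoid reformulation is simply a careful, explicit writing-out of that derivation. The two points genuinely worth spelling out — associativity of the truncated composition and the $\RRR$-valued version of~\eqref{equ:jfg_jf_jg} needed for the last equivalence (via $x\mapsto(\afunc(x)-\afunc(\orig),0,\ldots,0)$, which works because $f(\orig)=\orig$) — are handled correctly.
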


\subsection{Jets of a flow.}
First we introduce the following notation.
For a smooth mapping $\AFld=(\AFld^1,\ldots,\AFld^n):\RRR^n\to\RRR^n$ denote
$$
\nabla\AFld =
\left\|
\begin{array}{cccc}
\frac{\partial \AFld^1}{\partial x_1} & \frac{\partial \AFld^1}{\partial x_2} & \cdots & \frac{\partial \AFld^1}{\partial x_n} \\ [2mm]
\frac{\partial \AFld^2}{\partial x_1} & \frac{\partial \AFld^2}{\partial x_2} & \cdots & \frac{\partial \AFld^2}{\partial x_n} \\ [2mm]
\cdots & \cdots & \cdots & \cdots  \\ [2mm]
\frac{\partial \AFld^n}{\partial x_1} & \frac{\partial \AFld^n}{\partial x_2} & \cdots & \frac{\partial \AFld^n}{\partial x_n}
\end{array}
\right\|.
$$
Thus $\nabla\AFld$ is an $(n\times n)$-matrix whose rows are the gradients of the corresponding coordinate functions of $\AFld$.

Now let $\BFld=(\BFld^1, \ldots,\BFld^n)$ be a smooth vector field defined on some neighbourhood $\Vman$ of $\orig\in\RRR^n$ and 
$$\BFlow: \Vman\times\RRR \supset \;\BDom\;\longrightarrow\;\RRR^n$$ be the local flow of $\BFld$, so
\begin{equation}\label{equ:def-flow-ODE}
\frac{\partial\BFlow}{\partial t}(x,t) = \BFld(\BFlow(x,t)) \qquad \text{and} \qquad \BFlow(x,0)=x.
\end{equation}
Hence the Taylor expansion of $\BFlow$ in $t$ at $x=\orig$ is given by
\begin{equation}\label{equ:BFlow_t}
\BFlow(x,t) = x + \vfunc_1(x) \;t + \vfunc_2(x)\; \frac{t^2}{2} + \ldots + \vfunc_{n}(x)\; \frac{t^{n}}{n!} + \cdots,
\end{equation}
where $\vfunc_i(x)  = \frac{\partial^i\BFlow}{\partial t^i}(x,t)|_{t=0}$.
It follows from~\eqref{equ:def-flow-ODE} that $\vfunc_1=\BFld$.
\begin{lemma}\label{lm:ODEs-for-flow}
For every $i\geq1$ we have that
$$
\vfunc_{i+1}=\nabla\vfunc_i \cdot \BFld =
\left\|
\begin{array}{cccc} 
\frac{\partial \vfunc_i^1}{\partial x_1} & \frac{\partial \vfunc_i^1}{\partial x_2} & \cdots & \frac{\partial \vfunc_i^1}{\partial x_n} \\ [2mm]
\frac{\partial \vfunc_i^2}{\partial x_1} & \frac{\partial \vfunc_i^2}{\partial x_2} & \cdots & \frac{\partial \vfunc_i^2}{\partial x_n} \\ [2mm]
\cdots & \cdots & \cdots & \cdots  \\ [2mm]
\frac{\partial \vfunc_i^n}{\partial x_1} & \frac{\partial \vfunc_i^n}{\partial x_2} & \cdots & \frac{\partial \vfunc_i^n}{\partial x_n}
\end{array}
\right\|
\cdot
\left\|
\begin{array}{c}
\BFld^1 \\ [2mm]
\BFld^2 \\ [2mm]
\cdots \\ [2mm]
\BFld^n
\end{array}
\right\|
=
\left\|
\begin{array}{c}
\langle \nabla \vfunc_i^1,\BFld\rangle  \\ [2mm]
\langle \nabla \vfunc_i^2,\BFld\rangle  \\ [2mm]
\cdots \\ [2mm]
\langle \nabla \vfunc_i^n,\BFld\rangle
\end{array}
\right\|,
$$
where $\vfunc_i^j$ is the $j$-th coordinate function of $\vfunc_i$.
Moreover,
$\frac{\partial^i\BFlow}{\partial t^i}(x,t) = \vfunc_i (\BFlow(x,t)).$
If $\jo{\dg-1}(\BFld)=0$, then 
\begin{equation}\label{equ:estim_ord_vi}
\jo{i(\dg-1)}(\vfunc_i)=0, \qquad \forall i\geq1.
\end{equation}
\end{lemma}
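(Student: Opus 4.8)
The plan is to prove the three assertions by induction on $i$, establishing the recursion $\vfunc_{i+1}=\nabla\vfunc_i\cdot\BFld$ and the identity $\frac{\partial^i\BFlow}{\partial t^i}(x,t)=\vfunc_i(\BFlow(x,t))$ together, and then deriving the order estimate~\eqref{equ:estim_ord_vi} separately.

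For the simultaneous induction, the base case $i=1$ of the identity is nothing but the defining ODE~\eqref{equ:def-flow-ODE}, since $\vfunc_1=\BFld$. For the inductive step I would assume $\frac{\partial^i\BFlow}{\partial t^i}(x,t)=\vfunc_i(\BFlow(x,t))$, differentiate both sides with respect to $t$, and apply the chain rule together with~\eqref{equ:def-flow-ODE}:
$$\frac{\partial^{i+1}\BFlow}{\partial t^{i+1}}(x,t)=\nabla\vfunc_i(\BFlow(x,t))\cdot\frac{\partial\BFlow}{\partial t}(x,t)=\bigl(\nabla\vfunc_i\cdot\BFld\bigr)(\BFlow(x,t)).$$
Putting $t=0$ and using $\BFlow(x,0)=x$ yields $\vfunc_{i+1}(x)=(\nabla\vfunc_i\cdot\BFld)(x)$, which is the required recursion; substituting this back into the display gives the identity for $i+1$. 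The matrix form in the statement is just the row-by-row expansion of $\nabla\vfunc_i\cdot\BFld$, each $j$-th row being the inner product $\langle\nabla\vfunc_i^j,\BFld\rangle$.

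For~\eqref{equ:estim_ord_vi} I would again induct on $i$. The case $i=1$ is the hypothesis $\jo{\dg-1}(\vfunc_1)=\jo{\dg-1}(\BFld)=0$. Assuming $\jo{i(\dg-1)}(\vfunc_i)=0$, i.e.\ each coordinate $\vfunc_i^j$ is $\bigl(i(\dg-1)+1\bigr)$-small, every partial derivative $\partial\vfunc_i^j/\partial x_k$ is $\bigl(i(\dg-1)\bigr)$-small (differentiation lowers order by one), while each $\BFld^k$ is $\dg$-small by hypothesis. Applying the product rule for smallness~\eqref{equ:jet_prod} with $a=i(\dg-1)$ and $b=\dg$ shows that every summand of $\vfunc_{i+1}^j=\sum_k(\partial\vfunc_i^j/\partial x_k)\,\BFld^k$ is $\bigl(i(\dg-1)+\dg\bigr)$-small; since $(i+1)(\dg-1)+1=i(\dg-1)+\dg$, this is precisely $\jo{(i+1)(\dg-1)}(\vfunc_{i+1})=0$, closing the induction. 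I do not anticipate a genuine obstacle here: the only care required is to keep the \emph{$k$-small} bookkeeping consistent with the conventions~\eqref{equ:jet-1} and~\eqref{equ:jet_prod}, and to differentiate the correct slot of $\BFlow$ when applying the chain rule.
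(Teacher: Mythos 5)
Your proposal is correct and follows essentially the same route as the paper: differentiate the induction hypothesis $\frac{\partial^i\BFlow}{\partial t^i}=\vfunc_i\circ\BFlow$ with respect to $t$, apply the chain rule and the flow ODE~\eqref{equ:def-flow-ODE}, then set $t=0$ to get the recursion; and for~\eqref{equ:estim_ord_vi}, induct using the fact that differentiation drops smallness by one together with the product rule~\eqref{equ:jet_prod}. The paper merely writes out the $i=2$ case and leaves the induction implicit, and applies~\eqref{equ:jet_prod} directly to $\nabla\vfunc_i\cdot\BFld$ rather than summand by summand as you do, but these are presentational differences, not different ideas.
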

\begin{proof}
We will now calculate $\vfunc_2$.
Let $\BFlow^j$ be the $j$-th coordinate function of $\BFlow$.
Then 
\begin{align*}
 \frac{\partial^2\BFlow^j}{\partial t^2}(x,t)  & =
\frac{\partial}{\partial t}\frac{\partial\BFlow^j}{\partial t}(x,t) \overset{\eqref{equ:def-flow-ODE}}{=\!=\!=}
\frac{\partial}{\partial t}\BFld^j(\BFlow(x,t)) \overset{\eqref{equ:def-flow-ODE}}{=\!=\!=} \\
&  =   \sum\limits_{k=1}^{n} \frac{\partial \BFld^j}{\partial x_k} (\BFlow(x,t)) \cdot \BFld^k(\BFlow(x,t)) = \\
& =   \Bigl( \sum\limits_{j=1}^{n} \frac{\partial \BFld^j}{\partial x_k} \cdot \BFld^k \Bigr) \circ \BFlow(x,t) 
 = \langle \grad\BFld^j, \BFld \rangle \circ \BFlow(x,t).
\end{align*}
Therefore
$$\frac{\partial^2\BFlow}{\partial t^2}(x,t) = (\nabla\BFld \cdot \BFld) \circ \BFlow(x,t) = \vfunc_2 \circ \BFlow(x,t).$$

Calculations for other $\vfunc_i$ are similar and are left to the reader.

\noindent{\em Proof of \eqref{equ:estim_ord_vi}.} 
We have $\jo{1\cdot (\dg-1)}(\vfunc_1) = \jo{\dg-1}(\BFld)=0$.
Suppose by induction that $\jo{i(\dg-1)}(\vfunc_{i})=0$ for some $i$.

Then $\jo{i(\dg-1)-1}(\nabla\vfunc_{i})=0$.
Since $\jo{\dg-1}(\BFld)=0$, it follows from~\eqref{equ:jet_prod} that
$\jo{i(\dg-1)+\dg-1}(\nabla\vfunc_{i}\cdot\BFld)=\jo{(i+1)(\dg-1)}(\vfunc_{i+1})=0$.
\end{proof}

\subsection{Initial non-zero jets of smooth shifts for non-flat vector fields.}
Suppose now that there exists $\dg\geq1$ such that $\jo{\dg-1}(\BFld)=0$ and 
$$\PFunc = \jo{\dg}(\BFld):\RRR^n\to\RRR^n$$
is a non-zero homogeneous map of degree $\dg$.
For $\dg=1$ we will write $$\PFunc(x) = \Lmatr\cdot x,$$ where $\Lmatr$ is a certain non-zero $n\times n$-matrix.
\begin{lemma}\label{lm:init-terms-shifts}
Let $\afunc:\RRR^n\to\RRR$ be an $l$-small germ at $\orig$ for some $l\geq0$, so $\jo{l}(\afunc)=\omega$ is a homogeneous polynomial of degree $l$.
Put $\Bsh{\afunc}(x) = \BFlow(x,\afunc(x))$.
Then
\begin{equation}\label{equ:pljet_Fa}
\jo{\dg+l}(\Bsh{\afunc})(x) = \left\{
\begin{array}{ll}
e^{\Lmatr\, \omega} \cdot x, & \dg=1 \ \text{and} \ l=0,  \\ [1.5mm]
x + \PFunc(x)\cdot \omega(x), & \text{otherwise}.
\end{array}
\right.
\end{equation}
Thus the $(\dg+l)$-jet of $\Bsh{\afunc}$ depends only on the $l$-jet of $\afunc$.
Moreover, 
\begin{equation}\label{equ:jFainv_jF_a}
\jo{\dg+l}(\Bsh{\afunc}^{-1})=\jo{\dg+l}(\Bsh{-\afunc}). 
\end{equation}
\end{lemma}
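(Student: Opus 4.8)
The plan is to substitute $t=\afunc(x)$ into the Taylor expansion~\eqref{equ:BFlow_t} of $\BFlow$ and, using the order estimate~\eqref{equ:estim_ord_vi} for the coefficients $\vfunc_i$ together with multiplicativity of order~\eqref{equ:jet_prod}, to isolate the summands $\vfunc_i(x)\,\afunc(x)^i/i!$ that can influence the $(\dg+l)$-jet at $\orig$. This substitution is transparent only when $\afunc(\orig)=0$, so I would first separate the constant $\omega_0=\afunc(\orig)$ by setting $\afunc_1=\afunc-\omega_0$, which is $1$-small, and factoring $\Bsh{\afunc}=\Bsh{\omega_0}\circ\Bsh{\afunc_1}$ via the composition rule in~\eqref{equ:Bsh_hBsh_0} applied to the constant germ $\omega_0$ (so $\Bsh{\omega_0}$ is the time-$\omega_0$ map of the flow). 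Note $\omega=\omega_0$ when $l=0$, whereas if $l\ge1$ then $\omega_0=0$, $\Bsh{\omega_0}=\id$, $\afunc_1=\afunc$, and $\omega=\jo{l}(\afunc_1)$.

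The first ingredient is the formula for a germ vanishing at $\orig$: if $\afunc_1$ is $l_1$-small with $1\le l_1<\infty$ and $\omega_1=\jo{l_1}(\afunc_1)$, then $\jo{\dg+l_1}(\Bsh{\afunc_1})(x)=x+\PFunc(x)\,\omega_1(x)$. To prove it, expand $\BFlow(x,t)$ in $t$ to order $N=\dg+l_1$ with integral remainder and set $t=\afunc_1(x)$. By~\eqref{equ:estim_ord_vi} and~\eqref{equ:jet_prod} the summand $\vfunc_i(x)\,\afunc_1(x)^i$ is $\bigl(i(\dg+l_1-1)+1\bigr)$-small, which for $i\ge2$ is of order $>\dg+l_1$ because $\dg+l_1\ge2$; the remainder equals $\afunc_1(x)^{N+1}$ times a smooth germ at $\orig$ (well-defined by the argument of Lemma~\ref{lm:shift-types}(a)), hence is $(N+1)l_1$-small with $(N+1)l_1\ge N+1>\dg+l_1$; and the summand $\vfunc_1(x)\,\afunc_1(x)=\BFld(x)\,\afunc_1(x)$ has $(\dg+l_1)$-jet $\PFunc(x)\,\omega_1(x)$ since $\jo{\dg}(\BFld)=\PFunc$. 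Adding the $i=0$ summand $x$ gives the claim. (If $\afunc_1$ is flat one uses instead that $\Bsh{\afunc_1}(x)-x=\bigl(\int_0^1\tfrac{\partial\BFlow}{\partial t}(x,s\afunc_1(x))\,ds\bigr)\afunc_1(x)$ is flat, so $\Bsh{\afunc_1}$ is $\infty$-tangent to $\id$.)

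The second ingredient is the $\dg$-jet of the time-$\omega_0$ map, and here lies the main obstacle. For $\dg\ge2$ the same substitution with the constant $t=\omega_0$ goes through, since now $\vfunc_i$ is $\bigl(i(\dg-1)+1\bigr)$-small, so every summand $\vfunc_i(x)\,\omega_0^i/i!$ with $i\ge2$ and the remainder (which only involves $\vfunc_{N+1}$ composed with a map fixing $\orig$) have order $>\dg$, while the $i=0,1$ terms give $x+\omega_0\PFunc(x)$; thus $\jo{\dg}(\Bsh{\omega_0})(x)=x+\omega_0\PFunc(x)$. For $\dg=1$ this crude counting collapses: infinitely many of the summands $\vfunc_i(x)\,\omega_0^i/i!$ attain order $1$, and no finite truncation suffices. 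Here I would instead differentiate~\eqref{equ:def-flow-ODE} in $x$ at $\orig$, obtaining the variational equation $\tfrac{d}{dt}\,D_x\BFlow(\orig,t)=\Lmatr\cdot D_x\BFlow(\orig,t)$ with $D_x\BFlow(\orig,0)=\id$, so that $D_x\Bsh{\omega_0}(\orig)=D_x\BFlow(\orig,\omega_0)=e^{\Lmatr\omega_0}$ and, since $\Bsh{\omega_0}(\orig)=\orig$, $\jo{1}(\Bsh{\omega_0})(x)=e^{\Lmatr\omega_0}x$; the point is precisely that one must integrate the linearized flow to recover the full exponential rather than merely $x+\omega_0\Lmatr x$.

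Finally I would assemble~\eqref{equ:pljet_Fa} and~\eqref{equ:jFainv_jF_a}. For $l\ge1$, the first ingredient applied to $\afunc_1=\afunc$ is exactly~\eqref{equ:pljet_Fa}. For $l=0$, the first ingredient (together with its flat case) gives $\jo{\dg}(\Bsh{\afunc_1})=\jo{\dg}(\id)$, whence by~\eqref{equ:jfg_jf_jg} and the second ingredient $\jo{\dg}(\Bsh{\afunc})=\jo{\dg}(\Bsh{\omega_0})$, which is~\eqref{equ:pljet_Fa} since $\omega=\omega_0$. For~\eqref{equ:jFainv_jF_a}, observe that $\jo{\dg+l}(\Bsh{\afunc}^{-1})$ is the compositional inverse of $\jo{\dg+l}(\Bsh{\afunc})$ in the group of $(\dg+l)$-jets, by~\eqref{equ:jfg_jf_jg} and Lemma~\ref{lm:rel_between_jets}; since replacing $\omega$ by $-\omega$ inverts each of the normal forms $e^{\Lmatr\omega}x$ and $x+\PFunc(x)\omega(x)$ on the right of~\eqref{equ:pljet_Fa} (in the latter case $\PFunc(x)\omega(x)$ is homogeneous of degree $\dg+l\ge2$, so $(x+R)^{-1}=x-R$ modulo order $>\dg+l$), and since $\jo{\dg+l}(\Bsh{-\afunc})$ is, by~\eqref{equ:pljet_Fa} just proved, the normal form attached to $-\afunc$, the two jets coincide.
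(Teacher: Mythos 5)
Your proof is correct, and while it follows the same broad strategy as the paper --- substitute $\afunc$ into the Taylor expansion~\eqref{equ:BFlow_t} and use the order bound~\eqref{equ:estim_ord_vi} together with~\eqref{equ:jet_prod} to kill all but the $i=0,1$ terms --- it diverges in three places, each buying something. First, you factor out the constant part $\omega_0=\afunc(\orig)$ via $\Bsh{\afunc}=\Bsh{\omega_0}\circ\Bsh{\afunc_1}$ and then compose jets via~\eqref{equ:jfg_jf_jg}; the paper instead substitutes the whole of $\afunc$ at once, which forces it to handle the case $\afunc(\orig)\ne0$ by summing $\jo{1}(\vfunc_i\,\afunc^i)$ over \emph{all} $i$. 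That term-by-term summation treats~\eqref{equ:BFlow_t} as a genuinely convergent expansion, which for a $C^\infty$ flow is a subtle point the paper does not justify; your variational-equation argument for $D_x\BFlow(\orig,\omega_0)=e^{\Lmatr\omega_0}$ (noting that $\partial_t\BFlow(\orig,\cdot)=\BFld(\orig)=0$ kills the chain-rule cross term) is both standard and airtight, and is the cleanest way to explain why the exponential, rather than $x+\omega_0\Lmatr x$, appears when $\dg=1$. Second, you control the tail by writing the integral remainder and factoring out $\afunc_1^{N+1}$, which makes the ``order $>\dg+l$'' claim for the remainder explicit; the paper leaves this implicit. Third, for~\eqref{equ:jFainv_jF_a} the paper uses the identity $\Bsh{\afunc}^{-1}=\Bsh{-\afunc\circ\Bsh{\afunc}^{-1}}$ from~\eqref{equ:Bsh_hBsh_0} and shows $\jo{l}(\afunc\circ\Bsh{\afunc}^{-1})=\jo{l}(\afunc)$, then invokes the already-proved~\eqref{equ:pljet_Fa}; you instead compute the inverse directly in the finite-jet group, using~\eqref{equ:jfg_jf_jg} and Lemma~\ref{lm:rel_between_jets} to reduce to checking that each normal form on the right of~\eqref{equ:pljet_Fa} is inverted by $\omega\mapsto-\omega$ (with the observation that $(x+R)^{-1}\equiv x-R$ modulo order $>\dg+l$ when $R$ is homogeneous of degree $\dg+l\ge2$). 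The paper's route is slightly shorter because it never touches the jet group; yours is more self-contained once the normal form is in hand. All the order estimates and degree bookkeeping in your argument check out, including the corner case $\dg+l_1=2$, $i=2$, and the flat case of $\afunc_1$.
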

\begin{proof}
\eqref{equ:pljet_Fa}.
Substituting $\afunc$ into~\eqref{equ:BFlow_t} instead of $t$ we get
$$
\Bsh{\afunc}(x)=
\BFlow(x,\afunc(x))=x + \BFld(x) \;\afunc(x) + \cdots + \vfunc_{i}(x)\; \frac{\afunc(x)^{i}}{i!} + \cdots
$$
Though this series converges near $z$, the infinitesimal orders of its summands at $\orig$ do not necessarily increase when $n\to\infty$.
Therefore in order to find the initial non-zero jet of $\Bsh{\afunc}$ we should investigate each of $\vfunc_{i}\afunc^{i}$.
In fact we will get only one exceptional case $\dg=1$ and $l=0$.

First we calculate $\jo{1}(\Bsh{\afunc})$.
Suppose that $\jo{1}(\BFld) = \Lmatr\cdot x$ for some possibly zero matrix $\Lmatr$.
Then $\jo{0}(\nabla\BFld)=\nabla\BFld(\orig)=\Lmatr$, 
$$
\begin{array}{rcl}
\jo{1}(\vfunc_2) = \jo{1}(\nabla\BFld \cdot \BFld) & = & 
\jo{0}(\nabla\BFld) \cdot \jo{1}(\BFld) +
\jo{1}(\nabla\BFld)\cdot \jo{0}(\BFld) \
= \\ [2mm] 
& =  & 
\Lmatr \cdot \Lmatr \cdot x  + \jo{1}(\nabla\BFld)\cdot 0 \ = \ \Lmatr^2\cdot x,
\end{array}
$$
and by induction on $i$ we obtain
$$
\jo{1}(\vfunc_i)= \jo{1}\bigl(\nabla\vfunc_{i-1} \cdot \BFld\bigr) = \Lmatr^i\cdot x.
$$
Therefore 
$$
\begin{array}{rcl}
\jo{1}(\vfunc_{i}\afunc^{i}) & = &
\jo{1}(\vfunc_{i}) \cdot \jo{0}(\afunc^{i}) + 
\jo{0}(\vfunc_{i}) \cdot \jo{1}(\afunc^{i}) = \\ [2mm]
& = & L^i \cdot x \cdot \afunc(\orig) + 0 \cdot \jo{1}(\afunc^{i}) = 
\afunc(\orig) \cdot L^i \cdot  x,
\end{array}
$$
whence 
$$
\jo{1}(\Bsh{\afunc})(x) = 
 x + \sum_{i=1}^{\infty} \frac{\afunc(\orig)^{i}}{i!} \cdot \Lmatr^{i}\cdot x =
e^{\Lmatr \cdot \afunc(\orig)} \cdot x.
$$

Thus if $p=1$ and $l=0$, i.e $\Lmatr\not=0$ and $\jo{0}(\afunc)=\afunc(\orig)=\omega\not=0$, we obtain that $\jo{\dg+l}(\Bsh{\afunc})(x) = \jo{1}(\Bsh{\afunc})(x)= e^{\Lmatr \,\omega} \cdot x$.

Otherwise $\dg+l\geq2$.
We claim in this case $\jo{\dg+l}(\vfunc_i \cdot \afunc^i) = 0$ for $i\geq2$.
This will imply that 
$$
\jo{\dg+l}(\Bsh{\afunc})(x) = 
\jo{\dg+l}(x + \BFld(x) \afunc(x)) = 
x + \PFunc(x)\cdot \omega(x).
$$
To calculate $\jo{\dg+l}(\vfunc_i \cdot \afunc^i)$ notice that by~\eqref{equ:estim_ord_vi} $\jo{i(\dg-1)}(\vfunc_i)=0$ and by assumption $\jo{l-1}(\afunc)=0$ as well.
Then it follows from~\eqref{equ:jet_prod} that $\jo{il-1}(\afunc^i)=0$ and
$$\jo{i(\dg-1) + il}(\vfunc_i \cdot \afunc^i) = \jo{i(\dg+l-1)}(\vfunc_i \cdot \afunc^i) = 0.$$
It remains to note that $\dg+l < i(\dg+l-1)$ if $i\geq2$ and $\dg+l\geq2$.
Hence $\jo{\dg+l}(\vfunc_i \, \afunc^{i}) = 0$ for $i\geq2$.

\emph{Proof of~\eqref{equ:jFainv_jF_a}}.
Recall that by~\eqref{equ:Bsh_hBsh_0} $\Bsh{\afunc}^{-1}=\Bsh{-\afunc\circ \Bsh{\afunc}^{-1}}$. We will show that $\jo{l}(\afunc\circ\Bsh{\afunc}^{-1})=\jo{l}(\afunc)$.
Then it will follow from~\eqref{equ:pljet_Fa} that
$$
\jo{\dg+l}(\Bsh{\afunc}^{-1})=
\jo{\dg+l}(\Bsh{-\afunc\circ \Bsh{\afunc}^{-1}}) \stackrel{\eqref{equ:pljet_Fa}}{=\!=\!=}
\jo{\dg+l}(\Bsh{-\afunc}).
$$

Suppose that $l=0$.
Since $\BFlow_t(\orig)=\orig$ for all $t\in\RRR$, we obtain that $\jo{0}(\afunc\circ \Bsh{\afunc}^{-1})=\jo{0}(\afunc)=\afunc(\orig)$. 

If $l\geq1$, then it follows from~\eqref{equ:pljet_Fa} that $\jo{1}(\Bsh{\afunc})(x)=x$, whence $\jo{l}(\afunc\circ \Bsh{\afunc}^{-1})=\jo{l}(\afunc)$.
\end{proof}

\begin{corollary}\label{cor:rel_betw_shifts}
Let $\afunc,\bfunc\in \FuncRno$, $\difM\in\DiffRno$, $l,k=0,1,\ldots,\infty$.
\begin{enumerate}
\item[\rm(1)]
The following conditions {\rm(A)-(C)} are equivalent:
  \begin{enumerate}
    \item[\rm(A)] 
      $\jo{l}(\afunc)=\jo{l}(\bfunc)$,
    \item[\rm(B)] 
      $\jo{\dg+l}(\Bsh{\afunc})=\jo{\dg+l}(\Bsh{\bfunc})$,
    \item[\rm(C)] 
      $\jo{\dg+l}(\hBshh{\afunc})=\jo{\dg+l}(\hBshh{\bfunc})$.
  \end{enumerate}

\item[\rm(2)]
The following conditions {\rm(D)} and {\rm(E)} are equivalent:
   \begin{enumerate}
     \item[\rm(D)]
       $\jo{k}(\difM) = \jo{k}(\Bsh{\afunc})$,
     \item[\rm(E)]
       $\jo{k}(\hBshh{-\afunc})=\jo{k}(\id)$.
     \end{enumerate}
\end{enumerate}
\end{corollary}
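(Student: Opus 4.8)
The plan is to derive the corollary as a bookkeeping consequence of Lemma~\ref{lm:init-terms-shifts}, Lemma~\ref{lm:shift-types}, Lemma~\ref{lm:rel_between_jets} and the jet chain rule~\eqref{equ:jfg_jf_jg}; no new analytic input is needed, since the substance is already contained in Lemma~\ref{lm:init-terms-shifts}. First I would record the following elementary \emph{jet-stability of shifts}: if $g_1,g_2\in\EndRno$ and $\afunc_1,\afunc_2\in\FuncRno$ satisfy $\jo{k}(g_1)=\jo{k}(g_2)$ and $\jo{k}(\afunc_1)=\jo{k}(\afunc_2)$, then both $\jo{k}(\afunc_1\circ g_1)=\jo{k}(\afunc_2\circ g_2)$ and $\jo{k}\bigl(\BFlow(g_1(\cdot),\afunc_1(\cdot))\bigr)=\jo{k}\bigl(\BFlow(g_2(\cdot),\afunc_2(\cdot))\bigr)$. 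The first equality is~\eqref{equ:jfg_jf_jg} (extended in the obvious way to $\FuncRno$); the second follows from one Taylor-with-remainder expansion of the fixed smooth map $\BFlow$ about $(g_1(x),\afunc_1(x))$ --- the correction is a combination of the increments $g_2-g_1$ and $\afunc_2-\afunc_1$ with bounded smooth coefficients, and each increment vanishes to order $k+1$ at $\orig$ (here one uses $\BFld(\orig)=0$, so all germs in sight fix $\orig$, and $\BFlow_{t}(\orig)=\orig$).

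\emph{Part (1).} The implication $\text{(A)}\Rightarrow\text{(B)}$ is the last assertion of Lemma~\ref{lm:init-terms-shifts}: by~\eqref{equ:pljet_Fa} the jet $\jo{\dg+l}(\Bsh{\afunc})$ depends only on $\jo{l}(\afunc)$. For $\text{(B)}\Rightarrow\text{(A)}$ I would use~\eqref{equ:pljet_Fa} again: outside the exceptional case $\dg=1$, $l=0$ the two $(\dg+l)$-jets are $x+\PFunc(x)\,\omega_{\afunc}(x)$ and $x+\PFunc(x)\,\omega_{\bfunc}(x)$ with $\omega_{\afunc}=\jo{l}(\afunc)$, $\omega_{\bfunc}=\jo{l}(\bfunc)$, so their coincidence yields the polynomial identity $\PFunc(x)\bigl(\omega_{\afunc}-\omega_{\bfunc}\bigr)(x)\equiv0$, whence $\omega_{\afunc}=\omega_{\bfunc}$ because $\PFunc\neq0$ and $\RRR[x_1,\ldots,x_n]$ is an integral domain; in the remaining case equality reads $e^{\Lmatr\,\afunc(\orig)}=e^{\Lmatr\,\bfunc(\orig)}$, which, as $\Lmatr\neq0$, forces $\afunc(\orig)=\bfunc(\orig)$ (to be read modulo the period of $t\mapsto e^{\Lmatr t}$ when that one-parameter group is periodic --- the usual ambiguity of a shift function). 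For $\text{(A)}\Leftrightarrow\text{(C)}$ I would write $\hBshh{\afunc}=\Bsh{\afunc\circ\difM^{-1}}\circ\difM$ using~\eqref{equ:Bsh_hBsh_1} (legitimate since $\difM\in\DiffRno$); then by~\eqref{equ:jfg_jf_jg} and the invertibility of $\jo{\dg+l}(\difM)$, condition (C) is equivalent to $\jo{\dg+l}(\Bsh{\afunc\circ\difM^{-1}})=\jo{\dg+l}(\Bsh{\bfunc\circ\difM^{-1}})$, which by the already proved $\text{(A)}\Leftrightarrow\text{(B)}$ (applied to $\afunc\circ\difM^{-1}$ and $\bfunc\circ\difM^{-1}$) means $\jo{l}(\afunc\circ\difM^{-1})=\jo{l}(\bfunc\circ\difM^{-1})$, and this is (A) by Lemma~\ref{lm:rel_between_jets} (precomposition with the diffeomorphism $\difM$).

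\emph{Part (2).} The key identity is flow-additivity: with $q:=\hBshh{-\afunc}$, i.e.\ $q(x)=\BFlow(\difM(x),-\afunc(x))$, one has $\BFlow(q(x),\afunc(x))=\BFlow(\difM(x),0)=\difM(x)$, so $\difM(x)=\BFlow(q(x),\afunc(x))$, and $q\in\DiffRno$ by Lemma~\ref{lm:shift-types}(c). If (E) holds then $\jo{k}(q)=\jo{k}(\id)$, and jet-stability of shifts applied with $(g_1,g_2)=(q,\id)$ and $\afunc_1=\afunc_2=\afunc$ gives $\jo{k}(\difM)=\jo{k}\bigl(\BFlow(x,\afunc(x))\bigr)=\jo{k}(\Bsh{\afunc})$, i.e.\ (D). Conversely, assume (D). Since $\Bsh{\afunc},\difM\in\DiffRno$ (Lemma~\ref{lm:shift-types}(b)), Lemma~\ref{lm:rel_between_jets} gives $\jo{k}(\Bsh{\afunc}^{-1}\circ\difM)=\jo{k}(\id)$, and then by~\eqref{equ:jfg_jf_jg} also $\jo{k}\bigl(\afunc\circ\Bsh{\afunc}^{-1}\circ\difM\bigr)=\jo{k}(\afunc)$. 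Writing~\eqref{equ:Bsh_hBsh_0} as $\Bsh{\afunc}^{-1}\circ\difM(x)=\BFlow\bigl(\difM(x),-\afunc(\Bsh{\afunc}^{-1}(\difM(x)))\bigr)$, jet-stability of shifts --- applied with $(g_1,g_2)=(\difM,\difM)$ and the two time-functions $-\afunc\circ\Bsh{\afunc}^{-1}\circ\difM$ and $-\afunc$, which have the same $k$-jet --- yields $\jo{k}(\hBshh{-\afunc})=\jo{k}(\Bsh{\afunc}^{-1}\circ\difM)=\jo{k}(\id)$, i.e.\ (E).

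\emph{On the main difficulty.} There is no deep obstacle here: the whole corollary is an assembly of Lemma~\ref{lm:init-terms-shifts}, Lemma~\ref{lm:shift-types}, Lemma~\ref{lm:rel_between_jets} and~\eqref{equ:jfg_jf_jg}. The only two points that will require care are (i) the exceptional linear case $\dg=1$, $l=0$ in $\text{(B)}\Rightarrow\text{(A)}$, where the conclusion holds only up to the standard period ambiguity of shift functions at periodic points; and (ii) the fact that in Part (2) the germs $\hBshh{-\afunc}$ and $\Bsh{\afunc}^{-1}\circ\difM$ agree only at the level of $k$-jets under the standing hypothesis (not identically as in Lemma~\ref{lm:shift-types}(d)), so one must invoke jet-stability of shifts at exactly that point rather than the exact identity~\eqref{equ:Bsh_hBsh_4}.
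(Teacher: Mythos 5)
Your Part~(2) and the $\text{(A)}\Leftrightarrow\text{(C)}$ argument are correct, and in fact your Part~(2) is cleaner than the paper's: you prove $\text{(D)}\Leftrightarrow\text{(E)}$ directly via the ``jet-stability of shifts'' lemma, whereas the paper's chain $\jo{k}(\hBshh{-\afunc})=\jo{k}(\hBshh{-\afunc\circ\Bsh{\afunc}^{-1}\circ\difM})$ implicitly appeals to the $\text{(A)}\Leftrightarrow\text{(C)}$ equivalence of Part~(1). Your jet-stability lemma (same order $k$ on both sides) is correct and suffices at every point you invoke it. You are also right to flag the period ambiguity when $\dg=1$, $l=0$ and $E_{\Lmatr}$ is compact; this is a genuine issue which the paper's proof of the equivalence $\text{(c)}\Leftrightarrow\text{(b)}$ glosses over.

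However, there is a real gap in your $\text{(B)}\Rightarrow\text{(A)}$ argument for the non-exceptional case with $l\geq1$. You assert that $\jo{\dg+l}(\Bsh{\afunc})=x+\PFunc(x)\,\omega_{\afunc}(x)$ with $\omega_{\afunc}=\jo{l}(\afunc)$, but formula~\eqref{equ:pljet_Fa} in Lemma~\ref{lm:init-terms-shifts} is established only for an \emph{$l$-small} germ $\afunc$ (i.e.\ $\jo{l-1}(\afunc)=0$, so $\jo{l}(\afunc)$ is a single homogeneous polynomial of degree $l$). The corollary makes no such assumption: for $l\geq1$ a generic $\afunc\in\FuncRno$ has nonzero lower-order terms. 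For instance with $\dg=2$, $l=1$, $\afunc=1+x_1$, the $3$-jet of $\Bsh{\afunc}$ picks up the degree-$3$ homogeneous part of $\BFld$ multiplied by $\afunc(\orig)$ and a contribution from $\vfunc_2\cdot\afunc(\orig)^2$, neither of which appears in $x+\PFunc(x)(1+x_1)$. So the polynomial identity you factor through is false for general $\afunc,\bfunc$, and the cancellation that makes $\text{(B)}\Rightarrow\text{(A)}$ true is more subtle. The paper circumvents this entirely by the group identity $\Bsh{\afunc}\circ\Bsh{\bfunc}^{-1}=\Bsh{(\afunc-\bfunc)\circ\Bsh{\bfunc}^{-1}}$: in its chain $\text{(A)}\Leftrightarrow\text{(a)}\Leftrightarrow\text{(b)}\Leftrightarrow\text{(c)}\Leftrightarrow\text{(d)}\Leftrightarrow\text{(B)}$, Lemma~\ref{lm:init-terms-shifts} is applied only to the germ $(\afunc-\bfunc)\circ\Bsh{\bfunc}^{-1}$, which under hypothesis~(b) is genuinely $(l+1)$-small, so \eqref{equ:pljet_Fa} applies legitimately. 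Your argument needs to be replaced by this reduction (or by a direct degree-by-degree comparison of the flow expansions of $\Bsh{\afunc}$ and $\Bsh{\bfunc}$, tracking the first degree at which they disagree).

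Your citation of the ``last assertion'' of Lemma~\ref{lm:init-terms-shifts} for $\text{(A)}\Rightarrow\text{(B)}$ is similarly a mild overread --- the lemma proves ``$\jo{\dg+l}(\Bsh{\afunc})$ depends only on $\jo{l}(\afunc)$'' for $l$-small $\afunc$ --- but since your jet-stability lemma combined with the $\dg$-smallness of $\BFld$ does give the forward implication for arbitrary germs, that direction can be repaired in place.
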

\begin{proof}
(1) (A)$\Leftrightarrow$(B).
Notice that
\begin{equation}\label{equ:Fa_Fb_1}
\Bsh{\afunc}\circ\Bsh{\bfunc}^{-1} \stackrel{\eqref{equ:Bsh_hBsh_0}}{=\!=\!=}
\Bsh{\afunc}\circ\Bsh{-\bfunc \circ \Bsh{\bfunc}^{-1}} 
\stackrel{\eqref{equ:Bsh_hBsh_0}}{=\!=\!=}
\Bsh{\afunc \circ \Bsh{\bfunc}^{-1}-\bfunc \circ \Bsh{\bfunc}^{-1}} = 
\Bsh{(\afunc-\bfunc) \circ \Bsh{\bfunc}^{-1}}.
\end{equation}
Then the following statements are equivalent:
$$
\begin{array}{lcl}
\text{(A)}~\jo{l}(\afunc)=\jo{l}(\bfunc),  & \qquad &
 \text{(c)}~\jo{\dg+l}(\Bsh{(\afunc-\bfunc) \circ \Bsh{\bfunc}^{-1}})=\jo{\dg+l}(\id), \\
\text{(a)}~\jo{l}(\afunc-\bfunc)=0, & &
  \text{(d)}~\jo{\dg+l}(\Bsh{\afunc}\circ\Bsh{\bfunc}^{-1})=\jo{\dg+l}(\id), \\
\text{(b)}~\jo{l}((\afunc-\bfunc) \circ \Bsh{\bfunc}^{-1})=0, & &
\text{(B)}~\jo{\dg+l}(\Bsh{\afunc}) = \jo{\dg+l}(\Bsh{\bfunc}).
\end{array}
$$
The equivalence of (A), (a), and (b) is trivial, (b)$\Leftrightarrow$(c) holds by~\eqref{equ:pljet_Fa},
(c)$\Leftrightarrow$(d) by~\eqref{equ:Fa_Fb_1}, and (d)$\Leftrightarrow$(B) by Lemma~\ref{lm:rel_between_jets}.

(A)$\Leftrightarrow$(C)
Recall that by~\eqref{equ:Bsh_hBsh_1} $\hBshh{\afunc} = \Bsh{\afunc\circ\difM^{-1}} \circ \difM$.
Then the following conditions are equivalent:
$$
\begin{array}{lcl}
\text{(C)}~\jo{\dg+l}(\hBshh{\afunc})=\jo{\dg+l}(\hBshh{\bfunc}),  & \qquad & 
  \text{(f)}~\jo{l}(\afunc\circ\difM^{-1})=\jo{l}(\bfunc\circ\difM^{-1}), \\
\text{(e)}~\jo{\dg+l}(\Bsh{\afunc\circ\difM^{-1}})=\jo{\dg+l}(\Bsh{\bfunc\circ\difM^{-1}}), & & 
\text{(A)}~\jo{l}(\afunc)=\jo{l}(\bfunc).
\end{array}
$$
The equivalence (C)$\Leftrightarrow$(e) holds by~\eqref{equ:Bsh_hBsh_1}, (e)$\Leftrightarrow$(f) by the equivalence (B)$\Leftrightarrow$(A) which is already proved, and (f)$\Leftrightarrow$(A) by Lemma~\ref{lm:rel_between_jets}.

(2) (D)$\Rightarrow$(E).
Suppose that $\jo{k}(\difM) = \jo{k}(\Bsh{\afunc})$.
Then 
\begin{gather}
\label{equ:j1}
\jo{k}(\Bsh{\afunc}^{-1} \circ \difM)=\jo{k}(\id),  \\
\label{equ:j2}
\jo{k}(\afunc)=\jo{k}(\afunc\circ \Bsh{\afunc}^{-1} \circ\difM).
\end{gather}
Notice also that 
\begin{equation}\label{equ:j3}
\Bsh{\afunc}^{-1} \circ \difM 
\stackrel{\eqref{equ:Bsh_hBsh_0}}{=\!=\!=}
\Bsh{-\afunc\circ \Bsh{\afunc}^{-1}}\circ \difM =
\hBshh{-\afunc\circ \Bsh{\afunc}^{-1} \circ\difM}\,.
\end{equation}
Hence 
$$
\jo{k}(\hBshh{-\afunc})
    \stackrel{\eqref{equ:j2}}{=\!=\!=}
\jo{k}(\hBshh{-\afunc\circ \Bsh{\afunc}^{-1} \circ\difM})
    \stackrel{\eqref{equ:j3}}{=\!=\!=}
\jo{k}(\Bsh{\afunc}^{-1} \circ \difM)
    \stackrel{\eqref{equ:j1}}{=\!=\!=}
\jo{k}(\id).
$$

(E)$\Rightarrow$(D).
It is easy to verify that
\begin{equation}\label{equ:j4}
\Bsh{\afunc\circ\hBshh{-\afunc}^{-1}} \circ \hBshh{-\afunc} = \difM.
\end{equation}
This identity simply means that $\BFlow\bigl(\BFlow(\difM(x),-\afunc(x)),\afunc(x)\bigr)=\difM(x)$.
Suppose that $\jo{k}(\hBshh{-\afunc})=\jo{k}(\id)$.
Then 
\begin{equation}\label{equ:j5}
\jo{k}(\afunc\circ\hBshh{-\afunc})=\jo{k}(\afunc), 
\end{equation}
whence
$$
\jo{k}(\difM) 
    \stackrel{\eqref{equ:j4} \,\&\, (E)}{=\!=\!=\!=\!=\!=\!=}
\jo{k}(\Bsh{\afunc\circ\hBshh{-\afunc}^{-1}})
    \stackrel{\eqref{equ:j5} \,\&\, \eqref{equ:pljet_Fa}}{=\!=\!=\!=\!=\!=\!=}
\jo{k}(\Bsh{\afunc}). 
$$
 Corollary~\ref{cor:rel_betw_shifts} is proved.
\end{proof}

\subsection{Proof of Theorem~\ref{th:grp_subset_JSh}}\label{sect:th-main-equiv}
Let $\BFld$ be a vector field defined on some neighbourhood of the origin in $\RRR^{n}$ and  $\grp$ be a subgroup of $\DiffRno$ satisfying \coA-\coC.
We have to show that $\jo{\infty}\bigl(\gimShBO\bigr)=\jo{\infty}(\grp)$.
Due to \coA\ it remains to verify that $\jo{\infty}\bigl(\gimShBO\bigr) \supset\jo{\infty}(\grp)$.

Let $\difM\in\grp$.
We will find a germ of a smooth function $\afunc\in\FuncRno$ such that $\jo{\infty}(\hBshh{-\afunc})=\jo{\infty}(\id)$.
Then it will follow from (2) of Corollary~\ref{cor:rel_betw_shifts} that $\jo{\infty}(\hBsh{\afunc})=\jo{\infty}(\difM)$, i.e. $\jo{\infty}(\difM)\in\jo{\infty}(\gimShBO)$.

Since $\grp$ is a group, it follows from \coA\ and~\eqref{equ:Bsh_hBsh_1} that 
\begin{equation}\label{equ:Fah_in_G}
\hBshh{\afunc} = \Bsh{\afunc\circ\difM^{-1}}\circ \difM \;\in\; \grp,
\qquad \forall \afunc\in\FuncRno. 
\end{equation}
Put $\difM_0=\difM$.
Then by \coB\ there exists $\omega_0\in\RRR$ such that $\jequ{\difM_0}{\Bsh{\omega_0}}{\dg}$.
Denote
$$
\difM_1(x) = \qBsh{\difM}{-\omega_0}(x) = \BFlow(\difM(x),-\omega_0) =\BFlow_{-\omega_0} \circ \difM(x).
$$
Then $\difM_1\in\grp$ and by (2) of Corollary~\ref{cor:rel_betw_shifts} $\jequ{\difM_1}{\id_{\RRR^{n}}}{\dg}.$

Therefore by \coC\ there exists a homogeneous polynomial $\omega_1$ of degree $1$ such that $\jequ{\difM_1}{\Bsh{\omega_1}}{\dg + 1}$, where 
$\Bsh{\omega_1}(x) = \BFlow(x,\omega_1(x))$.
Denote 
$$
\begin{array}{rcl}
\difM_2(x)  =  \qBsh{\difM_1}{-\omega_1}(x) & = & \BFlow(\difM_1(x), -\omega_1(x)) = \\ [1.5mm]
& = & \BFlow(\BFlow(\difM_0(x), -\omega_0), -\omega_1(x))=  \\ [1.5mm]
& = & \BFlow(\difM(x), -\omega_0-\omega_1(x)) = \qBsh{\difM}{-\omega_0-\omega_1}(x).
\end{array}
$$
Then by~\eqref{equ:Fah_in_G} $\difM_2\in\grp$ and by (2) of Corollary~\ref{cor:rel_betw_shifts} $\jequ{\difM_2}{\id}{\dg + 1}$.
Therefore we can again apply \coC\ to $\difM_2$ and so on.
Using induction we will construct a sequence of homogeneous polynomials $\{\omega_l\}_{l=0}^{\infty}$, $(\deg\omega_l=l)$, and a sequence $\{\difM_l\}$ in $\grp$ such that for every $l\geq0$, we have that 
\begin{gather}
\label{equ:jets_of_hl}
\jequ{\difM_{l}}{\id}{\dg+l-1 }, \qquad \qquad
\jequ{\difM_l}{\Bsh{\omega_l}}{\dg + l},
\\
\label{equ:hlplus1}
\difM_{l+1}(x) = 
\qBsh{\difM_l}{-\omega_l}(x)   = 
\BFlow\bigl(\difM(x),-\sum\limits_{i=0}^{l}\omega_i(x)\bigr).
\end{gather}

Put $\tau=\sum\limits_{l=0}^{\infty}\omega_l(x)$.
Then by a well-known theorem of E.~Borel, see also \S\ref{sect:Borel_th}, there exists a smooth function $\afunc:\RRR^n\to\RRR$ whose $\infty$-jet at $\orig$ coincides with $\tau$.

We claim that $\jo{\infty}(\hBshh{-\afunc})=\jo{\infty}(\id)$.
Evidently, it suffices to show that 
$\jo{\dg+l}(\hBshh{-\afunc})=\jo{\dg+l}(\id)$
for arbitrary large $l$.

For $l\geq0$ put $\afunc_{l} = \sum\limits_{i=0}^{l}\omega_i$ and $\afunc_{>l} = \afunc - \afunc_{l}$. 
Then
$$
\begin{array}{rcl}
\hBshh{-\afunc}(x) & = & \BFlow(\difM(x),-\afunc_{l}(x)-\afunc_{> l}(x)) = \\ [1.5mm]
& = & \BFlow\bigl(\BFlow(\difM(x),-\afunc_{l}(x)), -\afunc_{> l}(x)\bigr) = \\ [1.5mm]
& = & \BFlow(\difM_{l+1}(x), -\afunc_{> l}(x)) = 
\qBsh{\difM_{l+1}}{-\afunc_{> l}}.
\end{array}
$$
Notice that by~\eqref{equ:jets_of_hl} $\jo{\dg+l}(\difM_{l+1})=\jo{\dg+l}(\id)$.
Moreover, since $\jo{l}(\afunc_{>l})=0$, it follows from Lemma~\ref{lm:init-terms-shifts} that 
$\jo{\dg+l}(\Bsh{\afunc_{> l}})=\jo{\dg+l}(\id)$ for $l\geq1$.
Thus 
$\jo{\dg+l}(\difM_{l+1})= \jo{\dg+l}(\Bsh{\afunc_{> l}}) = \jo{\dg+l}(\id).$
Then by (2) of Corollary~\ref{cor:rel_betw_shifts}
$\jo{\dg+l} (\hBshh{-\afunc}) =\jo{\dg+l}(\qBsh{\difM_{l+1}}{-\afunc_{> l}}) =\jo{\dg+l} (\id)$.
\qed

\section{Borel's theorem}\label{sect:Borel_th}
In this section we present a variant of a well-known theorem of E.~Borel.
It will be used in the next section for the construction of $\jo{\infty}$-sections of a shift map.

Let $\Vman$ be an open subset of $\RRR^n$ and $$f=(f_1,\ldots,f_m):\Vman\to\RRR^m$$ be a smooth mapping.
For every compact $K\subset \Vman$ and $r\geq 0$ define the \emph{$r$-norm} of $f$ on $K$ by
$\|f\|^{r}_{K} =\sum\limits_{j=1}^{m} \sum\limits_{|i|\leq r} \sup\limits_{x\in K} |D^{i}f_j(x)|,$
where $i=(i_1,\ldots,i_n)$, $|i|=i_1+\cdots+i_n$, and
$D^i = \frac{\partial^{|i|}}{\partial x_1^{i_1}\cdots\partial x_{n}^{i_n}}.$
For a fixed $r$ the norms $\|f\|^{r}_{K}$, where $K$ runs over all compact subsets of $\Vman$, define the \emph{weak Whitney $C^{r}_{W}$ topology\/} on $C^{\infty}(\Vman,\RRR^m)$.
A \emph{$C^{\infty}_{W}$-topology\/} on $C^{\infty}(\Vman,\RRR^m)$ is generated by $C^{r}_{W}$-topologies for all finite $r\geq0$.

For every $i\geq0$ denote by $\Poly{i}$ the space of real homogeneous polynomials in $n$ variables $x_1,\ldots,x_n$ of degree $i$.
Associating to every $\omega\in\Poly{i}$ its coefficients we can identify $\Poly{i}$ with $\RRR^{C^{i-1}_{n+i-1}}$, where $C^{i-1}_{n+i-1}=\frac{(n+i-1)!}{n!\,(i-1)!}$.
On the other hand $\Poly{i} \subset C^{\infty}(\RRR^n,\RRR)$. 
It is easy to show that every of $C^{r}_{W}$-topologies on $\Poly{i}$ induced from $C^{\infty}(\RRR^n,\RRR)$ coincides with the Euclidean one.

Let $A,B$ be smooth manifolds and $\XX \subset C^{\infty}(A,B)$ be a subset.
We will say that a map $\lambda:\XX\to\Poly{i}$ is $C^{r}_{W}$-continuous provided it is continuous from $C^{r}_{W}$-topology of $\XX$ to the Euclidean topology of $\Poly{i}$.

\begin{theorem}\label{th:Borel}
Let $A,B$ be smooth manifolds, $\XX \subset C^{\infty}(A,B)$ a subset, and $\Asubm\subset \RRR^n$ an open neighbourhood of $\orig\in \RRR^n$.
Suppose that for every $i\geq0$ we are given a number $s_i\geq0$ and a $C^{s_i}_{W}$-continuous map $$\lambda_{i}:\XX \to \Poly{i}.$$ 
Thus we can define the following mapping
$$\lambda: \XX \to \RRR[[x_1,\ldots,x_n]],\qquad \lambda(\afunc) = \sum_{i=0}^{\infty}\lambda_{i}(\afunc).$$
Then there exists a $C^{\infty,\infty}_{W,W}$-continuous map
$\SCT: \XX \to C^{\infty}(\Asubm,\RRR)$ such that $\infty$-jet of $\SCT(\afunc)$ at $\orig$ coincides with $\lambda(\afunc)$ for every $\afunc\in \XX$.

Moreover if every $\lambda_i$ preserves smoothness (in the sense of Definition~\ref{defn:pres_smoothness}), then so does $\SCT$.
\end{theorem}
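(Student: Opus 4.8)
The plan is to adapt the classical argument of E.~Borel, letting the cut\nobreakdash-off scales depend on $\afunc$. Fix a smooth $\mu:\RRR^n\to[0,1]$ with $\mu\equiv1$ near $\orig$ and $\supp\mu$ in the unit ball, and a number $\delta>0$ so small that $\{\,|x|<\delta\,\}\subset\Asubm$. For $\afunc\in\XX$ and $i\ge0$ put $\theta_i(\afunc)=\delta\,2^{-i}\bigl(1+\|\lambda_i(\afunc)\|^{2}\bigr)^{-1}$, where $\|\cdot\|$ is the Euclidean norm on the coefficients of the finite-dimensional space $\Poly{i}$, and define
$$\SCT(\afunc)(x)=\sum_{i=0}^{\infty}\lambda_i(\afunc)(x)\,\mu\bigl(x/\theta_i(\afunc)\bigr).$$
Every summand is a smooth germ supported in $\{\,|x|\le\theta_i(\afunc)\le\delta 2^{-i}\,\}\subset\Asubm$, so the sum makes formal sense; the first step is to show the series converges in $C^{\infty}(\Asubm,\RRR)$ and to identify its jet.

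On the support of the $i$-th summand $|x|\le\theta_i(\afunc)$, and since $\lambda_i(\afunc)$ is homogeneous of degree $i$, the Leibniz rule --- each derivative of $\mu(\cdot/\theta_i(\afunc))$ produces a factor $\theta_i(\afunc)^{-1}$ while $x/\theta_i(\afunc)$ stays bounded, homogeneity of degree $i$ supplies compensating powers of $\theta_i(\afunc)$, and $\theta_i(\afunc)\bigl(1+\|\lambda_i(\afunc)\|^{2}\bigr)\le\delta 2^{-i}$ --- gives, for every $r\ge0$ and every fixed compact $K$,
$$\bigl\|\,\lambda_i(\afunc)\cdot\mu(\cdot/\theta_i(\afunc))\,\bigr\|^{r}_{K}\le c(r,n)\,2^{-i}\qquad(i>r),$$
with $c(r,n)$ independent of $\afunc$. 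Hence the series converges in every $C^{r}_{W}$, so $\SCT(\afunc)\in C^{\infty}(\Asubm,\RRR)$. Near $\orig$ the $i$-th summand equals $\lambda_i(\afunc)(x)$ (there $\mu\equiv1$); thus the $k$-jet of the partial sum $\sum_{i\le k}$ is $\sum_{i\le k}\lambda_i(\afunc)=\jo{k}(\lambda(\afunc))$, the $k$-jet of every further summand vanishes (near $\orig$ it is a homogeneous polynomial of degree $>k$), and since the tail converges in $C^{k}_{W}$ we obtain $\jo{k}(\SCT(\afunc))=\jo{k}(\lambda(\afunc))$ for all $k$, i.e. $\jo{\infty}(\SCT(\afunc))=\lambda(\afunc)$. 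Finally, the displayed bound is uniform in $\afunc$, so $C^{\infty,\infty}_{W,W}$-continuity of $\SCT$ reduces to continuity of each finite partial sum; this holds because each $\afunc\mapsto\lambda_i(\afunc)$ is $C^{s_i}_{W}$-continuous (hence continuous for the finer $C^{\infty}_{W}$ topology) into $\Poly{i}$ with its Euclidean --- equivalently, induced $C^{r}_{W}$ --- topology, $\afunc\mapsto\theta_i(\afunc)$ is continuous and positive, and $(\omega,\theta)\mapsto\bigl(x\mapsto\omega(x)\mu(x/\theta)\bigr)$ is continuous into every $C^{r}_{W}$.

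I expect the ``moreover'' assertion, preservation of smoothness, to be the real obstacle. Let $H:A\times\RRR^m\to B$ be smooth with $H_p\in\XX$ for all $p$; one must show $(x,p)\mapsto\SCT(H_p)(x)$ is $C^{\infty}$ on $\Asubm\times\RRR^m$. Since each $\lambda_i$ preserves smoothness (in the sense of Definition~\ref{defn:pres_smoothness}), $(x,p)\mapsto\lambda_i(H_p)(x)$ is $C^{\infty}$, and as $\theta_i$ is a smooth positive function of the coefficients of $\lambda_i(\afunc)$, the map $(x,p)\mapsto\mu\bigl(x/\theta_i(H_p)\bigr)$ is $C^{\infty}$ too; hence each summand is $C^{\infty}$ in $(x,p)$. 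Away from $\{x=\orig\}$ the sum is even locally finite (on a compact set on which $|x|$ is bounded below, all summands with $\delta 2^{-i}$ below that bound vanish), so $\SCT(H)$ is $C^{\infty}$ there. The difficulty is joint smoothness across $\{x=\orig\}$: one needs $\sum_i\lambda_i(H_p)(x)\mu(x/\theta_i(H_p))$ to converge in $C^{\infty}$ on compacta of $\Asubm\times\RRR^m$, i.e. the estimate above must be upgraded to control derivatives in $p$ as well. This cannot be achieved by letting the scales depend on $\lambda_i(\afunc)$ alone; the natural remedy is to shrink $\theta_i(\afunc)$ further using higher-order $C^{r}_{W}$-seminorms of $\afunc$ itself on an exhausting family of compacta, and then to run a two-variable Leibniz estimate in which the factor $\theta_i(\afunc)^{\,i}$ produced by homogeneity is made to dominate the polynomially-in-$i$ combinatorial factors. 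Arranging this --- choosing the extra smallness so that $\theta_i$ still depends continuously and smoothness-preservingly on $\afunc$ while the $p$-derivatives of the $i$-th summand become summable --- is the delicate point; once it is settled, the jet identity and the continuity established above carry over verbatim.
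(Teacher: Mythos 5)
Your proof and the paper's diverge on a key point: you let the cut-off scale $\theta_i(\afunc)$ shrink with $\|\lambda_i(\afunc)\|$ (the classical Borel device), while the paper fixes cut-off functions $\rho_i$ independent of $\afunc$ and postulates in its property (c) a tail estimate $\|\BB(\sum_{i>s}\omega_i)\|^{r}_{K}<\eps$ with $s$ independent of the formal series. For the main claim your route is the right one: with $\rho_i$ fixed and $\BB$ linear on each $\Poly{i}$, the quantity $\|\omega\rho_i\|^{r}_{K}$ scales linearly in $\|\omega\|$, so no choice of $\rho_i$ makes the paper's (c) uniform over all $\tau$. Your estimate $\|\lambda_i(\afunc)\,\mu(\cdot/\theta_i(\afunc))\|^{r}_{K}\le c(r,n)\,2^{-i}$, uniform in $\afunc$, supplies exactly what is missing; the three-$\eps$ decomposition into a continuous finite part plus a uniformly small tail then runs as in the paper's proof, and the jet identity is as you argue. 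Up through $\contWW{\infty}{\infty}$-continuity the proposal is correct, and in fact more careful than the paper's.

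The gap is the ``moreover'' clause, and it is genuine, not a loose end to be filled in. Once $\theta_i$ depends on $\afunc$, the $p$-derivatives of the $i$-th summand are governed by $\partial_p\lambda_i(H_p)$, which is controlled neither by $\|\lambda_i(H_p)\|$ nor by $C^{r}_{W}$-seminorms of $H_p$ alone. A concrete failure: take $n=1$, $A=B=\RRR$, $\XX=C^{\infty}(\RRR,\RRR)$, $\lambda_i(\afunc)=\afunc(i)\,x^{i}$ (each $\lambda_i$ is $C^{0}_{W}$-continuous and preserves smoothness), and $H(y,p)=g(y)\,p$. Then $\lambda_i(H_0)=0$, so $\partial_p\SCT(H_p)(x)\big|_{p=0}=\sum_{i}g(i)\,x^{i}\mu\bigl(x/\theta_i(H_0)\bigr)$, and with your $\theta_i(H_0)=\delta 2^{-i}$ the value at $x=\delta 2^{-N}$ contains the term $g(N-1)\,\delta^{N-1}2^{-N(N-1)}$, which for $g(i)=2^{2i^2}$ tends to $\infty$ with $N$; so $\partial_p\SCT(H_p)$ is unbounded near $(\orig,0)$ and $\SCT(H)$ fails to be even $C^{1}$. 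Notice this computation involves only the values $\theta_i(H_0)=\theta_i(0)>0$, so replacing $\theta_i(\afunc)$ by any other positive function of $\afunc$ --- in particular by one built from seminorms of $\afunc$, as your closing paragraph suggests --- leaves it untouched: shrinking $\theta_i$ cannot rescue preservation of smoothness for a cut-off construction of this form. The paper sidesteps this by fixing $\rho_i$, which makes each summand $\rho_i(x)\lambda_i(H_p)(x)$ trivially $C^{\infty}$ in $(x,p)$, but that is exactly the choice that destroys the uniform tail estimate. The two requirements are in genuine tension, and as written your argument proves existence and $\contWW{\infty}{\infty}$-continuity of $\SCT$ but not preservation of smoothness.
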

\begin{proof}
The proof uses a theorem of E.~Borel claiming that there exists a map $\BB:\RRR[[x_1,\ldots,x_n]]\to C^{\infty}(\Asubm,\RRR)$ such that for every formal series $\tau\in\RRR[[x_1,\ldots,x_n]]$ the Taylor series of the function $\BB(\tau)$ at $\orig$ coincides with $\tau$, see e.g.~\cite{GolubitskyGuillemin}.

Such a map $\BB$ can be assumed to have the following properties (a)-(c) below.
Let $\tau=\mathop\sum\limits_{i=0}^{\infty}\omega_i$ be a formal series, where $\omega_i\in\Poly{i}$.
Then 
\begin{enumerate}
\item[(a)] 
for every $i\geq0$ the restriction $\BB|_{\Poly{i}}: \Poly{i} \to C^{\infty}(\Asubm,\RRR)$ is continuous from the Euclidean topology of $\Poly{i}$ to $C^{\infty}_{W}$-topology of $C^{\infty}(\Asubm,\RRR)$;
\item[(b)] 
$\BB(\mathop\sum\limits_{i=0}^{\infty}\omega_i) = \mathop\sum\limits_{i=0}^{\infty}\BB(\omega_i)$;
\smallskip
 \item[(c)]
   for every compact $K\subset\Vman$, $\eps>0$, and an integer number $r\geq0$ there exists $s=s(K,\eps,r)\geq0$ which does not depend on $\tau$ such that
$$\left\|\BB\left(\mathop\sum\limits_{i=s+1}^{\infty}\omega_i\right)\right\|^{r}_{K} < \eps.$$
\end{enumerate}

Indeed, for every $i\geq0$ let $\rho_i:\RRR^n\to\RRR$ be a smooth function supported in a sufficiently small neighbourhood $\Vman_i\subset V$ of $\orig$ and equal to $1$ in a smaller neighbourhood of $0$.
Put $\BB(\omega)=\omega \rho_i$ for all $\omega\in\Poly{i}$. 
Then (a) holds true.

Property (b) is just the definition of $\BB$ on all of $\RRR[[x_1,\ldots,x_n]]$.

Finally, (c) can be reached if the supports of $\rho_i$ decrease sufficiently fast when $i$ tends to $\infty$, see for details e.g.~\cite{GolubitskyGuillemin}.

\medskip 

Assuming that $\BB$ has properties (a)-(c) we will now show that the following map
\begin{equation}\label{equ:Lambda}
\begin{array}{c}
\SCT = \BB \circ \lambda: \XX \xrightarrow{~\lambda~} \RRR[[x_1,\ldots,x_n]]  \xrightarrow{~\BB~}C^{\infty}(\Asubm,\RRR), \\ [2mm]
\SCT(q)(x) = \sum\limits_{i=0}^{\infty} \rho_i(x) \lambda_i(q)(x), \qquad q\in \XX, \ x\in\Vman,
\end{array}
\end{equation}
satisfies the statement of our theorem.

Indeed, for each $\afunc\in\XX$ the Taylor series of $\SCT(\afunc) =\BB\circ\lambda(\afunc)$ at $\orig$ coincides with $\lambda(\afunc)$.

Let us verify $\contWW{\infty}{\infty}$-continuity of $\SCT$ at $\afunc$.
It suffices to prove that for every compact subset $K\subset\Vman$, $\eps>0$, and $r\geq0$ there exist $c=c(K,\eps,r)\geq0$ which does not depend on $\afunc$, and a $C^{c}_{W}$-neighbourhood $\Nbh_{\afunc}$ of $\afunc$ in $\XX$ such that $\|\SCT(\afunc)-\SCT(\bfunc)\|^{r}_{K}<\eps$ for all $\bfunc\in\Nbh_{\afunc}$.

For every $s\geq0$ define the following two maps:
$$\SCT_{s}, \SCT_{>s}:\XX \to C^{\infty}(\Asubm,\RRR),$$
$$
\SCT_{s}(\afunc) = \BB \circ \mathop\oplus\limits_{i=0}^{s} \lambda_i(\afunc), \qquad
\SCT_{>s}= \SCT - \SCT_{s}.
$$
Then 
\begin{equation}\label{equ:Lambda_est}
\|\SCT(\afunc)-\SCT(\bfunc)\|^{r}_{K} \leq
\|\SCT_{s}(\afunc)-\SCT_{s}(\bfunc)\|^{r}_{K}+
\|\SCT_{>s}(\afunc)\|^{r}_{K} +
\|\SCT_{>s}(\bfunc)\|^{r}_{K}.
\end{equation}

For every $s\geq0$ let $c(s)=\max\{s_i\}_{i=0}^{s}$.
Then it follows from (a) and assumptions about continuity of $\lambda_i$ that $\SCT_{s}$ is $\contWW{c(s)}{\infty}$-continuous.
Hence $\SCT_{s}$ is $\contWW{c(s)}{\rsm}$-continuous for every $\rsm\geq0$.

Fix $\eps>0$, $r\geq0$, and a compact subset $K\subset \Vman$.
Then by (c) there exists $s>0$ which does not depend on $\afunc$ and $\bfunc$ such that each of the last two terms in~\eqref{equ:Lambda_est} is less that $\frac{\eps}{3}$.
Moreover it follows from $\contWW{c(s)}{\rsm}$-continuity of $\SCT_{s}$ that there exists a $C^{c(s)}_{W}$-neighbourhood $\Nbh_{\afunc}$ of $\afunc$ in $\XX$ such that for every $\bfunc\in\Nbh_{\afunc}$ the first term on the right side of~\eqref{equ:Lambda_est} is less than $\frac{\eps}{3}$ as well.
Then $\|\SCT(\afunc)-\SCT(\bfunc)\|^{r}_{K} < \eps$.

Suppose now that every $\lambda_i$ preserves smoothness.
Let $q:A\times\RRR^k\to B$ be a $C^{\infty}$ map such that $q_t\in \XX$ for every $t\in\RRR^k$.
Then for every $i\geq0$ the following map
$$
\lambda_i(q):\Vman\times\RRR^{k}\to\RRR,
\qquad
\lambda_i(q)(x,t)=\lambda_i(q_t)(x)
$$
is also $C^{\infty}$.
Therefore it follows from~\eqref{equ:Lambda} that the following map
$\SCT(q):\Vman\times\RRR^{k}\to\RRR$ defined by 
$$
\SCT(q)(x,t)=\SCT(q_t)(x)=
\sum_{i=0}^{\infty} \rho_i(x) \lambda_i(q_t)(x)
$$
is $C^{\infty}$ as well.
Hence $\SCT$ preserves smoothness.
\end{proof}

\section{$\jo{\infty}$-sections of the shift map}\label{sect:cont-corresp}
\subsection{Notation.}
Let $\Vman$ be an open neighbourhood of $\orig$ in $\RRR^n$, $\BFld$ be a vector field on $\Vman$ such that $\BFld(\orig)=0$, $\BFlow:\Vman\times\RRR\supset\BDom \to\RRR^n$ be the local flow of $\BFld$, $\funcBV$ be the subset of $C^{\infty}(\Vman,\RRR)$ consisting of functions $\afunc$ whose graph is contained in $\BDom$, and 
$$
\Shift: \funcBV \to C^{\infty}(\Vman,\RRR^n), \qquad
\Shift(\afunc)(z) = \Bsh{\afunc}(x) = \BFlow(z,\afunc(z))
$$
be the corresponding shift map of $\BFld$.
Denote its image by $\imShBV$.

For every $k\geq 0$ and $\difM\in C^{\infty}(\Vman,\RRR^n)$ let $\jo{k}(\difM)$ be the $k$-jet of $\difM$ at $\orig$.

Let also $\JShBV \subset C^{\infty}(\Vman,\RRR^{n})$ be the subset consisting of maps $\difM$ for which there exists a smooth function $\afunc_{\difM}\in C^{\infty}(\Vman,\RRR)$ such that $\jo{\infty}(\difM)=\jo{\infty}(\Bsh{\afunc_{\difM}})$, c.f.~\eqref{equ:gJShBO}.

In this section we will show how to choose $\afunc_{\difM}$ so that the correspondence $\difM\mapsto\afunc_{\difM}$ becomes a continuous and preserving smoothness map.
Such a map will be called a $\jo{\infty}$-section of $\ShBV$.

\begin{definition}\label{def:j-section}
Let $\XX\subset \JShBV$ be a subset.
We will say that a mapping $\JS:\XX \to C^{\infty}(\Vman,\RRR)$ is a {\bfseries $\jo{\infty}$-section\/} of $\ShBV$ on $\XX$ provided $\JS$ is $C^{\infty,\infty}_{W,W}$-continuous, preserves smoothness, and
\begin{equation}\label{equ:jih_jiFLh}
\jo{\infty}(\difM)=\jo{\infty}(\ShBV\circ \JS(\difM)), \qquad \forall \difM\in\XX.
\end{equation}
If $\XX=\JShBV$ then $\JS$ will be called a {\bfseries global} $\jo{\infty}$-section of $\ShBV$.
\end{definition}

Recall that we use the following notation
$$\ShBV\circ \JS(\difM)(x) = \Bsh{\JS(\difM)}(x) = \BFlow(x,\JS(\difM)(x)),$$
$$\hBshh{-\JS(\difM)}(x)=\BFlow(\difM(x),-\JS(\difM)(x)).$$
Then by (2) of Corollary~\ref{cor:rel_betw_shifts} relation~\eqref{equ:jih_jiFLh} is equivalent to each of the following conditions
\begin{equation}\label{equ:equiv-jinfty-sect}
\jo{\infty}(\difM)=\jo{\infty}(\Bsh{\JS(\difM)})
\qquad\Leftrightarrow\qquad
\jo{\infty}(\hBshh{-\JS(\difM)})=\jo{\infty}(\id).
\end{equation}

Our main result proves existence of $\jo{\infty}$-sections on certain subspaces of $\JShBV$, see Theorem~\ref{th:cont-estimations}.
Before formulating this theorem, let us show how $\jo{\infty}$-sections can be used for constructing real sections of $\ShBV$.

\subsection{Applications of $\jo{\infty}$-sections.}\label{sect:appl_jinf_sect}
Denote by $\EBVi \subset \EBV$ the subset consisting of maps $\difM$ such that $\jo{\infty}(\difM)=\jo{\infty}(\id)$.
Since $\jo{\infty}(\difM)=\jo{\infty}(\Bsh{0})$, we obtain that $$\EBVi \;\subset\; \EBV\cap\JShBV.$$

Let $\XX\subset\EBV\cap\JShBV$ be a subset.
Thus every $\difM\in\XX$ is an orbit preserving map $\Vman\to\RRR^{n}$ being a diffeomorphism at every singular point $z\in\singB$ and such that $\jo{\infty}(\difM)=\jo{\infty}(\Bsh{\afunc_{\difM}})$ for some $\afunc_{\difM}\in C^{\infty}(\Vman,\RRR)$.

Suppose that there exists a $\jo{\infty}$-section $\JS:\XX\to C^{\infty}(\Vman,\RRR)$ of $\ShBV$.
Then by~\eqref{equ:equiv-jinfty-sect} $\hBshh{-\JS(\difM)}\in\EBVi$ for every $\difM\in\XX$, whence the following map is well defined:
\begin{equation}\label{equ:EJ_map}
\EJ:\XX \to \EBVi,
\qquad
\EJ(\difM)=\hBshh{-\JS(\difM)}.
\end{equation}
\begin{lemma}\label{lm:inf_sect_alm_sect}{\rm c.f.~\cite[Pr.~3.4]{Maks:Hamvf:2006}}
Suppose also that there exists a $C^{\infty,\infty}_{W,W}$-continuous and preserving smoothness {\bfseries section\/} $$\SI:\EBVi\to C^{\infty}(\Vman,\RRR)$$ of $\ShBV$, i.e. $\difM(x)=\BFlow(x,\SI(\difM)(x))$ for all $\difM\in\EBVi$.
Then the following map $\RS:\XX \to C^{\infty}(\Vman,\RRR)$ defined by
$$
\RS(\difM)=
\JS(\difM) + \SI\circ\EJ(\difM) = 
\JS(\difM) + \SI(\hBshh{-\JS(\difM)}).
$$
is a section of $\ShBV$ defined on all of $\XX$.
\end{lemma}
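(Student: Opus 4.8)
The plan is to check, for each $\difM\in\XX$: (i) that $\RS(\difM)$ is a well-defined element of $\funcBV$ — this is what ``defined on all of $\XX$'' asks for — and (ii) that $\ShBV(\RS(\difM))=\difM$; and, if the stronger form needed to feed Lemma~\ref{lm:psm_splc} is wanted, also (iii) that $\RS$ is $\contWW{\infty}{\infty}$-continuous and preserves smoothness. Point (i) is almost immediate: the paragraph before the lemma already records that $\EJ(\difM)=\hBshh{-\JS(\difM)}$ lies in $\EBVi$ for every $\difM\in\XX$ — orbit preservation is inherited from $\difM$ since the flow keeps points on their orbits; at each $z\in\singB$ the germ is a diffeomorphism because there $\hBshh{-\JS(\difM)}=\Bsh{-\JS(\difM)\circ\difM^{-1}}\circ\difM$ by~\eqref{equ:Bsh_hBsh_1} is a composition of germs of diffeomorphisms (Lemma~\ref{lm:shift-types}(b)); and $\jo{\infty}(\hBshh{-\JS(\difM)})=\jo{\infty}(\id)$ because $\JS$ is a $\jo{\infty}$-section, via~\eqref{equ:equiv-jinfty-sect}. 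Hence $\SI$ applies to $\EJ(\difM)$ and $\RS(\difM)=\JS(\difM)+\SI(\EJ(\difM))$ is a well-defined germ at $\orig$; shrinking $\Vman$ (possible as $\BFld(\orig)=0$, Lemma~\ref{lm:shift-types}(a)) puts its graph in $\BDom$.

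The heart of the argument is (ii), and it is a one-line flow computation — the pointwise content of identity~\eqref{equ:j4}. Write $\afunc=\JS(\difM)$ and $\bfunc=\SI(\EJ(\difM))=\SI(\hBshh{-\afunc})$, so that $\RS(\difM)=\afunc+\bfunc=\bfunc+\afunc$. Since $\SI$ is a section of $\ShBV$ on $\EBVi$, evaluating it at $\EJ(\difM)=\hBshh{-\afunc}$ gives $\BFlow(x,\bfunc(x))=\hBshh{-\afunc}(x)=\BFlow(\difM(x),-\afunc(x))$ for all $x$ near $\orig$. Hence, using the flow identity $\BFlow(\BFlow(y,s),t)=\BFlow(y,s+t)$ (legitimate near $\orig$ by Lemma~\ref{lm:shift-types}(a)),
$$
\BFlow(x,\RS(\difM)(x))=\BFlow\bigl(\BFlow(x,\bfunc(x)),\afunc(x)\bigr)=\BFlow\bigl(\BFlow(\difM(x),-\afunc(x)),\afunc(x)\bigr)=\difM(x),
$$
i.e.\ $\ShBV(\RS(\difM))=\difM$; since $\difM\in\XX$ was arbitrary, $\RS$ is a section of $\ShBV$ on all of $\XX$.

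For (iii) I would assemble $\RS$ from pieces each of which is $\contWW{\infty}{\infty}$-continuous and preserves smoothness: $\JS$ and $\SI$ by hypothesis; addition in $C^{\infty}(\Vman,\RRR)$ trivially; and the map $\difM\mapsto\hBshh{-\JS(\difM)}$, which factors through $\JS$ and through $(\difM,\bfunc)\mapsto[x\mapsto\BFlow(\difM(x),\bfunc(x))]$, the latter being $\contWW{r}{r}$-continuous and smoothness-preserving by the same estimate that handles the shift map (\cite[Lemma~2]{Maks:TA:2003}). I expect (ii) to be completely routine; the only steps that use anything beyond formal bookkeeping are the local-diffeomorphism-at-$\singB$ verification entering (i) and, if (iii) is required, the continuity of $\difM\mapsto\hBshh{-\JS(\difM)}$ — both standard variants of arguments already available. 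So the real ``obstacle'' is just keeping the domains and the three constituent maps straight rather than any genuine difficulty.
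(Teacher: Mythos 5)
Your proof is correct and follows essentially the same route as the paper's: the substance in both is the short flow-group computation, using that $\SI$ is a section on $\EBVi$, which you phrase as $\BFlow(x,\RS(\difM)(x))=\difM(x)$ while the paper verifies the equivalent identity $\BFlow(\difM(x),-\RS(\difM)(x))\equiv x$. The extra well-definedness and continuity/smoothness-preservation remarks you include are correct but go beyond what the paper's proof records, which consists of just the one flow computation.
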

\proof
This statement was actually established in~\cite[Pr.~3.4]{Maks:Hamvf:2006}.
For the convenience of the reader we recall the proof.
It suffices to show that $\BFlow(\difM(x),-\RS(\difM)(x))\equiv x$ for all $x\in\Vman$ and $\difM\in\XX$ but this easily follows from definitions:
$$
\begin{array}{rcl}
\BFlow(\difM(x),-\RS(\difM)(x)) & = & \BFlow\bigl(\difM(x),-\JS(\difM)(x) - \SI\circ\EJ(\difM)(x)\bigr)\\ [2mm]
& = & \BFlow\bigl(\BFlow(\difM(x),-\JS(\difM)(x)), -\SI\circ\EJ(\difM)(x)\bigr) \\ [2mm]
& = & \BFlow\bigl(\EJ(\difM)(x), -\SI\circ\EJ(\difM)(x)\bigr) = x. \ \qquad \qed
\end{array}
$$

Thus existence of $\jo{\infty}$-sections reduces the problem of resolving~\eqref{equ:h_is_a_shift} to the case when $\jo{\infty}(\difM)=\jo{\infty}(\id)$.

\subsection{Main result.}
Suppose that $\BFld$ is not flat at $\orig$, i.e.\! there exists $\dg\geq1$ such that $\jo{\dg-1}(\BFld)=0$ and $\PFunc=\jo{\dg}(\BFld)$ is a non-zero homogeneous vector field.
For $\dg=1$ we will write $\PFunc(x) = \Lmatr\cdot x$, where $\Lmatr$ is a certain non-zero $(n\times n)$-matrix.
In this case we have the exponential map 
$$\exp_{\Lmatr}:\RRR\to \GL(\RRR,n), \qquad \exp_{\Lmatr}(t) = e^{\Lmatr\, t}.$$ 
Denote its image by $E_{\Lmatr}=\{e^{\Lmatr\, t}\}_{t\in\RRR}$.
Then the following three cases of $E_{\Lmatr}$ will be separated:
\begin{enumerate}
\item[\coiA]
$E_{\Lmatr}\approx \RRR$ and is a \emph{closed\/} subgroup of $\GL(\RRR,n)$;
\item[\coiB]
$E_{\Lmatr}\approx SO(2)$;
\item[\coiC]
$E_{\Lmatr}\approx \RRR$ and is a \emph{non-closed\/} subset of $\GL(\RRR,n)$.
\end{enumerate}

For every $k\geq 1$ denote
$$
\JShBVi{k} = \bigl\{ 
\difM\in\JShBV \ | \  \jo{k-1}(\difM) = \jo{k-1}(\id) \bigr\}, \qquad k\geq1.
$$
Since $\difM(\orig)=\orig$, i.e. $\jo{0}(\difM)=\jo{0}(\id)$, for all $\difM\in\JShBV$, we see that $\JShBV=\JShBVi{1}$.
Moreover, it follows from Lemma~\ref{lm:init-terms-shifts} that $\jo{\dg-1}(\difM)=\jo{\dg-1}(\id)$ for $\dg\geq2$.
Hence for arbitrary $\dg\geq1$ we have the following inclusions:
$$
\JShBV=\JShBVi{1} = \cdots=  \JShBVi{\dg} \supset \JShBVi{\dg+1} \supset \cdots
$$

It will also be convenient to define local variants of the above constructions.
Recall that $\gJShBO$ is the subgroup of $\DiffRno$ consisting of space of germs at $\orig$ of maps from $\gJShBO$, see~\eqref{equ:gJShBO}.
Therefore we define $\gJShBOi{k}$ to be the subspace of $\gJShBO$ consisting of germs at $\orig$ of maps from $\JShBVi{k}$.
Then again
$$
\gJShBO=\gJShBOi{1} = \cdots=  \gJShBOi{\dg} \supset \gJShBOi{\dg+1} \supset \cdots
$$

\begin{theorem}\label{th:cont-estimations}
{\rm(1)} There exists a $\jo{\infty}$-section $\JS$ of $\ShBV$ on $\JShBVi{2}$.
Hence if $\dg\geq2$, then $\JShBV=\JShBVi{2}$, and thus $\JS$ is defined on all of $\JShBV$.

{\rm(2)} 
If $\dg=1$ and $E_{\Lmatr}$ satisfies \coiA, then there exists a global $\jo{\infty}$-section $\JS$ of $\ShBV$ as well.

{\rm(3)}
If $\dg=1$ and $E_{\Lmatr}$ satisfies \coiB, then for every $g\in\JShBV$ the shift map $\ShBV$ has a $\jo{\infty}$-section defined on some $C^{1}_{W}$-neighbourhood of $g$ in $\JShBV$.
\end{theorem}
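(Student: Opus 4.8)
The plan is to imitate the inductive ``jet peeling'' of \S\ref{sect:th-main-equiv}, but now arranging that every choice depends continuously and smoothly on the map, and then to assemble the section by means of Theorem~\ref{th:Borel}. Fix the pertinent domain $\XX$: it will be $\JShBVi{2}$ in~(1), all of $\JShBV$ in~(2), and a suitable $C^{1}_{W}$-neighbourhood of the given $g$ in~(3). For each $l\geq0$ I will build a map $\omega_{l}\colon\XX\to\Poly{l}$ which is $C^{s_{l}}_{W}$-continuous for some $s_{l}\geq0$ and preserves smoothness, such that, putting $\difM_{0}=\difM$ and $\difM_{l+1}(x)=\BFlow\bigl(\difM(x),-\sum_{i=0}^{l}\omega_{i}(\difM)(x)\bigr)$ (which, as in~\eqref{equ:hlplus1}, is nothing but iterated flowing), one has $\jequ{\difM_{l}}{\id}{\dg+l-1}$ and $\jequ{\difM_{l}}{\Bsh{\omega_{l}(\difM)}}{\dg+l}$ for every $l$, exactly as in~\eqref{equ:jets_of_hl}. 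Once the family $\{\omega_{l}\}$ is available, Theorem~\ref{th:Borel} produces a $C^{\infty,\infty}_{W,W}$-continuous and smoothness-preserving map $\JS=\BB\circ\bigl(\sum_{l\geq0}\omega_{l}\bigr)\colon\XX\to C^{\infty}(\Vman,\RRR)$; and the telescoping computation closing \S\ref{sect:th-main-equiv} (written for a single $\difM$, but formal in $\difM$, and using Lemma~\ref{lm:init-terms-shifts} and statement~(2) of Corollary~\ref{cor:rel_betw_shifts}) then gives $\jo{\infty}(\difM)=\jo{\infty}(\Bsh{\JS(\difM)})$ for all $\difM\in\XX$, which by~\eqref{equ:equiv-jinfty-sect} is precisely~\eqref{equ:jih_jiFLh}. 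Shrinking $\Vman$ --- a matter local on $\XX$, hence harmless for the continuity and smoothness claims --- we may also arrange $\JS(\difM)\in\funcBV$.

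For the inductive step $l-1\mapsto l$ with $l\geq1$, suppose $\omega_{0},\dots,\omega_{l-1}$ are already built, so that $\difM_{l}$ is defined (after shrinking $\Vman$ so that the flow times entering its definition remain admissible near $\orig$, uniformly over compact sets of parameters) and $\jequ{\difM_{l}}{\id}{\dg+l-1}$. If $\difM\in\JShBV$, say $\jo{\infty}(\difM)=\jo{\infty}(\Bsh{\afunc_{\difM}})$, then flowing backwards by $\sum_{i<l}\omega_{i}$ shows, via the flow law, that $\difM_{l}$ again lies in $\JShBV$ and has $\afunc_{\difM}-\sum_{i<l}\omega_{i}$ as a shift function at the level of $\infty$-jets; since $\jequ{\difM_{l}}{\id}{\dg+l-1}$, statement~(1) of Corollary~\ref{cor:rel_betw_shifts} forces this shift function to be $l$-small, and Lemma~\ref{lm:init-terms-shifts} then yields a \emph{unique} homogeneous $\omega_{l}(\difM)\in\Poly{l}$ with $\jo{\dg+l}(\difM_{l})(x)=x+\PFunc(x)\,\omega_{l}(\difM)(x)$ --- in fact the degree-$l$ homogeneous part of the $\infty$-jet of $\afunc_{\difM}$, which is well defined (independently of the non-unique $\afunc_{\difM}$) by Corollary~\ref{cor:rel_betw_shifts}(1). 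The crucial observation is that $\omega_{l}(\difM)$ is recovered from $\difM_{l}$ by a \emph{continuous linear} operation: fixing a coordinate $j$ for which the $j$-th component $\pi_{j}$ of $\PFunc$ is a nonzero homogeneous polynomial of degree $\dg$ (possible since $\PFunc\neq0$), the coefficients of $\omega_{l}(\difM)$ are obtained from the order-$(\dg+l)$ Taylor coefficients of the $j$-th component of $\difM_{l}$ by a fixed linear, continuous map (well defined because $\PFunc\neq0$). These Taylor coefficients depend $C^{\dg+l}_{W}$-continuously on $\difM_{l}$ and smoothly in families, while $\difM_{l}$ arises from $\difM$ by composing with the smooth local flow $\BFlow$ and with the already-built polynomial maps $\omega_{i}(\difM)$, $i<l$ --- an operation that is $C^{r}_{W}$-continuous for every $r$ and preserves smoothness, exactly as for the shift map itself (cf.\ \S\ref{sect:shift_map}). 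Hence $\omega_{l}$ is $C^{s_{l}}_{W}$-continuous for a suitable $s_{l}$ and preserves smoothness.

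The only genuinely case-sensitive ingredient is the base term $\omega_{0}\in\Poly{0}=\RRR$, required by~\eqref{equ:pljet_Fa} to satisfy $\jo{\dg}(\difM)(x)=x+\PFunc(x)\,\omega_{0}(\difM)$ when $\dg\geq2$, and $\jo{1}(\difM)(x)=e^{\Lmatr\,\omega_{0}(\difM)}\,x$ when $\dg=1$. If $\dg\geq2$, or if $\dg=1$ and $\difM\in\JShBVi{2}$ (so $\jo{1}(\difM)=\jo{1}(\id)$ and one may simply take $\omega_{0}\equiv0$, whence $\difM_{1}=\difM$), then $\omega_{0}$ is, once more, a continuous linear coefficient extraction defined on the whole domain; together with the inductive construction above this proves~(1) --- the equality $\JShBV=\JShBVi{2}$ for $\dg\geq2$ being the consequence of Lemma~\ref{lm:init-terms-shifts} already recorded in \S\ref{sect:cont-corresp}. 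The remaining situations are $\dg=1$ with $\XX=\JShBV$, where $\jo{1}(\difM)=e^{\Lmatr\,\omega_{0}}$ determines $\omega_{0}$ only modulo $\ker\exp_{\Lmatr}$, and everything comes down to choosing a value continuously and in a smoothness-preserving way. In case~\coiA\ the map $\exp_{\Lmatr}\colon\RRR\to E_{\Lmatr}$ is a diffeomorphism onto the closed one-parameter subgroup $E_{\Lmatr}\subset\GL(\RRR,n)$, so $\omega_{0}=\exp_{\Lmatr}^{-1}\circ\jo{1}(\cdot)$ is a globally defined, $C^{1}_{W}$-continuous, smoothness-preserving map on $\JShBV$; this yields a \emph{global} $\jo{\infty}$-section and proves~(2). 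In case~\coiB, where $E_{\Lmatr}\approx SO(2)$ and $\exp_{\Lmatr}$ is periodic, no global continuous inverse exists, but $\jo{1}\colon\JShBV\to E_{\Lmatr}$ is $C^{1}_{W}$-continuous, so for a given $g\in\JShBV$ there is a $C^{1}_{W}$-neighbourhood of $g$ on which $\jo{1}$ stays in a proper arc of $E_{\Lmatr}$ carrying a smooth local inverse of $\exp_{\Lmatr}$; composition then produces $\omega_{0}$, hence a $\jo{\infty}$-section near $g$, which is~(3). (In case~\coiC\ the bijection $\exp_{\Lmatr}\colon\RRR\to E_{\Lmatr}$ fails to be a homeomorphism onto the non-closed subgroup $E_{\Lmatr}$, there is no continuous way to recover $\omega_{0}$ from $\jo{1}(\difM)$, and the method is unavailable --- which is why \coiC\ is not treated here.)

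I expect two main difficulties. The technical one is the bookkeeping of the continuity orders $s_{l}$, combined with the routine but fiddly verification that pre- and post-composition with the --- in general only local --- flow $\BFlow$ preserves $C^{r}_{W}$-continuity and smoothness uniformly enough, over compact parameter sets and after shrinking $\Vman$, to legitimately feed the $\omega_{l}$ into Theorem~\ref{th:Borel}. The conceptual heart, however, is the trichotomy for $\omega_{0}$: whether the one-parameter group $\exp_{\Lmatr}$ admits a global, resp.\ only a local, continuous inverse, which is exactly the distinction between \coiA, \coiB, and \coiC, and is what forces~(2) and~(3) to be stated globally and only locally, respectively.
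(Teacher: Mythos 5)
Your proposal is correct and follows essentially the same route as the paper: for each case the paper also fixes the appropriate domain $\XX$, builds the maps $\difM\mapsto\omega_l\in\Poly{l}$ inductively (the base case being Proposition~\ref{pr:jinfsect_1}, which handles the $\exp_{\Lmatr}$ trichotomy exactly as you do, and the step $l\geq1$ being Proposition~\ref{pr:jinfsect_2}, which recovers $\omega_l$ from $\jo{\dg+l}(\difM_l)$ by polynomial division, your ``linear coefficient extraction''), and then invokes Theorem~\ref{th:Borel} and Corollary~\ref{cor:rel_betw_shifts}(2) exactly as you outline. The only cosmetic difference is that the paper isolates these two propositions before the proof rather than folding them into the induction as you do.
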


\begin{remark}\rm
If $\dg=1$ but $E_{\Lmatr}$ satisfies \coiC\ then it seems that $\ShBV$ has no even local ($C^{\infty,\infty}_{W,W}$-continuous) $\jo{\infty}$-sections on $\JShBVi{1}$, though by (1) it has a $\jo{\infty}$-section on $\JShBVi{2}$.
\end{remark}

The proof will be given in~\S\ref{sect:proof:th:cont-estimations}.
It is similar to the proof Theorem~\ref{th:grp_subset_JSh} but we have to estimate continuity of certain correspondences.
We need the following two statements.

\begin{proposition}\label{pr:jinfsect_1}
Let $\dg=1$.
In the case \coiA\ of $E_{\Lmatr}$ there exists a $C^{1}_{W}$-continuous map $\Delta_{0}:\JShBV\to \RRR$ such that for every $\difM\in\JShBV$ the germ at $\orig$ of the mapping 
$$ \qBsh{\difM}{-\Delta_{0}(\difM)}(x) = \BFlow(\difM(x),-\Delta_{0}(\difM))$$ belongs to $\gJShBOi{2}$.

Suppose that $E_{\Lmatr}$ satisfies \coiB.
Then for every $g\in\JShBV$ there exists a $C^{1}_{W}$-neighbourhood $\NN_{g}$ in $\JShBV$ and a $C^{1}_{W}$-continuous mapping $\Delta_{0}:\NN_{g}\to \RRR$ such that for every $\difM\in\NN_{g}$ the germ at $\orig$ of the mapping $\qBsh{\difM}{-\Delta_{0}(\difM)}$ belongs to $\gJShBOi{2}$. 

In both cases $\Delta_{0}$ preserves smoothness.
\end{proposition}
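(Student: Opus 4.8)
\medskip

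The plan is to reduce everything to the behaviour of the linear part of $\difM$. For $\difM\in\JShBV$ fix a smooth germ $\afunc_{\difM}$ with $\jo{\infty}(\difM)=\jo{\infty}(\Bsh{\afunc_{\difM}})$ and set $\ell(\difM):=D\difM(\orig)\in\GL(n,\RRR)$. Applying Lemma~\ref{lm:init-terms-shifts} with $\dg=1$ and $l=0$ shows that $\jo{1}(\difM)=\jo{1}(\Bsh{\afunc_{\difM}})$ is the linear map $x\mapsto e^{\Lmatr\,\afunc_{\difM}(\orig)}x$, so $\ell(\difM)=e^{\Lmatr\,\afunc_{\difM}(\orig)}\in E_{\Lmatr}$; in particular $\ell(\difM)$ is invertible, whence every germ in $\JShBV$ is a germ of a diffeomorphism. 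The assignment $\difM\mapsto\ell(\difM)$ is $C^{1}_{W}$-continuous, being evaluation of the first derivatives at the single point $\orig$, and for any $C^{\infty}$ family $H:\Vman\times\RRR^{k}\to\RRR^{n}$ with $H_{t}\in\JShBV$ the curve $t\mapsto\ell(H_{t})$ is $C^{\infty}$ into $\GL(n,\RRR)$ and takes its values in $E_{\Lmatr}$.

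Next I would pin down, for a constant $t\in\RRR$, when the germ of $\qBsh{\difM}{-t}(x)=\BFlow(\difM(x),-t)=(\BFlow_{-t}\circ\difM)(x)$ belongs to $\gJShBOi{2}$. Since $\jo{1}(\BFlow_{-t})=e^{-\Lmatr\,t}$ (Lemma~\ref{lm:init-terms-shifts} again, with $\afunc\equiv -t$) and jets compose by~\eqref{equ:jfg_jf_jg}, we get $\jo{1}(\qBsh{\difM}{-t})=e^{-\Lmatr\,t}\cdot\ell(\difM)$. Moreover $\jo{\infty}(\qBsh{\difM}{-t})$ depends only on $\jo{\infty}(\difM)$, so it equals $\jo{\infty}(\BFlow_{-t}\circ\Bsh{\afunc_{\difM}})=\jo{\infty}(\Bsh{\afunc_{\difM}-t})$, and hence the germ of $\qBsh{\difM}{-t}$ always lies in $\gJShBO$. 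Consequently this germ lies in $\gJShBOi{2}$ exactly when $\jo{1}(\qBsh{\difM}{-t})=\jo{1}(\id)$, i.e.\ when $e^{\Lmatr\,t}=\ell(\difM)$. Thus the whole statement is equivalent to choosing $t=\Delta_{0}(\difM)$ solving $e^{\Lmatr\,t}=\ell(\difM)$ in a manner that is $C^{1}_{W}$-continuous in $\difM$ and preserves smoothness.

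For this I would invert the homomorphism $\exp_{\Lmatr}:\RRR\to E_{\Lmatr}$. In case \coiA\ it is injective (its image is not $SO(2)$), an immersion (as $\Lmatr\neq0$), and has closed image, hence an embedded $1$-dimensional subgroup by Cartan's theorem; so $\exp_{\Lmatr}$ is a diffeomorphism onto $E_{\Lmatr}$ and I set $\Delta_{0}:=\exp_{\Lmatr}^{-1}\circ\ell$ on all of $\JShBV$. In case \coiB\ the homomorphism $\exp_{\Lmatr}$ is periodic of some minimal period $c>0$, so no global inverse exists; but $E_{\Lmatr}\cong SO(2)$ is compact, hence embedded, and for a given $g\in\JShBV$ I would pick $s_{0}$ with $\ell(g)=e^{\Lmatr\,s_{0}}$, put $A_{g}:=\{e^{\Lmatr\,t}:|t-s_{0}|<c/2\}$ (an open arc of $E_{\Lmatr}$ on which $\exp_{\Lmatr}^{-1}$ has a smooth branch $\lambda_{g}$ onto $(s_{0}-c/2,\,s_{0}+c/2)$), take $\NN_{g}:=\ell^{-1}(A_{g})$ — a $C^{1}_{W}$-open subset of $\JShBV$ containing $g$ — and set $\Delta_{0}:=\lambda_{g}\circ\ell$ on $\NN_{g}$. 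In both cases $e^{\Lmatr\,\Delta_{0}(\difM)}=\ell(\difM)$ by construction, so by the previous paragraph the germ of $\qBsh{\difM}{-\Delta_{0}(\difM)}$ lies in $\gJShBOi{2}$; and $\Delta_{0}$ is $C^{1}_{W}$-continuous, being $\ell$ followed by a continuous (locally defined) inverse of $\exp_{\Lmatr}$.

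The step I expect to be the real obstacle is the claim that $\Delta_{0}$ \emph{preserves smoothness}: given a $C^{\infty}$ family $H$ with $H_{t}\in\JShBV$, one must argue that the $C^{\infty}$ curve $t\mapsto\ell(H_{t})$ in $\GL(n,\RRR)$, which only \emph{a posteriori} is known to take values in $E_{\Lmatr}$, is still $C^{\infty}$ as a curve \emph{into the submanifold} $E_{\Lmatr}$, so that post-composition with the (local) diffeomorphism $\exp_{\Lmatr}^{-1}$ makes $\Delta_{0}(H_{t})$ depend smoothly on $t$. This is precisely where the embeddedness of $E_{\Lmatr}$ is used, and it is also what separates cases \coiA, \coiB\ from the excluded case \coiC, where $E_{\Lmatr}$ is only immersed and $\exp_{\Lmatr}^{-1}$ admits no continuous branch at all.
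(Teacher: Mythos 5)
Your proposal is correct and follows essentially the same route as the paper: both identify $\Delta_{0}$ as $\exp_{\Lmatr}^{-1}\circ j^{1}$ (where $j^{1}$ assigns to $\difM$ its Jacobi matrix at $\orig$, landing in $E_{\Lmatr}$ by Lemma~\ref{lm:init-terms-shifts}), using that $\exp_{\Lmatr}$ is an embedding in case \coiA\ and a $\ZZZ$-covering in case \coiB. You spell out more explicitly than the paper does (which dismisses the remaining properties as ``easy to see'') why $C^{1}_{W}$-continuity and preservation of smoothness follow, in particular that embeddedness of $E_{\Lmatr}$ is what makes a smooth family $t\mapsto j^{1}(H_{t})$ with values in $E_{\Lmatr}$ smooth \emph{into} $E_{\Lmatr}$.
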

\begin{proof}
Let $\jo{1}:\JShBV\to\GL(\RRR,n)$ be the map associating to every $\difM\in\JShBV$ its Jacobi matrix $J(\difM,\orig)$ at $\orig$.
Then it follows from Lemma~\ref{lm:init-terms-shifts} that the image of $\jo{1}$ coincides with $E_{\Lmatr}$.
Thus we have two maps:
$$
\begin{CD}  \JShBV  @>{\jo{1}}>> E_{\Lmatr} @<{\exp_{\Lmatr}}<< \RRR. \end{CD}
$$
Notice that in the cases \coiA\ and \coiB\ $E_{\Lmatr}$ is a closed subgroup of $\GL(\RRR,n)$.
Moreover, in the case \coiA\ $\exp_{\Lmatr}$ is an embedding, so we can put
$$\Delta_0:\JShBV\to\RRR, \qquad\Delta_0=\exp_{\Lmatr}^{-1} \circ \jo{1}.$$
In the case \coiB\ $\exp_{\Lmatr}$ is a smooth $\ZZZ$-covering map, whence for every $g\in\JShBV$ there exists only a $C^{1}_{W}$-neighbourhood $\NN_{g}$ in $\JShBV$ such that the map $\Delta_0=\exp_{\Lmatr}^{-1} \circ \jo{1}:\NN_{g}\to\RRR$ is well-defined.

It is easy to see that in both cases $\Delta_0$ has the desired properties.
\end{proof}

\begin{proposition}\label{pr:jinfsect_2}
Let $\dg+l\geq 2$.
Then there exists a $C^{\dg+l}_{W}$-continuous and preserving smoothness map $\Delta_{l}:\JShBVi{\dg+l} \to \Poly{l}$ such that for every $\difM\in\JShBVi{\dg+l}$ the germ at $\orig$ of the map $$\qBsh{\difM}{-\Delta_{l}(\difM)}(x)=\BFlow(\difM(x),-\Delta_{l}(\difM)(x))$$  belongs to $\gJShBOi{\dg+l+1}$.
\end{proposition}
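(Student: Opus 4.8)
The plan is to define $\Delta_{l}(\difM)$ to be the unique homogeneous polynomial $\omega_{l}\in\Poly{l}$ for which $\jo{\dg+l}(\difM)(x)=x+\PFunc(x)\cdot\omega_{l}(x)$, and then to verify three things in turn: that such an $\omega_{l}$ exists and is unique; that the assignment $\difM\mapsto\omega_{l}$ is $C^{\dg+l}_{W}$-continuous and preserves smoothness; and that the germ of $\qBsh{\difM}{-\omega_{l}}$ actually lies in $\gJShBOi{\dg+l+1}$. The first point is the only one with real content; the other two are bookkeeping.

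\emph{Existence and uniqueness of $\omega_{l}$.} By definition of $\JShBV$ there is a germ $\afunc\in\FuncRno$ with $\jo{\infty}(\difM)=\jo{\infty}(\Bsh{\afunc})$. Since $\difM\in\JShBVi{\dg+l}$, we get $\jo{\dg+l-1}(\Bsh{\afunc})=\jo{\dg+l-1}(\difM)=\jo{\dg+l-1}(\id)$, and Corollary~\ref{cor:rel_betw_shifts}(1), applied with $l-1$ in place of $l$ and with the second function identically $0$, then forces $\jo{l-1}(\afunc)=0$; thus $\afunc$ is $l$-small and $\omega_{l}:=\jo{l}(\afunc)$ is homogeneous of degree $l$. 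Lemma~\ref{lm:init-terms-shifts}, in its non-exceptional case (legitimate because $\dg+l\geq2$), now gives $\jo{\dg+l}(\Bsh{\afunc})(x)=x+\PFunc(x)\cdot\omega_{l}(x)$, whence $\jo{\dg+l}(\difM)(x)=x+\PFunc(x)\cdot\omega_{l}(x)$. Uniqueness is immediate: $\PFunc\not\equiv0$ and the polynomial ring is an integral domain, so $\PFunc\cdot\omega=\PFunc\cdot\omega'$ implies $\omega=\omega'$. In particular $\omega_{l}$ depends only on the $(\dg+l)$-jet of $\difM$.

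\emph{Continuity and preservation of smoothness.} The map $\Delta_{l}$ factors as $\difM\mapsto\jo{\dg+l}(\difM)\mapsto R\mapsto\omega_{l}$, where $R$ is the homogeneous degree-$(\dg+l)$ component of $\jo{\dg+l}(\difM)$ (the lower-degree components coincide with those of $\id$ by hypothesis) and $\omega_{l}$ is recovered from $R$ by inverting, on its image, the injective linear map $\Poly{l}\to(\Poly{\dg+l})^{n}$, $\omega\mapsto\PFunc\cdot\omega$. The first arrow is $C^{\dg+l}_{W}$-continuous by the very definition of the $C^{\dg+l}_{W}$-topology and preserves smoothness, since partial derivatives of order $\leq\dg+l$ of a smooth family $H(\cdot,t)$ depend smoothly on $t$; the remaining arrows are linear maps between finite-dimensional spaces, hence continuous and smoothness-preserving. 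So $\Delta_{l}$ is $C^{\dg+l}_{W}$-continuous and preserves smoothness.

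\emph{The germ of $\qBsh{\difM}{-\omega_{l}}$.} Because $\dg+l-1\geq1$ we have $\jo{1}(\difM)=\jo{1}(\id)$, so $\difM$ is a germ of diffeomorphism at $\orig$; the same holds for $\Bsh{-\omega_{l}}$ (its linear part is $\id$ when $l\geq1$, while $\Bsh{-\omega_{0}}=\BFlow_{-\omega_{0}}$ is a flow map when $l=0$), hence $\qBsh{\difM}{-\omega_{l}}$ is a germ of diffeomorphism. By Lemma~\ref{lm:shift-types}(c), $\qBsh{\difM}{-\omega_{l}}=\Bsh{-\omega_{l}\circ\difM^{-1}}\circ\difM$; the first factor lies in $\gimShBO\subset\gJShBO$, the germ of $\difM$ lies in $\gJShBO$ by definition of $\JShBV$, and $\gJShBO$ is a group, so the germ of $\qBsh{\difM}{-\omega_{l}}$ lies in $\gJShBO$. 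Moreover, by Lemma~\ref{lm:init-terms-shifts} once more, $\jo{\dg+l}(\Bsh{\omega_{l}})(x)=x+\PFunc(x)\cdot\omega_{l}(x)=\jo{\dg+l}(\difM)(x)$, i.e.\ $\jo{\dg+l}(\difM)=\jo{\dg+l}(\Bsh{\omega_{l}})$; the implication (D)$\Rightarrow$(E) of Corollary~\ref{cor:rel_betw_shifts}(2), with $k=\dg+l$ and $\afunc=\omega_{l}$, then yields $\jo{\dg+l}(\hBshh{-\omega_{l}})=\jo{\dg+l}(\id)$, which is exactly $\jo{\dg+l}(\qBsh{\difM}{-\omega_{l}})=\jo{\dg+l}(\id)$. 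Combined with the previous sentence this places the germ of $\qBsh{\difM}{-\omega_{l}}$ in $\gJShBOi{\dg+l+1}$. The step I expect to be the crux is the extraction of $\omega_{l}$ — the claim that $\difM\in\JShBVi{\dg+l}$ forces the shift function $\afunc$ to be $l$-small, after which the degree-$(\dg+l)$ part of $\jo{\dg+l}(\difM)$ is automatically divisible by $\PFunc$. This is transparent for $\dg\geq2$; for $\dg=1$ it rests on the exponential-map bookkeeping in Lemma~\ref{lm:init-terms-shifts} and Corollary~\ref{cor:rel_betw_shifts}, and one must keep the cases \coiA--\coiC\ of $E_{\Lmatr}$ in view. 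The continuity estimate, by contrast, is routine once one notices that $\Delta_{l}$ depends only on a finite jet of $\difM$.
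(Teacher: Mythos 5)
Your proof takes essentially the same route as the paper's: define $\Delta_l(\difM)$ to be the unique $\omega_l\in\Poly{l}$ with $\jo{\dg+l}(\difM)(x)=x+\PFunc(x)\omega_l(x)$, factor $\Delta_l$ through the $(\dg+l)$-jet map at $\orig$ for continuity and smoothness preservation, and invoke Corollary~\ref{cor:rel_betw_shifts}(2) for the jet condition. You are in fact more careful than the paper at two points: you check explicitly that $\qBsh{\difM}{-\omega_l}$ lies in $\gJShBO$ (via $\qBsh{\difM}{-\omega_l}=\Bsh{-\omega_l\circ\difM^{-1}}\circ\difM$ and the group property), which the paper leaves implicit, and you spell out the existence step rather than asserting it ``follows from Lemma~\ref{lm:init-terms-shifts}.''

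However, in spelling out the existence step you also make visible a soft spot that the paper's version hides. Both you (explicitly, via the (B)$\Rightarrow$(A) direction of Corollary~\ref{cor:rel_betw_shifts}(1)) and the paper (implicitly) rely on the claim that a shift function $\afunc$ with $\jo{\infty}(\Bsh{\afunc})=\jo{\infty}(\difM)$ must satisfy $\jo{l-1}(\afunc)=0$ once $\jo{\dg+l-1}(\difM)=\jo{\dg+l-1}(\id)$. But Lemma~\ref{lm:init-terms-shifts} and the Corollary's own proof only justify the forward implication in general; the converse breaks down precisely when $\dg=1$ and $E_{\Lmatr}$ has torsion (case \coiB), since then a \emph{constant} $\afunc$ equal to a period of $e^{\Lmatr t}$ already gives $\jo{1}(\Bsh{\afunc})=\id$ with $\afunc(\orig)\neq0$. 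And indeed the proposition itself appears to be false at that level of generality: for $\BFld(x,y)=(-y+cx^3,\,x)$ on $\RRR^2$ with $c\neq0$, so $\dg=1$ and $\PFunc(x,y)=(-y,x)$, the return map $\difM=\BFlow_{2\pi}=\Bsh{2\pi}$ lies in $\JShBVi{3}$, yet averaging gives
$\jo{3}(\difM)(x,y)=(x,y)+\tfrac{3\pi c}{4}(x^2+y^2)(x,y)$, a \emph{radial} cubic correction that cannot be written as $\PFunc(x,y)\omega_2(x,y)$ (a \emph{tangential} one) for any $\omega_2\in\Poly{2}$. This $\BFld$ fails condition \AST, so the downstream Theorems~\ref{th:AST_implies_infjets} and~\ref{th:HamVF} are not contradicted; but the proposition as stated needs either the extra hypothesis or a restriction excluding case \coiB\ from the $l\geq1$ stages. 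Your blind reconstruction thus reproduces the paper's argument faithfully, including this unacknowledged assumption.
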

\begin{proof}
Let $\difM\in\JShBVi{\dg+l}$.
Since $\dg+l\geq2$, we have that either $\dg\geq2$ or $\dg=1$ but $l\geq1$.
It follows from Lemma~\ref{lm:init-terms-shifts} that in both cases there exists a \emph{unique} homogeneous polynomial $\omega_l$ of degree $l$ such that $\jo{\dg+l}(\difM)(x) =x + \Qmap(x) \cdot \omega_{l}(x)$.
The correspondence $\difM\mapsto\omega_l$ is a well-defined map $\Delta_l:\JShBVi{\dg+l} \to \Poly{l}$ and by (2) of Corollary~\ref{cor:rel_betw_shifts} the germ at $\orig$ of the mapping $\qBsh{\difM}{-\Delta_{l}(\difM)}$ belongs to $\gJShBOi{\dg+l+1}$.

Let us verify the continuity of $\Delta_{l}$.
Consider the following map
$$
\jo{\dg+l}:C^{\infty}(\Vman,\RRR^n)\to J^{\dg+l}(\Vman,\RRR^n),
\qquad \difM \mapsto \jo{\dg+l}(\difM),
$$
associating to each $\difM\in C^{\infty}(\Vman,\RRR^n)$ its $(\dg+l)$-jet $\jo{\dg+l}(\difM)$ at $\orig$.
Evidently, $\jo{\dg+l}$ is a $C^{\dg+l}_{W}$-continuous and preserving smoothness map
Moreover, the image $\jo{\dg+l}(\EndBVi{\dg+l})$ is contained in the following set
$\mathcal{A}_{l} = \{ \, x + \Qmap(x)\cdot\omega(x)\  | \ \omega\in\Poly{l} \, \} \; \subset \; J^{\dg+l}(\Vman,\RRR^n).$
Further, it follows from smoothness of the Euclid algorithm of division of polynomials that the correspondence
\ $x + \Qmap(x)\cdot\omega(x) \mapsto \omega$ \
is a well-defined smooth map $D:\mathcal{A}_{l}\to\Poly{l}$.
Therefore $\Delta_l = D \circ \jo{\dg+l}$ is $C^{\dg+l}_{W}$-continuous and preserves smoothness.
\end{proof}

\subsection{Proof of Theorem~\ref{th:cont-estimations}.}\label{sect:proof:th:cont-estimations}
Let $\XX$ be one of the spaces $\JShBVi{2}$, $\JShBV$, or $\NN_{g}$ with respect to the cases (1), (2), or (3) of our theorem, where $\NN_{g}$ is a $C^{1}_{W}$-neighbourhood of $g\in\JShBV$ constructed in Propositions~\ref{pr:jinfsect_1}.

Then similarly to the proof of Theorem~\ref{th:grp_subset_JSh} for every $\difM\in\XX$ we can construct a sequence of homogeneous polynomials $\{\omega_i\}_{i=0}^{\infty}$, $(\deg\omega_i=i)$ via the following rule:
$$
\omega_0 = \Delta_0(\difM),
\quad
\omega_1=\Delta_1(\hBshh{-\omega_0}), 
\quad
\ldots,
\quad
\omega_l = \Delta_l\bigl(\hBshh{-\sum_{i=0}^{l-1}\omega_i}\bigr).
$$
It follows from Propositions~\ref{pr:jinfsect_1}, \ref{pr:jinfsect_2}, and formulae for $\omega_i$ that for every $i\geq0$ the correspondence $\difM\mapsto\omega_i$ is a $C^{\dg+l}_{W}$-continuous and preserving smoothness map
$$\lambda_i:\XX\to\Poly{i}, \qquad \lambda_i(\difM)=\omega_i.$$
Put $\lambda(\difM)=\sum_{i=0}^{\infty}\lambda_i(\difM)$.
Then $\jo{\infty}(\hBshh{-\lambda(\difM)})=\jo{\infty}(\id)$ as well as in Theorem~\ref{th:grp_subset_JSh}.

Now it follows from Borel's Theorem~\ref{th:Borel} applied to $\XX$ that there exists a $C^{\infty,\infty}_{W,W}$-continuous and preserving smoothness map 
$$\SCT: \XX \to C^{\infty}(\Vman,\RRR)$$
such that $\jo{\infty}(\SCT(\difM))=\lambda(\difM)$ for $\difM\in\XX$.
Hence $\jo{\infty}(\hBshh{-\SCT(\difM)})=\jo{\infty}(\id)$.
Then by (2) of Corollary~\ref{cor:rel_betw_shifts} 
$\jo{\infty}(\difM)=\jo{\infty}(\Bsh{\SCT(\difM)})$ for all $\difM\in\XX$.
\qed

\section{Property \AST}\label{sect:prop-AST}
In this section we describe a class of vector fields $\BFld$ on $\RRR^n$ for which $\jo{\infty}\gimShB=\jo{\infty}\gEndIdBzr{1}$, see Theorem~\ref{th:AST_implies_infjets}.
This class is rather special since it consists of \emph{completely integrable} (i.e.\! having $n-1$ almost everywhere independent integrals) vector fields satisfying certain non-degeneracy conditions.

\subsection{Cross product.}
Let $(x_1,\ldots,x_n)$ be coordinates in $\RRR^n$.
For every smooth function $\BFunc:\RRR^n\to\RRR$ define the following ``gradient'' vector field with respect to these coordinates:
$$
\grad_x\BFunc = (\BFunc'_{x_1}, \ldots, \BFunc'_{x_n}).
$$

If $\BFunc_1,\ldots,\BFunc_{n-1}:\RRR^n\to\RRR$ is an $(n-1)$-tuple of smooth functions, then we can define the following \emph{cross-product} vector field:
\begin{equation}\label{equ:cross-prod-formula}
\HFld=[\nabla_x \BFunc_1, \ldots,\nabla_x \BFunc_{n-1}] = 
\left|
\begin{array}{cccc}
\frac{\partial \BFunc_1}{\partial x_1} & \frac{\partial \BFunc_1}{\partial x_2} & \cdots & \frac{\partial \BFunc_1}{\partial x_n} \\ [2mm]
\cdots       & \cdots  & \cdots  & \cdots \\
\frac{\partial \BFunc_{n-1}}{\partial x_1} & \frac{\partial \BFunc_{n-1}}{\partial x_2} & \cdots & \frac{\partial \BFunc_{n-1}}{\partial x_n} \\  [4mm]
\frac{\partial }{\partial x_1} & \frac{\partial }{\partial x_2} & \cdots & \frac{\partial }{\partial x_n} 
\end{array}
\right|
\end{equation}
being an analogue of the cross-product $[a,b]$ of two vectors $a,b$ in $\RRR^3$.
Notice that the first $n-1$ rows of this $(n\times n)$-matrix consist of smooth functions, while the $n$-th row is the standard basis 
$\langle \frac{\partial}{\partial x_1},\ldots,\frac{\partial}{\partial x_{n}}\rangle$ of the space of vector fields on $\RRR^n$. 
Therefore the corresponding determinant is a well-defined vector field.

Equivalently, let us fix the standard Euclidean metric on $\RRR^n$.
Then we have a Hodge isomorphism $*:\JS^{n-1}(\RRR^n) \to \JS^{1}(\RRR^n)$ between the spaces of differential forms and the isomorphism $\phi:\JS^1(\RRR^n)\to \Gamma(\RRR^n)$ between the space of $1$-forms and the space of vector fields on $\RRR^n$.
Then it is easy to see that
$$
[\nabla_x \BFunc_1, \ldots,\nabla_x \BFunc_{n-1}] = 
\phi \circ * (d\BFunc_1 \wedge \cdots \wedge d\BFunc_{n-1}).
$$

It is easy to see that every $\BFunc_i$ is constant along orbits of $\HFld$, i.e.\! $\BFunc_i$ is an \emph{integral\/} for $\HFld$.
Indeed, substituting $\grad_x\BFunc_i$ in~\eqref{equ:cross-prod-formula} instead of last row, we will get $d\BFunc_i(\HFld)=0$.
Thus $\HFld$ is \emph{completely integrable} in the sense that it has $n-1$ integrals and its singular set coincides with the set of points where the gradients $\grad_x\BFunc_1,\ldots,\grad_x\BFunc_{n-1}$ are linearly dependent.

\begin{example}\rm
Let $\BFunc:\RRR^2 \to\RRR$ be a smooth function.
Then
$$\HFld=[\grad_x \BFunc] =\left| 
\begin{matrix}
\BFunc'_{x}  & \BFunc'_{y} \\
\frac{\partial }{\partial x} & \frac{\partial }{\partial y} 
\end{matrix}
\right| = 
- \BFunc'_{y} \, \frac{\partial }{\partial x}  \; + \; 
\BFunc'_{x} \, \frac{\partial }{\partial y}
$$
is the corresponding Hamiltonian vector field of $\BFunc$.
\end{example}
\begin{lemma}\label{lm:Hy_Hx}
Let $x=(x_1,\ldots,x_n)$ and $y=(y_1,\ldots,y_n)$ be two local coordinate systems at $\orig$ related by a germ of diffeomorphism $\difM=(\difM_1,\ldots,\difM_n)$ of $(\RRR^n,\orig)$, i.e.\! $x=\difM(y)$.
Let also $\HFld_{x}$ and $\HFld_{y}$ be vector fields defined by~\eqref{equ:cross-prod-formula} in the coordinates $(x_i)$ and $(y_i)$ respectively, and $\difM^{*}\HFld_{x}$ be the vector field induced by $\difM$, i.e.\! this is $\HFld_{x}$ in the coordinates $(y_i)$.
Then
$$
\HFld_{y} = |J(\difM)| \cdot \difM^{*}\HFld_{x}.
$$
\end{lemma}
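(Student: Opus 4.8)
The plan is to strip the Euclidean metric out of the picture and reformulate $\HFld$ as an interior product with the coordinate volume form, after which the change of variables becomes a one-line manipulation.

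First I would record the key identity. Fixing an $(n-1)$-tuple of smooth functions $\BFunc_1,\dots,\BFunc_{n-1}$ written in the coordinates $(x_i)$, I claim that the field $\HFld_x=[\nabla_x\BFunc_1,\dots,\nabla_x\BFunc_{n-1}]$ of \eqref{equ:cross-prod-formula} is characterised by
$$
\iota_{\HFld_x}\bigl(dx_1\wedge\cdots\wedge dx_n\bigr)=\eps_n\, d\BFunc_1\wedge\cdots\wedge d\BFunc_{n-1},
$$
for a sign $\eps_n$ depending only on $n$ (one gets $\eps_n=(-1)^{n-1}$, but its precise value plays no role). This follows by expanding \eqref{equ:cross-prod-formula} along its last row and comparing the resulting cofactors $(-1)^{n+k}\det M_k$ — $M_k$ being the $x$-gradient matrix of the $\BFunc_i$ with column $k$ deleted — with the coefficients of $d\BFunc_1\wedge\cdots\wedge d\BFunc_{n-1}$; since $\iota_{(\cdot)}(dx_1\wedge\cdots\wedge dx_n)$ is a linear isomorphism from vector fields to $(n-1)$-forms, it determines $\HFld_x$ uniquely. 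The same identity holds for $\HFld_y$ with $\BFunc_i$ replaced by its expression $\BFunc_i\circ\difM$ in the coordinates $(y_i)$ and with $dy_j$ in place of $dx_j$.

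Next I would push everything through $\difM^{*}$. For the diffeomorphism $\difM$ one has $\difM^{*}(\iota_{W}\alpha)=\iota_{\difM^{*}W}(\difM^{*}\alpha)$ for any form $\alpha$ and vector field $W$ on the target (with $\difM^{*}W$ the induced field of the lemma), together with $\difM^{*}d\BFunc_i=d(\BFunc_i\circ\difM)$ and $\difM^{*}\bigl(dx_1\wedge\cdots\wedge dx_n\bigr)=|J(\difM)|\,dy_1\wedge\cdots\wedge dy_n$. Applying $\difM^{*}$ to the identity above and using these facts gives
$$
|J(\difM)|\;\iota_{\difM^{*}\HFld_x}\bigl(dy_1\wedge\cdots\wedge dy_n\bigr)=\eps_n\, d(\BFunc_1\circ\difM)\wedge\cdots\wedge d(\BFunc_{n-1}\circ\difM)=\iota_{\HFld_y}\bigl(dy_1\wedge\cdots\wedge dy_n\bigr),
$$
and the injectivity of $\iota_{(\cdot)}(dy_1\wedge\cdots\wedge dy_n)$ yields $\HFld_y=|J(\difM)|\cdot\difM^{*}\HFld_x$, as claimed.

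For a reader who prefers a bare-hands argument I would also indicate the direct determinant computation: writing $\langle\HFld,\xi\rangle$ for the determinant of the matrix with rows $\nabla_x\BFunc_1,\dots,\nabla_x\BFunc_{n-1},\xi$, the chain rule shows the $y$-gradient matrix of the $(\BFunc_i\circ\difM)$ equals $A\,J(\difM)$, where $A$ is the $x$-gradient matrix of the $\BFunc_i$ evaluated at $x=\difM(y)$; then multiplicativity of $\det$ gives, for all $\eta$, $\langle J(\difM)\HFld_y,\eta\rangle=\langle\HFld_y,J(\difM)^{T}\eta\rangle=|J(\difM)|\,\langle\HFld_x\circ\difM,\eta\rangle$, i.e.\ $J(\difM)\HFld_y=|J(\difM)|\,(\HFld_x\circ\difM)$, which is the assertion once one recalls $\difM^{*}\HFld_x=J(\difM)^{-1}(\HFld_x\circ\difM)$. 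The computation itself is immediate; the only mild care needed is bookkeeping — fixing the sign $\eps_n$ (irrelevant, as it appears on both sides) and, in the hands-on version, correctly identifying the induced field $\difM^{*}\HFld_x$ with $J(\difM)^{-1}(\HFld_x\circ\difM)$ rather than a transpose variant.
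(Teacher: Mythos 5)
Your proof is correct, and it contains two distinct arguments. The second one — pairing $\HFld$ against a test covector $\eta$ to turn the cross-product into an honest scalar determinant, applying the chain rule to write the $y$-gradient matrix as $A\,J(\difM)$, and invoking multiplicativity of $\det$ — is essentially the paper's own computation, but packaged more carefully: the paper factors a determinant whose last row consists of the basis symbols $\partial/\partial x_j$, which is slightly informal, whereas your pairing with $\eta$ keeps everything in the realm of genuine square matrices over the scalars before translating back to vector fields at the very end. The first argument is a genuinely different route. It discards the metric entirely and replaces the gradient/cross-product picture by the characterisation $\iota_{\HFld}(dx_1\wedge\cdots\wedge dx_n)=\pm\,d\BFunc_1\wedge\cdots\wedge d\BFunc_{n-1}$; then the lemma drops out of three standard naturality facts — $\difM^{*}(\iota_{W}\alpha)=\iota_{\difM^{*}W}(\difM^{*}\alpha)$, $\difM^{*}d\BFunc_i=d(\BFunc_i\circ\difM)$, and $\difM^{*}(dx_1\wedge\cdots\wedge dx_n)=|J(\difM)|\,dy_1\wedge\cdots\wedge dy_n$ — with no chain-rule bookkeeping at all. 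The paper mentions the Hodge-star reformulation $\HFld=\phi\circ *(d\BFunc_1\wedge\cdots\wedge d\BFunc_{n-1})$ as a remark but does not use it in the proof; your interior-product version is the metric-free refinement of that remark and is arguably the conceptually cleanest way to see why the Jacobian determinant appears. Both approaches are valid; the forms argument is shorter once the three naturality facts are accepted, while the determinant argument is self-contained and closest to what the paper does.
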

\begin{proof}
Notice that if $\HFld_x(x) = \sum\limits_{i=1}^{n} T_i(x) \frac{\partial}{\partial x_i}$, then in the coordinates $(y_i)$ we can also write
$$\HFld_x(y) = \sum\limits_{i=1}^{n} T_i(y) \frac{\partial}{\partial x_i},$$
where $\frac{\partial}{\partial x_i}=\sum\limits_{j=1}^{n} \frac{\partial y_i}{\partial x_j} \frac{\partial}{\partial y_i}.$
Hence
\begin{equation}\label{equ:H_x_y}
\HFld_x(y)  = 
\left|
\begin{array}{ccc}
\frac{\partial \BFunc_1}{\partial x_1}(y)  & \cdots & \frac{\partial \BFunc_1}{\partial x_n}(y) \\ [2mm]
\cdots         & \cdots  & \cdots \\
\frac{\partial \BFunc_{n-1}}{\partial x_1}(y) & \cdots & \frac{\partial \BFunc_{n-1}}{\partial x_n}(y) \\  [4mm]
\frac{\partial }{\partial x_1} & \cdots & \frac{\partial }{\partial x_n} 
\end{array}
\right|
\end{equation}
On the other hand,
\begin{align*}
 \HFld_y(y) & = 
\left|
\begin{array}{ccc}
\frac{\partial \BFunc_1}{\partial y_1}(y)  & \cdots & \frac{\partial \BFunc_1}{\partial y_n}(y) \\ [2mm]
\cdots         & \cdots  & \cdots \\
\frac{\partial \BFunc_{n-1}}{\partial y_1}(y) & \cdots & \frac{\partial \BFunc_{n-1}}{\partial y_n}(y) \\  [4mm]
\frac{\partial }{\partial y_1} & \cdots & \frac{\partial }{\partial y_n} 
\end{array}
\right|    \\ 
&  = 
\left|
\begin{array}{ccc}
\sum\limits_{i=1}^{n}\frac{\partial \BFunc_1}{\partial x_i}(y) \cdot \frac{\partial x_i}{\partial y_1} & \cdots & 
\sum\limits_{i=1}^{n}\frac{\partial \BFunc_1}{\partial x_i}(y) \cdot \frac{\partial x_i}{\partial y_n}  \\ [2mm]
\cdots         & \cdots  & \cdots \\
\sum\limits_{i=1}^{n}\frac{\partial \BFunc_{n-1}}{\partial x_i}(y)  \cdot \frac{\partial x_i}{\partial y_1} & \cdots & 
\sum\limits_{i=1}^{n}\frac{\partial \BFunc_{n-1}}{\partial x_i}(y)  \cdot \frac{\partial x_i}{\partial y_n}  \\ [2mm]
\sum\limits_{i=1}^{n}\frac{\partial }{\partial x_i}  \cdot \frac{\partial x_i}{\partial y_1} & \cdots & 
\sum\limits_{i=1}^{n}\frac{\partial }{\partial x_i}  \cdot \frac{\partial x_i}{\partial y_n}  \\ [2mm]
\end{array}
\right|   \\
& = 
\left|
\begin{array}{ccc}
\frac{\partial \BFunc_1}{\partial x_1}(y)  & \cdots & \frac{\partial \BFunc_1}{\partial x_n}(y) \\ [2mm]
\cdots         & \cdots  & \cdots \\
\frac{\partial \BFunc_{n-1}}{\partial x_1}(y) & \cdots & \frac{\partial \BFunc_{n-1}}{\partial x_n}(y) \\  [4mm]
\frac{\partial }{\partial x_1} & \cdots & \frac{\partial }{\partial x_n} 
\end{array}
\right| 
\cdot
\left|
\begin{array}{ccc}
\frac{\partial x_1}{\partial y_1}  & \cdots & \frac{\partial x_1}{\partial y_n} \\ [2mm]
\cdots         & \cdots  & \cdots \\
\frac{\partial x_{n}}{\partial y_1} & \cdots & \frac{\partial x_n}{\partial y_n}
\end{array}
\right| \\
& \stackrel{\eqref{equ:H_x_y}}{=\!=\!=}   \HFld_x(y) \cdot |J(\difM)|.
\end{align*}
Lemma is proved.
\end{proof}

\begin{definition}\label{defn:AST-for-vf}
Let $\BFld$ be a vector field defined on some neighbourhood $\Vman$ of $\orig\in\RRR^n$.
Say that $\BFld$ {\bfseries has property \AST\ at $\orig$} if there exist $p\in\NNN$ and $n$ smooth non-flat at $\orig$ functions $$\eta,\BFunc_1,\ldots,\BFunc_{n-1}:\Vman\to\RRR$$ such that
\begin{enumerate}
 \item[\rm(a)] 
\hfil $\jo{p-1}(\BFld)=0$, \hfil
 \item[\rm(b)]
$\PFunc=\jo{p}(\BFld)$ is a non-zero homogeneous vector field being {\bfseries non-divisible by homogeneous polynomials}, i.e.\! $\PFunc$ can not be represented as a product $\PFunc=\omega\Pmap$, where $\omega$ is a homogeneous polynomial of degree $\deg\omega\geq 1$ and $\Pmap$ is a homogeneous vector field of degree $\deg\Pmap\geq1$,
 \item[\rm(c)]
vector fields $\grad_x\BFunc_1,\ldots,\grad_x\BFunc_{n-1}$ are linearly independent on an everywhere dense subset of $\Vman$ and 
\begin{equation}\label{equ:etaF_crossprod}
\eta \cdot \BFld = [\grad_x\BFunc_1,\ldots,\grad_x\BFunc_{n-1}].
\end{equation}
\end{enumerate}
\end{definition}

Let us explain this definition.

1) Since $1\leq p < \infty$, we have that $\BFld(\orig)=0$ and $\BFld$ is not flat at $\orig$.

2) We allow $\eta$ and therefore $\eta \cdot \BFld$ vanish at some points of $\Vman$ which can be even non-singular for $\BFld$.
But due to (c) the set of zeros of $\eta \cdot \BFld$ and therefore $\eta^{-1}(0)$ are nowhere dense in $\Vman$.

3) Let $k=\ord(\eta,\orig)$, $\dg_i=\ord(\BFunc_i,\orig)<\infty$, $(i=1,\ldots,n-1)$.
Thus 
\begin{equation}\label{equ:init_jets}
\gamma=\jo{k}(\eta), \qquad \GFunc_i=\jo{\dg_i}(\BFunc_i), \ (i=1,\ldots,n-1) 
\end{equation}
are non-zero homogeneous polynomials of degrees $k, \dg_1,\ldots,\dg_{n-1}$ respectively.
Then $R=[\grad_x\GFunc_1,\ldots,\grad_x\GFunc_{n-1}]$ is a homogeneous vector field of degree $\sum\limits_{i=1}^{n-1}(\dg_i-1)$ and
$\gamma\cdot \PFunc = R$.
Since $\PFunc$ is non-divisible by homogeneous polynomials, it follows that $\gamma$ is the \emph{greatest common divisor\/} of coordinate functions of $R$ in the ring $\RRR[x_1,\ldots,x_n]$.

\begin{theorem}\label{th:AST_implies_infjets}
If $\BFld$ has property \AST\ then $\jo{\infty}\gimShBO = \jo{\infty}\gEidBOr{1}.$
Moreover, for every neighbourhood $\Vman$ of $\orig$ and every $g\in \EidBVr{1}$ there exist a $C^{1}_{W}$-neighbourhood $\NN_{g}$ in $\EidBVr{1}$ and a $\jo{\infty}$-section of the shift map $\ShBV$ on $\NN_{g}$.
\end{theorem}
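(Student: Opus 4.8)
The plan is to deduce the jet identity $\jo{\infty}\gimShBO=\jo{\infty}\gEidBOr{1}$ from Theorem~\ref{th:grp_subset_JSh} applied to the subgroup $\grp=\gEidBOr{1}$ of $\DiffRno$ (it is a subgroup, since orbit preserving germs that are $C^{1}$-isotopic to $\id$ through orbit preserving germs are closed under composition and inversion), and then to produce the $\jo{\infty}$-section by rerunning the construction of \S\ref{sect:proof:th:cont-estimations} over a $C^{1}_{W}$-neighbourhood $\NN_{g}$ inside $\EidBVr{1}$ in place of $\JShBV$. So the first and main task is to verify hypotheses \coA--\coC\ of Theorem~\ref{th:grp_subset_JSh} for $\grp=\gEidBOr{1}$. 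Condition~\coA\ is immediate from the inclusions $\gimShBO\subset\gEidBOr{\infty}\subset\cdots\subset\gEidBOr{1}$ of \S\ref{sect:shift_map}, so everything lies in \coB\ and \coC.

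The mechanism behind property~\AST\ is that \emph{every orbit preserving germ $\difM$ fixes each integral}: from \eqref{equ:etaF_crossprod} one has $d\BFunc_i(\BFld)=0$ off the nowhere dense set $\eta^{-1}(0)$, hence everywhere, so each $\BFunc_i$ is an integral of $\BFld$; since $\difM$ carries every orbit of $\BFld$ into itself, $\BFunc_i\circ\difM=\BFunc_i$ near $\orig$. Expanding this identity at $\orig$ turns it into jet constraints through the leading forms $\GFunc_i=\jo{\dg_i}(\BFunc_i)$ and the cross product $R=[\grad_x\GFunc_1,\dots,\grad_x\GFunc_{n-1}]=\gamma\,\PFunc$ of \eqref{equ:init_jets}: if $\jo{k-1}(\difM)=\jo{k-1}(\id)$ with $k\ge\max(\dg,2)$ and $R_k$ denotes the degree-$k$ part of $\difM-\id$, then the lowest-order term of $\BFunc_i\circ\difM-\BFunc_i$ is $\langle\grad_x\GFunc_i,R_k\rangle$, so $\langle\grad_x\GFunc_i,R_k\rangle=0$ for $i=1,\dots,n-1$; on the dense open set where the $\grad_x\GFunc_i$ are independent this forces $R_k(x)\in\RRR\cdot R(x)=\RRR\cdot\PFunc(x)$, and since $\PFunc$ is non-divisible by homogeneous polynomials (Definition~\ref{defn:AST-for-vf}(b)), a unique-factorisation argument upgrades it to $R_k=\omega_l\,\PFunc$ with $\omega_l$ homogeneous of degree $l=k-\dg$, uniquely determined. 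By Lemma~\ref{lm:init-terms-shifts} this says $\jo{k}(\difM)=\jo{k}(\Bsh{\omega_l})$, which is exactly \coC. Running the same computation for $2\le m\le\dg-1$ produces $\omega$ of negative degree, hence $R_m=0$, so for $\dg\ge2$ all jets of $\difM$ below degree $\dg$ are trivial and \coB\ for $\dg\ge2$ is the case $k=\dg$, $l=0$ of the same statement.

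The hard part will be \coB\ in its genuine content: the behaviour of the linear part $J=\jo{1}(\difM)$ when $\dg=1$, and the fact that $J=\id$ when $\dg\ge2$. Here $\BFunc_i\circ\difM=\BFunc_i$ only yields $\GFunc_i\circ J=\GFunc_i$ for all $i$ (equivalently, by Lemma~\ref{lm:Hy_Hx} applied to the linear map $J$, that $J$ normalises $\PFunc$ up to the factor $|J|$), and one has to use that $\difM$ lies in the $C^{1}_{W}$-path component of $\id$ in $\gEBO$: the $1$-jet of an orbit preserving $C^{1}$-isotopy from $\id$ to $\difM$ is a path joining $\id$ to $J$ inside the group $\grp_{\mathrm{lin}}$ of linear automorphisms preserving the orbit foliation of $\BFld$. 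For $\dg=1$ one invokes the linear theory of~\cite{Maks:TA:2003}: the identity component of $\grp_{\mathrm{lin}}$ is $E_{\Lmatr}=\{e^{\Lmatr t}\}$, so $J=e^{\Lmatr\,\omega_0}$ and $\jo{1}(\difM)(x)=e^{\Lmatr\,\omega_0}x=\jo{1}(\Bsh{\omega_0})(x)$; for $\dg\ge2$ the non-divisibility of $\PFunc$ forces $\grp_{\mathrm{lin}}$ to be discrete (a linear map preserving a non-divisible homogeneous configuration only permutes its finitely many factors), so the path is constant and $J=\id$. One must also check that property~\AST\ rules out case~\coiC\ for $E_{\Lmatr}$ — an incommensurable-torus orbit would obstruct the complete integrability required in \AST(c) — so that only \coiA\ and \coiB\ occur; this is what makes the $\jo{\infty}$-section available below. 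Establishing the structure of $\grp_{\mathrm{lin}}$, and this dichotomy in particular, is the delicate point of the argument.

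Granting \coA--\coC, Theorem~\ref{th:grp_subset_JSh} gives $\jo{\infty}\gimShBO=\jo{\infty}\gEidBOr{1}$; in particular $\EidBVr{1}\subset\JShBV$ for any neighbourhood $\Vman$ of $\orig$. For the section I would fix $g\in\EidBVr{1}$ and put $\XX=\EidBVr{1}$ when $\Delta_0$ is globally defined ($\dg\ge2$, or $\dg=1$ with~\coiA) and $\XX=\NN_{g}$ a small $C^{1}_{W}$-neighbourhood of $g$ in case~\coiB, exactly as in Propositions~\ref{pr:jinfsect_1} and~\ref{pr:jinfsect_2}. The maps $\Delta_l$ of those propositions restrict to $\XX$ and remain $C^{\dg+l}_{W}$-continuous and smoothness preserving; feeding the recursion $\omega_0=\Delta_0(\difM)$, $\omega_l=\Delta_l\bigl(\hBshh{-\sum_{i<l}\omega_i}\bigr)$ into them is legitimate because $\grp=\gEidBOr{1}$ is a group containing $\gimShBO$, so each intermediate map is again in $\gEidBOr{1}$ and \coB--\coC\ apply to it (cf.~\eqref{equ:Fah_in_G}). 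This gives $C^{\dg+l}_{W}$-continuous, smoothness-preserving maps $\lambda_l:\XX\to\Poly{l}$ with $\jo{\infty}\bigl(\hBshh{-\sum_l\lambda_l(\difM)}\bigr)=\jo{\infty}(\id)$, and Borel's Theorem~\ref{th:Borel} applied to $\XX$ then yields the $C^{\infty,\infty}_{W,W}$-continuous, smoothness-preserving $\jo{\infty}$-section $\JS$ of $\ShBV$ on $\NN_{g}$, which is the second assertion.
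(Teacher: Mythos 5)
Your proposal follows essentially the same route as the paper: reduce the jet identity to verifying hypotheses \coA--\coC\ of Theorem~\ref{th:grp_subset_JSh} for $\grp=\gEidBOr{1}$, derive jet constraints from the fact that orbit preserving germs fix the integrals $\BFunc_i$, and then get the $\jo{\infty}$-section by rerunning the recursion of Propositions~\ref{pr:jinfsect_1}, \ref{pr:jinfsect_2} and Borel's Theorem~\ref{th:Borel} over $\EidBVr{1}\subset\JShBV$. The verification of \coC\ in your sketch (pair $\grad\GFunc_i$ against the lowest-order term of $\difM-\id$, conclude parallelism with $\PFunc$, use non-divisibility) is exactly what the paper does via Corollary~\ref{cor:deg_Fh-F}.

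The one spot where you correctly flag a delicate point but leave an actual gap is \coB. Two remarks. First, for $\dg\geq2$ your assertion that ``a linear map preserving a non-divisible homogeneous configuration only permutes its finitely many factors, so $\grp_{\mathrm{lin}}$ is discrete'' is not an argument; the paper replaces it with a clean infinitesimal computation (Lemma~\ref{lm:tang_stab} and Lemma~\ref{lm:Stab-G1-Gn-1}): if $\Umatr\in T_{E}\Stab(\GFunc_1)\cap\cdots\cap T_{E}\Stab(\GFunc_{n-1})$, then $\langle\grad\GFunc_i(x),\Umatr x\rangle=0$ for all $i$, so $\Umatr x$ is proportional to $\PFunc(x)$; writing $\PFunc=\omega\cdot(\Umatr x)$ and invoking non-divisibility forces $\deg\PFunc=1$, so for $\dg\geq2$ one gets $T_{E}\Stab=0$ and hence $\Stab_{E}=\{E\}$; the same lemma handles $\dg=1$, showing $\Stab_{E}=\{e^{\Lmatr t}\}$ without any appeal to the linear theory of~\cite{Maks:TA:2003}. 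Second, your reason for excluding case~\coiC\ (``an incommensurable-torus orbit would obstruct the complete integrability required in \AST(c)'') is not the right mechanism and would not survive scrutiny; the paper's reason is purely group-theoretic: $E_{\Lmatr}=\Stab_{E}$ is the identity component of the closed subgroup $\Stab=\bigcap_i\Stab(\GFunc_i)$ of $\GL(\RRR,n)$, hence itself closed, so~\coiC\ cannot occur. With those two points made precise by the paper's Lemma~\ref{lm:Stab-G1-Gn-1} and the closedness observation, your plan goes through verbatim.
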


Thus due to Lemma~\ref{lm:inf_sect_alm_sect} in order to completely resolve~\eqref{equ:h_is_a_shift} we have to construct a section of $\ShBV$ on $\EBVi$.
This was done in~\cite{Maks:Hamvf:2006} for the case of homogeneous polynomial vector fields on $\RRR^2$ satisfying \AST, see also \S\ref{sect:vf_on_r2} for more general result.

The proof of Theorem~\ref{th:AST_implies_infjets} will be given in \S\S\ref{sect:prelim:th:AST_implies_infjets},\,\ref{sect:th:AST_implies_infjets}.
The following lemma presents a class of examples of vector fields with property \AST.
\begin{lemma}\label{lm:cross-prod-hom-poly}
Let $\BFunc_1,\ldots,\BFunc_{n-1}:\RRR^n\to\RRR$ be homogeneous polynomials such that $\grad_x\BFunc_1,\ldots,\grad_x\BFunc_{n-1}$ are linearly independent on everywhere dense subset of $\RRR^n$, and let $\eta$ be the greatest common divisor of the coordinate functions of $\HFld=[\grad_x\BFunc_1,\ldots,\grad_x\BFunc_{n-1}]$.
Then the homogeneous vector field $\BFld = \HFld/\eta$ has property \AST.
\qed
\end{lemma}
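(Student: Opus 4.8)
The plan is to check directly that $\BFld=\HFld/\eta$ satisfies the three conditions (a)--(c) of Definition~\ref{defn:AST-for-vf}, taking the auxiliary data to be exactly the functions $\eta,\BFunc_1,\ldots,\BFunc_{n-1}$ of the statement. The whole argument is homogeneity bookkeeping for the cross-product~\eqref{equ:cross-prod-formula} together with the fact that $\RRR[x_1,\ldots,x_n]$ is a unique factorization domain.

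First I would record the homogeneity. Write $d_i=\deg\BFunc_i$; this is finite and $\BFunc_i\not\equiv 0$, since otherwise $\grad_x\BFunc_i\equiv 0$ and the linear independence hypothesis fails. Then each entry $\partial\BFunc_i/\partial x_j$ in the first $n-1$ rows of the matrix in~\eqref{equ:cross-prod-formula} is a homogeneous polynomial of degree $d_i-1$, so expanding the determinant along the last row shows that every coordinate function of $\HFld$ is a homogeneous polynomial of degree $m:=\sum_{i=1}^{n-1}(d_i-1)$. Moreover $\HFld\not\equiv 0$, because at any point where the $\grad_x\BFunc_i$ are linearly independent the determinant~\eqref{equ:cross-prod-formula} is non-zero. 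Since a divisor of a homogeneous polynomial is itself homogeneous, the gcd $\eta$ of the coordinate functions of $\HFld$ is a homogeneous polynomial, say of degree $k$; as $\eta$ divides each of those coordinate functions, the quotient $\BFld=\HFld/\eta$ is a genuine polynomial vector field whose coordinate functions are homogeneous of degree $p:=m-k$, and the gcd of these coordinate functions equals $\gcd(\text{coords of }\HFld)/\eta=1$. (In the cases of interest $\orig$ is a singular point of $\BFld$, equivalently $p\ge 1$; I would assume this, the only alternative being that $\BFld$ is a non-zero constant field.)

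Next I would verify (a)--(c) and the standing requirements of the definition. Condition (a): $\jo{p-1}(\BFld)=0$ since $\BFld$ is homogeneous of degree $p$, and hence $\PFunc:=\jo{p}(\BFld)=\BFld$ is a non-zero homogeneous vector field. Condition (b): if $\BFld=\omega\,\Pmap$ with $\omega$ a homogeneous polynomial of degree $\ge 1$, then $\omega$ divides every coordinate function of $\BFld$, hence divides their gcd, which is $1$; this is impossible, so $\PFunc$ is non-divisible by homogeneous polynomials. Condition (c): the linear independence of $\grad_x\BFunc_1,\ldots,\grad_x\BFunc_{n-1}$ on an everywhere dense subset of $\RRR^n$ is precisely the hypothesis, and~\eqref{equ:etaF_crossprod}, i.e. $\eta\cdot\BFld=[\grad_x\BFunc_1,\ldots,\grad_x\BFunc_{n-1}]$, holds by the definition $\BFld=\HFld/\eta$. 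Finally each $\BFunc_i$ and $\eta$ are non-zero polynomials, hence non-flat at $\orig$ with finite orders $d_i$ and $k$, so all the standing requirements of Definition~\ref{defn:AST-for-vf} are met and $\BFld$ has property~\AST.

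The only point needing a little care --- and the main (mild) obstacle --- is the passage from "$\eta$ is the gcd" to condition (b): one must invoke that in $\RRR[x_1,\ldots,x_n]$ the gcd of homogeneous polynomials is homogeneous and that dividing it out leaves coordinate functions with gcd $1$, so that no homogeneous polynomial of positive degree can divide all of them simultaneously. Everything else is the degree computation for the cross-product determinant and unwinding the definitions.
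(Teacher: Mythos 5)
Your proof is correct. The paper itself supplies no argument here (the lemma is stated with a bare \verb|\qed|, the author evidently regarding it as an immediate consequence of the definitions), and your verification is exactly the expected one: homogeneity bookkeeping for the cross-product determinant, the fact that divisors and gcd's of homogeneous polynomials in $\RRR[x_1,\ldots,x_n]$ are homogeneous, and the observation that dividing out the gcd $\eta$ leaves coordinate functions with trivial gcd, which is precisely condition~(b) of Definition~\ref{defn:AST-for-vf}. Your parenthetical caveat that one must have $p\geq 1$ (i.e.\ $\BFld$ not a non-zero constant field) is a genuine implicit hypothesis of the lemma, consistent with the remark following Definition~\ref{defn:AST-for-vf} that $1\leq p<\infty$, and it is good that you flagged it.
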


\begin{example}\rm
Let $n=2$, $\BFunc(x,y) = x^3 y^4$, and 
$$\HFld=[\grad\BFunc]=(-\BFunc'_{y}, \BFunc'_{x}) = (-4 x^3 y^3, 3 x^2 y^4) =\underbrace{x^2 y^3}_{\eta} \underbrace{(-4 x, 3 y)}_{\BFld} = \eta \BFld.$$
Then $\BFld$ is non-divisible.
Notice also that the singular set of $\HFld$ consists of $x$- and $y$-axes while the singular set of $\BFld$ is the origin only.
\end{example}

\begin{lemma}\label{lm:AST-invar}
Property \AST\ does not depend on a particular choice of local coordinates at $\orig$.
\end{lemma}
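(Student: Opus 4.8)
The plan is to show that property \AST\ is preserved under pull-back by germs of diffeomorphisms fixing $\orig$: if $\difM$ is such a germ and $\BFld$ has property \AST, then so does $\difM^{*}\BFld$. Since passing to another local coordinate system $y$ at $\orig$, related to the old one by $x=\difM(y)$, amounts to replacing $\BFld$ by $\difM^{*}\BFld$, and since $\difM$ is invertible (apply the same statement to $\difM^{-1}$ to get the converse), this yields the lemma. So I would fix data $(p,\eta,\BFunc_1,\ldots,\BFunc_{n-1})$ realizing \AST\ for $\BFld$, let $\PFunc=\jo{p}(\BFld)$, and put $A=J(\difM)(\orig)$, an invertible matrix.

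First I would verify condition (a) and the homogeneity part of (b) for $\difM^{*}\BFld$. Writing $\difM(y)=Ay+O(|y|^{2})$, $\BFld(x)=\PFunc(x)+O(|x|^{p+1})$ and $J(\difM)(y)^{-1}=A^{-1}+O(|y|)$, and using that $\PFunc$ is homogeneous of degree $p$ (so $\PFunc(Ay+O(|y|^{2}))=\PFunc(Ay)+O(|y|^{p+1})$ because $\grad\PFunc$ has degree $p-1$), one gets
$$
\difM^{*}\BFld(y)=J(\difM)(y)^{-1}\,\BFld(\difM(y))=A^{-1}\PFunc(Ay)+O(|y|^{p+1}).
$$
Hence $\jo{p-1}(\difM^{*}\BFld)=0$ and $\jo{p}(\difM^{*}\BFld)$ is the map $y\mapsto A^{-1}\PFunc(Ay)$, a non-zero homogeneous vector field of degree $p$. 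For non-divisibility, suppose $A^{-1}\PFunc(Ay)=\omega(y)\,\Pmap(y)$ with $\omega$ a homogeneous polynomial of degree $\geq 1$ and $\Pmap$ a homogeneous vector field of degree $\geq 1$. Substituting $y=A^{-1}x$ gives $\PFunc(x)=\omega(A^{-1}x)\cdot A\,\Pmap(A^{-1}x)$, a product of a homogeneous polynomial of degree $\deg\omega\geq1$ by a homogeneous vector field of degree $\deg\Pmap\geq1$, contradicting non-divisibility of $\PFunc$. So $\jo{p}(\difM^{*}\BFld)$ is non-divisible and (b) holds.

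The main step is condition (c), and here Lemma~\ref{lm:Hy_Hx} is exactly what is needed. Put $\BFunc_i'=\BFunc_i\circ\difM$ and $\eta'=|J(\difM)|\cdot(\eta\circ\difM)$. Since $\difM^{*}$ commutes with multiplication by a scalar function, i.e.\ $\difM^{*}(\eta\BFld)=(\eta\circ\difM)\cdot\difM^{*}\BFld$, Lemma~\ref{lm:Hy_Hx} (with $x=\difM(y)$) gives
$$
[\grad_y\BFunc_1',\ldots,\grad_y\BFunc_{n-1}']=|J(\difM)|\cdot\difM^{*}\bigl([\grad_x\BFunc_1,\ldots,\grad_x\BFunc_{n-1}]\bigr)=|J(\difM)|\cdot\difM^{*}(\eta\BFld)=\eta'\cdot\difM^{*}\BFld,
$$
which is the cross-product identity~\eqref{equ:etaF_crossprod} for $\difM^{*}\BFld$. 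It then remains to see that $(\eta',\BFunc_1',\ldots,\BFunc_{n-1}')$ is admissible data: composition with $\difM$ induces the automorphism $\tau\mapsto\tau\circ\jo{\infty}(\difM)$ of $\RRR[[x_1,\ldots,x_n]]$, so $\eta\circ\difM$ and all $\BFunc_i\circ\difM$ are non-flat, and multiplying by the unit germ $|J(\difM)|$ keeps $\eta'$ non-flat; finally $\grad_y\BFunc_i'(y)=J(\difM)(y)^{\top}\,\grad_x\BFunc_i(\difM(y))$, so the $\grad_y\BFunc_i'$ are linearly independent precisely on $\difM^{-1}$ of the dense set where the $\grad_x\BFunc_i$ are linearly independent, which is again dense. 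Thus $\difM^{*}\BFld$ satisfies \AST\ with data $(p,\eta',\BFunc_1',\ldots,\BFunc_{n-1}')$. The only content beyond bookkeeping is the linear-substitution argument for non-divisibility and getting the Jacobian factor in Lemma~\ref{lm:Hy_Hx} to land exactly on a new valid $\eta$; the rest is routine.
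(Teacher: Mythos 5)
Your proposal is correct and follows essentially the same route as the paper's own proof: compute $\jo{p}(\difM^{*}\BFld)(y)=A^{-1}\PFunc(Ay)$ for condition (a) and the homogeneity/non-divisibility part of (b), and invoke Lemma~\ref{lm:Hy_Hx} to obtain the cross-product identity with $\eta'=|J(\difM)|\cdot(\eta\circ\difM)$ for condition (c). You merely spell out a few details the paper leaves implicit (the linear-substitution argument for non-divisibility and the bookkeeping that the transformed data remain non-flat and generically independent), but the argument is the same.
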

\begin{proof}
Suppose that in coordinates $x=(x_1,\ldots,x_n)$ at $\orig$ conditions (a)-(c) of Definition~\ref{defn:AST-for-vf} are satisfied.
Let $y=(y_1,\ldots,y_n)$ be another coordinates at $\orig$ related to $(x_1,\ldots,x_n)$ by a germ of a diffeomorphism $\difM=(\difM_1,\ldots,\difM_n)$ of $(\RRR^n,\orig)$, i.e.\! $x=\difM(y)$.
We have to show that conditions (a)-(c) of Definition~\ref{defn:AST-for-vf} hold in the coordinates $(y_1,\ldots,y_n)$ for the induced vector field $\difM^{*}\BFld=T\difM^{-1}\circ\BFld\circ\difM$.

Let $A=J(\difM,\orig)$ be the Jacobi matrix of $\difM$ at $\orig$.
Then it easily follows from condition (a) for $\BFld$ that 
$$
\jo{\dg-1}(\difM^{*}\BFld) = 0,
\qquad\qquad
\jo{\dg}(\difM^{*}\BFld)(y) = A^{-1} \PFunc(Ay).
$$
The latter identity implies that the initial non-zero jet of $\difM^{*}\BFld$ is non-divisible by homogeneous polynomials iff so is $\PFunc$.
This proves (a) and (b) for $\difM^{*}\BFld$.

To establish (c) apply $\difM$ to both parts of~\eqref{equ:etaF_crossprod}.
Then 
\begin{gather*}
 \difM^{*}(\eta \cdot \BFld)\;=\;\eta\circ\difM \;\cdot\; \difM^{*}\BFld, \\
 \difM^{*} [\grad_x\BFunc_1,\ldots,\grad_x\BFunc_{n-1}] \stackrel{\text{Lemma~\ref{lm:Hy_Hx}}}{=\!=\!=\!=\!=\!=\!=} 
\frac{1}{|J(\difM)|}\cdot [\grad_y\BFunc_1,\ldots,\grad_y\BFunc_{n-1}].
\end{gather*}
Denote $\eta'=\eta\circ\difM \cdot |J(\difM)|$.
Then $\eta'$ is smooth and 
$$\eta' \cdot \difM^{*}\BFld = [\grad_y\BFunc_1,\ldots,\grad_y\BFunc_{n-1}].$$
This proves (c).
\end{proof}

\section{Stabilizers of functions and polynomials}\label{sect:prelim:th:AST_implies_infjets}
In this section we present some statements which will be used in the proof of Theorem~\ref{th:AST_implies_infjets}.

\begin{lemma}\label{lm:deg_Fh-F}
Let $\BFunc\in \FuncRno$,  $\difM\in \EndRno$, and $\delta=\BFunc\circ\difM-\BFunc$. 
Suppose that $\jo{\dg-1}(\BFunc)=0$ and $\jo{k-1}(\difM)=\jo{k-1}(\id)$ for some $\dg,k\geq1$.
In particular $\Gamma=\jo{\dg}(\BFunc)$ is a homogeneous polynomial of degree $\dg$.

If $k=1$ and  $\jo{1}(\difM)(x)=Ax$ for some $(n\times n)$-matrix, then 
$$ 
\jo{\dg}(\delta)(x) = \GFunc(A\cdot x)-\GFunc(x).
$$

If $k\geq2$ and $\jo{k}(\difM)(x) =  x + \vfunc(x)$ for some homogeneous map $\vfunc$ of degree $k$, then 
$$
\jo{\dg-1+k}(\delta) = \langle \grad\GFunc, \vfunc \rangle.
$$
\end{lemma}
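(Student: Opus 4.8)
The plan is to reduce both assertions to a bookkeeping of infinitesimal orders after expanding $\BFunc$ into homogeneous components. Write $\BFunc=\sum_{j\geq \dg}G_j$, where $G_j$ is the homogeneous part of $\BFunc$ of degree $j$; by the hypothesis $\jo{\dg-1}(\BFunc)=0$ the terms of degree $<\dg$ vanish, so $G_{\dg}=\GFunc$. Since $\jo{m}$ is linear and $\delta=\BFunc\circ\difM-\BFunc$, it suffices in each case to compute $\jo{m}(\BFunc\circ\difM)$ and subtract $\jo{m}(\BFunc)=\sum_{\dg\leq j\leq m}G_j$, with $m=\dg$ in the first case and $m=\dg-1+k$ in the second. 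Throughout I would use that the $m$-jet of a composition depends only on the $m$-jets of the factors (the analogue of~\eqref{equ:jfg_jf_jg} for maps into $\RRR$), together with Taylor's formula applied to each homogeneous summand $G_j$.

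First, the case $k=1$ with $m=\dg$. Here $\jo{\dg}(\difM)(x)=Ax+q(x)$, where $q$ collects the homogeneous parts of $\difM$ of degrees $2,\dots,\dg$, and $\jo{\dg}(\BFunc)=\GFunc$ is homogeneous of degree $\dg$. Expanding $\GFunc(Ax+q(x))$ by Taylor's formula around $Ax$, the term $\GFunc(Ax)$ has degree $\dg$, while every further term contains at least one factor $q$ (of degree $\geq 2$) multiplied by a partial derivative of $\GFunc$ of order $\geq \dg-1$, hence has degree $\geq \dg+1$. Therefore $\jo{\dg}(\BFunc\circ\difM)(x)=\GFunc(Ax)$, and subtracting $\jo{\dg}(\BFunc)(x)=\GFunc(x)$ gives $\jo{\dg}(\delta)(x)=\GFunc(Ax)-\GFunc(x)$.

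Next, the case $k\geq 2$ with $m=\dg-1+k$. Now $\jo{m}(\difM)(x)=x+\vfunc(x)+w(x)$, where $\vfunc$ is the homogeneous part of degree $k$ and $w$ collects the homogeneous parts of $\difM$ of degrees $k+1,\dots,m$. For each $j$ with $\dg\leq j\leq m$ I would expand $G_j\bigl(x+\vfunc(x)+w(x)\bigr)$ by Taylor's formula around $x$: the zeroth-order term is $G_j(x)$, which cancels the corresponding term of $\jo{m}(\BFunc)$ inside $\delta$; the first-order term is $\langle\grad G_j(x),\vfunc(x)+w(x)\rangle$, of which the part from $\vfunc$ has degree $(j-1)+k$ and the part from $w$ has degree $\geq (j-1)+k+1$; and the terms of order $\geq 2$ have degree $\geq (j-2)+2k$. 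Among all these, only the $\vfunc$-part with $j=\dg$ has degree $\leq m$: it equals $\langle\grad\GFunc,\vfunc\rangle$, of degree exactly $\dg-1+k=m$. All remaining contributions have degree $\geq m+1$ --- in particular the second-order term for $j=\dg$ has degree $\geq(\dg-2)+2k=m+(k-1)\geq m+1$, which is where the hypothesis $k\geq 2$ enters. Hence $\jo{m}(\BFunc\circ\difM)=\jo{m}(\BFunc)+\langle\grad\GFunc,\vfunc\rangle$ and so $\jo{\dg-1+k}(\delta)=\langle\grad\GFunc,\vfunc\rangle$.

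The argument is essentially routine multivariate Taylor expansion; the only delicate point is the degree accounting, and the sharpest inequality is the one just noted for the second-order term when $k\geq 2$ --- this is also the structural reason the two cases of the statement look different (a linear substitution $x\mapsto Ax$ inside $\GFunc$ versus a directional derivative $\langle\grad\GFunc,\vfunc\rangle$). No compactness or analytic input is needed beyond the jet calculus recalled in~\S\ref{sect:jets}.
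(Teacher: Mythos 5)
Your proof is correct, and in fact the paper gives no proof at all here --- it simply remarks that ``the proof of this lemma is direct and we leave it for the reader.'' The direct Taylor-expansion-plus-degree-accounting argument you spell out is exactly the intended one, using the jet composition rule~\eqref{equ:jfg_jf_jg} to truncate, expanding $\GFunc$ (resp.\ each $G_j$) around $Ax$ (resp.\ $x$), and observing that the first-order term with the $\vfunc$-contribution at $j=\dg$ is the unique one of degree $\leq m$. The two delicate degree bounds you isolate are the right ones: $(\dg-1)+2$ for the first-order correction by $q$ in the $k=1$ case, and $(\dg-2)+2k\geq m+1$ for the second-order term when $k\geq2$, which indeed explains the structural difference between the two conclusions.

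One small slip in wording: in the $k=1$ discussion you write that the remaining terms contain ``a partial derivative of $\GFunc$ of order $\geq\dg-1$,'' but what you mean (and what makes the bound work) is that the $\ell$-th order term carries an $\ell$-th partial of $\GFunc$, a homogeneous polynomial of degree $\dg-\ell$, times $\ell$ factors of $q$ each of degree $\geq2$, so the total degree is $\geq\dg+\ell\geq\dg+1$. The conclusion is unaffected, but as written the phrase does not say what the computation uses.
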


The proof of this lemma is direct and we leave it for the reader.
\begin{corollary}\label{cor:deg_Fh-F}
Suppose that $\difM$ preserves $\BFunc$, i.e.\! $\BFunc\circ\difM=\BFunc$, and thus $\delta\equiv0$.
If $k=1$, then $\GFunc(A\cdot x)=\GFunc(x)$.
If $k\geq2$, then $\langle \grad\GFunc, \vfunc \rangle=0$.
\end{corollary}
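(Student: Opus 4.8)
The plan is to derive Corollary~\ref{cor:deg_Fh-F} as a direct specialization of Lemma~\ref{lm:deg_Fh-F}: all of the substance --- the order estimates for $\delta=\BFunc\circ\difM-\BFunc$ and the explicit identification of its initial non-zero jet --- is already contained in that lemma, so here one only needs to feed in the hypothesis $\delta\equiv0$.

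First I would record that the assumption $\BFunc\circ\difM=\BFunc$ means exactly that the germ $\delta=\BFunc\circ\difM-\BFunc$ at $\orig$ vanishes identically, hence $\jo{m}(\delta)=0$ for every $m\geq0$. In particular this applies to the two exponents that appear in Lemma~\ref{lm:deg_Fh-F}, namely $m=\dg$ in the case $k=1$ and $m=\dg-1+k$ in the case $k\geq2$.

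Then I would split into the two cases of the lemma. If $k=1$ and $\jo{1}(\difM)(x)=Ax$, Lemma~\ref{lm:deg_Fh-F} gives $\jo{\dg}(\delta)(x)=\GFunc(A\cdot x)-\GFunc(x)$; since the left-hand side is the zero polynomial, $\GFunc(A\cdot x)=\GFunc(x)$. If $k\geq2$ and $\jo{k}(\difM)(x)=x+\vfunc(x)$ with $\vfunc$ homogeneous of degree $k$, the same lemma gives $\jo{\dg-1+k}(\delta)=\langle\grad\GFunc,\vfunc\rangle$, and vanishing of the left-hand side yields $\langle\grad\GFunc,\vfunc\rangle=0$.

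I do not expect any real obstacle here. The only point to stay aware of --- and it is already handled within Lemma~\ref{lm:deg_Fh-F} --- is the bookkeeping of degrees: $\GFunc=\jo{\dg}(\BFunc)$ is homogeneous of degree $\dg$, so that $x\mapsto\GFunc(Ax)$ is again a degree-$\dg$ polynomial and $\langle\grad\GFunc,\vfunc\rangle$ is homogeneous of degree $\dg-1+k$, exactly matching the orders of the jets being set to zero. Granting that, the corollary is a one-line consequence of the lemma.
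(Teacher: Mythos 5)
Your proof is correct and is precisely the intended argument: the paper states the corollary without proof immediately after Lemma~\ref{lm:deg_Fh-F}, treating it as an evident specialization to $\delta\equiv0$, which is exactly what you carry out.
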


\subsection{Stabilizers of polynomials.}
Consider the \emph{right\/} action of the group $\GL(\RRR,n)$ on the space of polynomials $\RRR[x_1,\ldots,x_n]$ by:
\begin{equation}\label{equ:act_GL_Poly}
\begin{array}{c}
\Phi:\RRR[x_1,\ldots,x_n] \times \GL(\RRR,n) \to \RRR[x_1,\ldots,x_n] \\ [2mm]
\Phi(\GFunc, A)= \GFunc\circ A, 
\qquad\text{i.e.}\qquad 
\Phi(\GFunc, A)(x)= \GFunc(A\cdot x),
\end{array}
\end{equation}
where $(\GFunc,A)\in \RRR[x_1,\ldots,x_n] \times \GL(\RRR,n)$.
For $\GFunc\in\RRR[x_1,\ldots,x_n]$ let $$\Stab(\GFunc) = \{A\in\GL(\RRR,n) \ | \ \GFunc(A\cdot x) = \GFunc(x) \}$$ be its stabilizer with respect to $\Phi$.
Then $\Stab(\GFunc)$ is a closed (and therefore a Lie) subgroup of $\GL(\RRR,n)$.

\begin{lemma}\label{lm:tang_stab}
For every $\GFunc\in\RRR[x_1,\ldots,x_n]$ the tangent space $T_{E}\Stab(\GFunc)$ to the stabilizer $\Stab(\GFunc)$ of $\GFunc$ at $E$ consists of matrices $V\in M(\RRR,n)$  such that 
$\langle \grad\GFunc(x), V\, x \rangle=0$ for all $x\in\RRR^n$.
\end{lemma}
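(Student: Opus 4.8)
The plan is to recognise $\Stab(\GFunc)$ as a closed (hence Lie) subgroup of $\GL(\RRR,n)$, as already noted above, and to compute its tangent space at $E$ by differentiating the invariance relation $\GFunc(A\cdot x)=\GFunc(x)$ along one-parameter subgroups $t\mapsto e^{tV}$.

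Set $W=\{\,V\in M(\RRR,n)\ :\ \langle\grad\GFunc(x),Vx\rangle=0\ \text{for all}\ x\in\RRR^n\,\}$, a linear subspace of $M(\RRR,n)$; the claim is $T_{E}\Stab(\GFunc)=W$. For the inclusion $T_{E}\Stab(\GFunc)\subseteq W$, take $V\in T_{E}\Stab(\GFunc)$ and a smooth curve $A$ in $\Stab(\GFunc)$ with $A(0)=E$ and $A'(0)=V$. For each fixed $x$ the function $t\mapsto\GFunc(A(t)x)$ is identically equal to $\GFunc(x)$, so differentiating at $t=0$ by the chain rule gives $\langle\grad\GFunc(x),Vx\rangle=0$; thus $V\in W$. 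For the reverse inclusion $W\subseteq T_{E}\Stab(\GFunc)$, fix $V\in W$ and $x\in\RRR^n$ and put $\phi(t)=\GFunc(e^{tV}x)$. Then $\phi'(t)=\langle\grad\GFunc(e^{tV}x),\,Ve^{tV}x\rangle$, and applying the defining relation of $W$ at the point $y=e^{tV}x$ shows $\phi'(t)=0$ for every $t$. Hence $\GFunc(e^{tV}x)=\GFunc(x)$ for all $t$ and all $x$, i.e.\ $e^{tV}\in\Stab(\GFunc)$ for every $t\in\RRR$; since $\Stab(\GFunc)$ is a closed subgroup of $\GL(\RRR,n)$, this means precisely that $V\in T_{E}\Stab(\GFunc)$.

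I do not expect a genuine obstacle here. The one point needing care is the last implication: that a matrix $V$ with $e^{tV}\in\Stab(\GFunc)$ for all $t$ necessarily lies in $T_{E}\Stab(\GFunc)$. This is a standard consequence of Cartan's closed subgroup theorem, which identifies $T_{E}\Stab(\GFunc)$ with the Lie algebra $\{\,V\ :\ e^{tV}\in\Stab(\GFunc)\ \text{for all}\ t\,\}$. Should one wish to avoid invoking it, one can instead check directly that $W$ is closed under the matrix commutator (since $\langle\grad\GFunc(x),Vx\rangle$ is the Lie derivative of $\GFunc$ along the linear field $x\mapsto Vx$, and the vector fields annihilating $\GFunc$ form a Lie subalgebra), so that $W$ integrates to a connected Lie subgroup $K\subseteq\Stab(\GFunc)$ with $T_{E}K=W$; combined with the inclusion $T_{E}\Stab(\GFunc)\subseteq W$ established in the first step, this forces $T_{E}\Stab(\GFunc)=W$.
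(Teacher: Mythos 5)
Your proof is correct and follows essentially the same route as the paper's: differentiate $\GFunc(e^{tV}x)=\GFunc(x)$ along the one-parameter subgroup $e^{tV}$, recover the condition $\langle\grad\GFunc(x),Vx\rangle=0$ at $t=0$, and pass back from $t=0$ to all $t$ by substituting $e^{tV}x$ for $x$. The only difference is that you make explicit the appeal to Cartan's closed subgroup theorem for the equivalence "$V\in T_E\Stab(\GFunc)$ iff $e^{tV}\in\Stab(\GFunc)$ for all $t$", which the paper treats as evident.
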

\begin{proof}
Let $V\in M(\RRR,n)$ and $A:\RRR\to\GL(\RRR,n)$ be the following homomorphism $A(t)=e^{Vt}$.
Evidently, $A(0)=E$ and $A'_t(0)=V$.
Notice that 
$$ 
\frac{\partial}{\partial t}\, \GFunc(A(t)\, x) = 
\langle\grad\GFunc(A(t)\, x), A'_{t}(t) \, x \rangle = 
\langle\grad\GFunc(e^{Vt}\, x), V e^{Vt} \, x \rangle.
$$ 
Then the following statements are equivalent:
\begin{enumerate}
 \item[(i)]
$V\in T_{E}\Stab(\GFunc) $;
 \item[(ii)]
$A(t)\in \Stab(\GFunc)$, i.e.\! $\GFunc(A(t)\, x) = \GFunc(x)$, for all $t\in\RRR$ and $x\in\RRR^n$;
\smallskip
 \item[(iii)]
$\frac{\partial}{\partial t}\GFunc(A(t)\, x)=\langle\grad\GFunc(e^{Vt}\, x), V e^{Vt} \, x \rangle = 0$, for all $x\in\RRR^n$;
\smallskip
 \item[(iv)]
$\frac{\partial}{\partial t}\GFunc(A(t)\, x)|_{t=0}=
\langle\grad\GFunc(x), V \, x \rangle = 0$ for all $x\in\RRR^n$.
\end{enumerate}
The implications (i)$\Leftrightarrow$(ii)$\Leftrightarrow$(iii)$\Rightarrow$(iv) are evident and (iv)$\Rightarrow$(iii) can be obtained by substituting $e^{Vt}x$ instead of $x$ in (iv).
It remains to note that our lemma claims that (i)$\Leftrightarrow$(iv).
\end{proof}

Let $\GFunc_1,\ldots,\GFunc_{n-1}\in\RRR[x_1,\ldots,x_n]$ and
\begin{equation}\label{equ:stab_G1_Gn1}
\Stab = \mathop\cap\limits_{i=1}^{n-1}\Stab(\GFunc_i)
\end{equation}
be the intersection of their stabilizers.
Then $\Stab$ is a closed Lie subgroup of $\GL(\RRR,n)$.
Denote by $T_{E}\Stab$ the tangent space of $\Stab$ at the unity matrix $E$, and let $\Stab_{E}$ be the unity component of $\Stab$.

\begin{lemma}\label{lm:Stab-G1-Gn-1}
Let $\HFld=[\grad\GFunc_1,\ldots,\grad\GFunc_{n-1}]$ be the vector field on $\RRR^n$ defined by~\eqref{defn:AST-for-vf}, $\eta$ be the greatest common divisor of coordinate functions of $\HFld$, and $\PFunc = \HFld/\eta$.
Suppose that $\PFunc\not\equiv0$.
Then $\PFunc$ is non-divisible by polynomials, i.e.\! if $\PFunc(x)=\omega(x) U(x)$, where $\omega$ is a polynomial and $U$ is a polynomial vector field, then either $\deg\omega=0$, or $\deg U=0$.

{\rm(i)} If $\deg\PFunc=1$, i.e.\! $\PFunc(x)=\Lmatr x$ for some non-zero matrix $\Lmatr\in M(\RRR,n)$, then $T_{E}\Stab=\{\Lmatr\,t\}_{t\in\RRR}$ and $\Stab_{E} = \{ e^{\Lmatr\,t} \}_{t\in \RRR}$.

{\rm(ii)} If $\deg\PFunc\geq2$ then $\Stab_{E}=\{E\}$.
\end{lemma}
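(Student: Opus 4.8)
The plan is to reduce the whole statement to an identification of the tangent Lie algebra $T_{E}\Stab$, after which the description of the identity component $\Stab_{E}$ follows at once from Lie theory. Throughout I use that the $\GFunc_{i}$ are homogeneous, so $\HFld$, its greatest common divisor $\eta$, and hence $\PFunc=\HFld/\eta$ are homogeneous polynomial vector fields, and that the hypothesis $\PFunc\not\equiv0$ forces $\HFld\not\equiv0$ and $\eta\not\equiv0$.

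First I would settle non-divisibility, which is purely formal. By the choice of $\eta$ as the greatest common divisor of the coordinate functions $\HFld^{1},\ldots,\HFld^{n}$ in the UFD $\RRR[x_{1},\ldots,x_{n}]$, the coordinates $\PFunc^{j}=\HFld^{j}/\eta$ of $\PFunc$ satisfy $\gcd(\PFunc^{1},\ldots,\PFunc^{n})=1$. Hence if $\PFunc=\omega U$ with $\omega$ a polynomial and $U$ a polynomial vector field, then $\omega$ divides every $\PFunc^{j}=\omega U^{j}$, so $\omega$ divides $\gcd(\PFunc^{1},\ldots,\PFunc^{n})=1$ and is a non-zero constant; thus $\deg\omega=0$.

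Next I would compute $T_{E}\Stab$. Since $\Stab=\bigcap_{i=1}^{n-1}\Stab(\GFunc_{i})$ is an intersection of closed subgroups of $\GL(\RRR,n)$, we have $T_{E}\Stab=\bigcap_{i}T_{E}\Stab(\GFunc_{i})$, which by Lemma~\ref{lm:tang_stab} is the space of matrices $V\in M(\RRR,n)$ with $\langle\grad\GFunc_{i}(x),Vx\rangle=0$ for all $i$ and all $x\in\RRR^{n}$. Let $W(x)=Vx$ be the corresponding linear vector field. Putting $\grad\GFunc_{i}$ into the last row of the determinant~\eqref{equ:cross-prod-formula} gives $\langle\grad\GFunc_{i},\HFld\rangle\equiv0$, and cancelling $\eta$ yields $\langle\grad\GFunc_{i},\PFunc\rangle\equiv0$. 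Let $\Omega$ be the complement of the zero set of $\HFld$; it is dense because $\HFld\not\equiv0$, and on $\Omega$ the vectors $\grad\GFunc_{1}(x),\ldots,\grad\GFunc_{n-1}(x)$ are independent, so their orthogonal complement is the line $\RRR\,\HFld(x)=\RRR\,\PFunc(x)$ (in particular $\PFunc(x)\ne0$ there). Since $W(x)$ also lies in that complement, $W(x)$ and $\PFunc(x)$ are proportional for $x\in\Omega$; thus the $2\times2$ minors $W^{j}\PFunc^{k}-W^{k}\PFunc^{j}$ are polynomials vanishing on a dense set, hence vanish identically.

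The step I expect to be the crux is upgrading this pointwise proportionality to an algebraic identity $W=q\PFunc$ with $q$ a polynomial. Over the fraction field $\RRR(x_{1},\ldots,x_{n})$ the vanishing of those minors means $W$ and $\PFunc$ are linearly dependent, and since $\PFunc\not\equiv0$ there is a unique rational function $r$ with $W=r\PFunc$. Writing $r=a/b$ in lowest terms, the relations $bW^{j}=a\PFunc^{j}$ together with $\gcd(a,b)=1$ force $b\mid\PFunc^{j}$ for every $j$, hence $b\mid\gcd(\PFunc^{1},\ldots,\PFunc^{n})=1$, so $b$ is constant and $q:=r$ is a polynomial. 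A degree comparison then finishes the computation: $W$ is homogeneous of degree $1$ and $\PFunc$ of degree $\deg\PFunc$, so $q$ is homogeneous of degree $1-\deg\PFunc$. In case~(i) ($\deg\PFunc=1$) this makes $q$ a scalar $t$ and $V=t\Lmatr$; conversely every $t\Lmatr$ belongs to $T_{E}\Stab$ because $\Lmatr x=\PFunc(x)$ and $\langle\grad\GFunc_{i},\PFunc\rangle\equiv0$, so $T_{E}\Stab=\{\Lmatr\,t\}_{t\in\RRR}$. In case~(ii) ($\deg\PFunc\ge2$) we must have $q\equiv0$, whence $V=0$ and $T_{E}\Stab=\{0\}$. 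Finally, $\Stab$ is a closed, hence a Lie, subgroup of $\GL(\RRR,n)$, and its identity component $\Stab_{E}$ is the unique connected Lie subgroup with Lie algebra $T_{E}\Stab$: the one-parameter subgroup $\{e^{\Lmatr\,t}\}_{t\in\RRR}$ in case~(i), and $\{E\}$ in case~(ii).
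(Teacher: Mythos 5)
Your proof is correct and follows essentially the same route as the paper: compute $T_{E}\Stab$ via Lemma~\ref{lm:tang_stab}, observe that $Vx$ is pointwise parallel to $\PFunc$, upgrade this to an algebraic proportionality, and conclude by a degree count, with the group-level statement following from $\Stab$ being a closed (hence Lie) subgroup. One small remark: your orientation of the proportionality, $Vx=q\PFunc$, is actually the cleaner one --- the relation $\gcd(\PFunc^{1},\ldots,\PFunc^{n})=1$ directly forces the denominator of $q$ to be trivial, whereas the paper writes $\PFunc=\omega\cdot Vx$ and its assertion that $\omega$ is a polynomial is less immediate (it holds there only because $\omega$ turns out to be a constant).
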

\begin{proof}
Notice that $T_{E}\Stab = \mathop\cap\limits_{i=1}^{n-1} T_{E} \Stab(\GFunc_i)$.
Let $\Umatr\in T_{E}\Stab$.
Then by Lemma~\ref{lm:tang_stab} 
$\langle \grad\GFunc_i(x), \Umatr x \rangle=0$ for every $i=1,\ldots,n-1.$
Therefore $\Umatr x$ is parallel to the cross product $\HFld(x)$ of gradients $\grad\GFunc_i$ and therefore to $\PFunc(x)$ at every $x\in\RRR^n$.
If $\Umatr\not=0$, then there exists a non-zero polynomial $\omega$ such that 
\begin{equation}\label{equ:P_om_U}
\PFunc(x)=\omega(x) \cdot\Umatr\,x
\end{equation}
Since $\PFunc$ is non-divisible, this identity is possible only if $\omega$ is a constant and in this case $\deg\PFunc=1$.

Hence if $\deg\PFunc\geq 2$, then $\Umatr$ is always zero, whence $T_{E}\Stab=\{0\}$, and $\Stab_{E}=\{E\}$.
This proves (ii).

(i) Suppose that $\deg\PFunc=1$.
Then it follows from Lemma~\ref{lm:tang_stab} that $\{\Lmatr t\}_{t\in\RRR}\subset T_{E}\Stab$.
On the other hand, as noted above for every $\Umatr\in T_{E}\Stab$ there exists $\omega\in\RRR$ such that~\eqref{equ:P_om_U} holds true, whence $\Lmatr=\omega \Umatr$.
Therefore $\Umatr\in \{\Lmatr t\}_{t\in\RRR}$, and thus $\{\Lmatr t\}_{t\in\RRR}= T_{E}\Stab$.
\end{proof}

\section{Proof of Theorem~\ref{th:AST_implies_infjets}}\label{sect:th:AST_implies_infjets}
Suppose that $\BFld$ has property \AST\ at $\orig$.
Thus 
$$
\eta \cdot \BFld = [\grad_x\BFunc_1,\ldots,\grad_x\BFunc_{n-1}],
$$
where $\eta,\BFunc_1,\ldots,\BFunc_{n-1}:\Vman\to\RRR$ are germs of smooth functions satisfying assumptions of Definition~\ref{defn:AST-for-vf}.
We have to show that 
\begin{equation}\label{equ:joSh-joEidBO1}
\jo{\infty}\gimShBO = \jo{\infty}\gEidBOr{1}
\end{equation}
and for every open neighbourhood $\Vman$ of $\orig$ and $g\in\EidBVr{1}$ construct a local $\jo{\infty}$-section of $\ShBV$ defined on some $C^{1}_{W}$-neighbourhood $\NN_{g}$ of $g$ in $\EidBVr{1}$.

\subsection*{Proof of~\eqref{equ:joSh-joEidBO1}.}
Notice that $\grp=\gEidBOr{1}$ is a group which contains $\gimShBO$.
Therefore it suffices to verify conditions \coB\ and \coC\ of Theorem~\ref{th:grp_subset_JSh}.

Similarly to~\eqref{equ:init_jets} set $k=\ord(\eta,\orig)$, $\dg_i =\ord(\BFunc_i,\orig)$, $\gamma=\jo{k}(\eta)$, and $\GFunc_i=\jo{\dg_i}(f)$, $(i=1,\ldots,n-1)$.
Denote $\dg=\sum\limits_{i=1}^{n-1}(\dg_i-1)-k$.
Then 
\begin{equation}\label{equ:Q_cross_prod_Gi}
\gamma \cdot \PFunc=[\grad_x\GFunc_1,\ldots,\grad_x\GFunc_{n-1}],
\end{equation}
where $\PFunc=\jo{\dg}(\BFld)$ is a homogeneous vector field of degree $\dg$.
By assumption $\PFunc$ is non-divisible by homogeneous polynomials.
For $p=1$ we assume that $P(x) = \Lmatr x$ for some non-zero matrix $\Lmatr\in M(\RRR,n)$.

Let $\difM \in \gEBO$.
Then $\difM$ leaves invariant every orbit of $\BFld$ and therefore preserves every function $\BFunc_i$, i.e.\! $\BFunc_i\circ\difM=\BFunc_i$ for all $i=1,\ldots,n$.

\coB~Let $A$ be the Jacobi matrix of $\difM$ at $\orig$, thus $\jo1(\difM)(x) = A\cdot x$.
We have to show that $A=e^{\omega_0 L}$ for some $\omega_0\in\RRR$ if $p=1$, and $A=E$ for $p\geq2$.
This is implied by the following lemma and Lemma~\ref{lm:Stab-G1-Gn-1}.
\begin{lemma}
Let $\Stab=\mathop\cap\limits_{i=1}^{n-1}\Stab(\BFunc_i)$ be the intersection of the stabilizers of $\BFunc_i$ with respect to the action of $\GL(\RRR,n)$, see~\eqref{equ:stab_G1_Gn1}, and $\Stab_{E}$ be the unity component of $\Stab$.
Then for every $\difM\in\gEidBOr{1}$ its Jacobi matrix $A$ at $\orig$ belongs to $\Stab_{E}$.
\end{lemma}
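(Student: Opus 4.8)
The plan is to connect $A=J(\difM,\orig)$ to $E$ by a continuous path lying entirely in $\Stab$, using the isotopy that witnesses $\difM\in\gEidBOr{1}$. By definition of $\gEidBOr{1}$ there exist an open neighbourhood $\Vman$ of $\orig$ and a $1$-isotopy $H:\Vman\times I\to\RRR^n$ with $H_0=i_{\Vman}$, $H_1=\difM$, and $H_t\in\gEBO$ for all $t\in I$. Each $H_t$ is a germ of a diffeomorphism fixing $\orig$, so its Jacobi matrix $A(t):=J(H_t,\orig)$ is a well-defined element of $\GL(\RRR,n)$, with $A(0)=E$ and $A(1)=A$.

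First I would check that $t\mapsto A(t)$ is continuous. Since $H$ is a $1$-isotopy, Definition~\ref{defn:r-homotopies} says that the map $j^1H:\Vman\times I\to J^1(\Vman,\RRR^n)$, $(x,t)\mapsto j^1(H_t)(x)$, is continuous. Restricting it to $\{\orig\}\times I$ and composing with the continuous projection of a $1$-jet at $\orig$ onto its linear part, we obtain that $t\mapsto A(t)$ is continuous, with values in $\GL(\RRR,n)$.

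Next I would show that this path stays in $\Stab$. Each $H_t$ lies in $\gEBO$, hence leaves every orbit of $\BFld$ invariant and therefore preserves every integral, i.e. $\BFunc_i\circ H_t=\BFunc_i$ near $\orig$ for $i=1,\dots,n-1$. Applying Corollary~\ref{cor:deg_Fh-F} with $k=1$ (legitimate since $H_t(\orig)=\orig$) to the function $\BFunc_i$ gives $\GFunc_i(A(t)\,x)=\GFunc_i(x)$ for all $x$, where $\GFunc_i=\jo{\dg_i}(\BFunc_i)$; that is, $A(t)\in\Stab(\GFunc_i)$ for every $i$, hence $A(t)\in\Stab$ for all $t\in I$.

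It remains to conclude. We have produced a continuous path $t\mapsto A(t)$ in $\Stab$ joining $A(0)=E$ to $A(1)=A$. Since $\Stab$ is a closed, hence Lie, subgroup of $\GL(\RRR,n)$, it is locally path-connected, so the path component of $E$ in $\Stab$ coincides with the connected component of $E$, which is the unity component $\Stab_{E}$. Therefore $A\in\Stab_{E}$, as required. I do not expect any substantive obstacle here: the only delicate point is the continuity of $t\mapsto A(t)$, and that is immediate once one unwinds the definition of a $1$-isotopy through the jet bundle $J^1(\Vman,\RRR^n)$; the rest is a direct application of Corollary~\ref{cor:deg_Fh-F} together with the fact that connected Lie groups are path-connected.
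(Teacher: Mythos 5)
Your argument is correct and matches the paper's proof: both use the $1$-isotopy to produce a continuous path of Jacobi matrices $A(t)$ from $E$ to $A$, apply Corollary~\ref{cor:deg_Fh-F} (using that each $H_t\in\gEBO$ preserves every integral $\BFunc_i$) to place the path in $\Stab$, and conclude $A\in\Stab_E$. You are slightly more explicit than the paper about the path lying in $\Stab$ for all $t$ (not just at $t=1$) and about continuity via the $1$-jet bundle, but these are just welcome elaborations of the same reasoning.
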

\begin{proof}
Since $\BFunc_i\circ\difM=\BFunc_i$, $(i=1,\ldots,n-1)$, we get from Corollary~\ref{cor:deg_Fh-F} that $\GFunc_i(A x)=\GFunc_i(x)$.
In other words $A$ belongs to the intersection of the stabilizers $\Stab = \mathop\cap\limits_{i=1}^{n-1}\Stab(\GFunc_i).$
On the other hand the assumption $\difM\in\gEndIdBzr{1}$ means that there exists a $1$-isotopy $(\difM_t)$ in $\gEndBz$ between $\difM_0=\id_{\RRR^{n}}$ and $\difM_1=\difM$.
Let $A_t$ be the Jacobi matrix of $\difM_t$ at $\orig$. 
Since $(\difM_t)$ is $1$-isotopy, we have that $(A_t)$ continuously depend on $t$.
Moreover, $A_0=E$, whence $A=A_1$ belongs to the unity component $\Stab_{E}$ of $\Stab$.
\end{proof}

\coC~Suppose that $\jo{\dg+l}(\difM)(x) = x + \vfunc(x)$, where $\vfunc$ is a non-zero homogeneous map of degree $\dg+l\geq2$.
Since $\BFunc_i\circ\difM=\BFunc_i$, $(i=1,\ldots,n-1)$, we obtain from Corollary~\ref{cor:deg_Fh-F} that $\langle\grad\GFunc_i,\vfunc\rangle=0$, whence $\vfunc$ is parallel to the cross-product of gradients $\grad\GFunc_i$ and therefore to $\PFunc$.
Since $\PFunc$ is non-divisible, it follows that $k\geq\dg$ and there exists a unique non-zero homogeneous polynomial $\omega_{l}\in\Poly{l}$ such that $\vfunc=\PFunc\cdot \omega_{l}.$

This completes the proof of~\eqref{equ:joSh-joEidBO1}.

It remains to construct $\jo{\infty}$-sections of $\ShBV$.
Let $\Vman$ be a neighbourhood of $\orig$.
Then~\eqref{equ:joSh-joEidBO1} implies that $\EidBVr{1}\subset \JShBV$.
If $\dg\geq2$, then by (1) of Theorem~\ref{th:cont-estimations} there exists a $\jo{\infty}$-section of $\ShBV$ on all of $\JShBV$ and therefore on $\EidBVr{1}$.

Suppose that $\dg=1$, so $\PFunc(x)=\Lmatr x$ is a linear vector field.
Notice that the corresponding one-parametric subgroup $E_{\Lmatr}=\{e^{\Lmatr t}\}_{t\in\RRR}$ is closed in $\GL(\RRR,n)$ as the unity component of the intersection of closed subgroups of $\GL(\RRR,n)$ (stabilizers of $\BFunc_i$).
Then by (2) and (3) of Theorem~\ref{th:cont-estimations} for every $g\in\JShBV$ there exists a local $\jo{\infty}$-section of $\ShBV$.
In particular, this holds for all $g\in\EidBVr{1} \subset \JShBV$.
\qed

\section{Reduced Hamiltonian vector fields}\label{sect:vf_on_r2}
Let $\BFunc:\RRR^2\to\RRR$ be a real homogeneous polynomial in two variables, so we can write
\begin{equation}\label{equ:decomp-for-Q}
\BFunc(x,y) = \prod_{i=1}^{l} L_i^{l_i}(x,y) \cdot \prod_{j=1}^{q} Q_j^{q_j}(x,y),
\end{equation}
where $l,q\geq1$, every $L_i$ is a linear function, every $Q_j$ is a definite (i.e. irreducible over $\RRR$) quadratic form, $L_i/L_{i'}\not=\mathrm{const}$ for $i\not=i'$, and $Q_j/Q_{j'}\not=\mathrm{const}$ for $j\not=j'$.
Then it can easily be shown that the polynomial
$$D = \prod_{i=1}^{l} L_i^{l_i-1} \cdot \prod_{j=1}^{q} Q_j^{q_j-1}$$ 
is the greatest common divisor of its partial derivatives $\BFunc'_x$ and $\BFunc'_y$. 
Hence the following \emph{homogeneous} vector field of degree $\dg=l+2q-1$ on $\RRR^2$ 
$$
\BFld= [\grad\BFunc]/D = - (\BFunc'_{y}/D) \; \frac{\partial }{\partial x}  \; + \; 
(\BFunc'_{x}/D) \; \frac{\partial }{\partial y}
$$
is non-divisible by homogeneous polynomials.
Thus $\BFld$ has property \AST.
We will call $\BFld$ the \emph{reduced Hamiltonian vector field} of $\BFld$.
Notice that $\orig$ is a unique singular point of $\BFld$.

The following theorem improves~\cite[Theorem~3.2]{Maks:Hamvf:2006} which was based on the previous version of this paper.
\begin{theorem}\label{th:HamVF}
Let $\BFunc$ be a real homogeneous polynomial in two variables, $\BFld$ be its reduced Hamiltonian vector field, and $\Vman$ be an open neighbourhood of $\orig$.
Then $$\imShBV=\EidBVr{1}$$
and for every $g\in\EidBVr{1}$ there exist a $C^{1}_{W}$-neighbourhood $\NN_{g}$ in $\EidBVr{1}$ and a $C^{\infty,\infty}_{W,W}$-continuous and preserving smoothness section $\JS:\NN_{g}\to C^{\infty}(\Vman,\RRR)$ of $\ShBV$, i.e. for every $\difM\in\NN_{g}$ 
$$ \difM(x) =\ShBV(\JS(\difM))(x) = \BFlow(x,\JS(\difM)(x)).$$
In particular, it follows from Theorem~\ref{th:param-rigid} and Lemma~\ref{lm:psm_splc} that $\BFld$ is parameter rigid.
\end{theorem}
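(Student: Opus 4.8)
The plan is to deduce the theorem from Theorem~\ref{th:AST_implies_infjets}, from one genuinely analytic ingredient --- a section of $\ShBV$ over $\EBVi$, the space of orbit preserving maps whose $\infty$-jet at $\orig$ equals that of $\id$ --- and from Theorem~\ref{th:param-rigid} for the last assertion.

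First I would note that the reduced Hamiltonian vector field $\BFld=[\grad\BFunc]/D$ is a homogeneous vector field on $\RRR^2$ of degree $\dg=l+2q-1\geq1$, non-divisible by homogeneous polynomials, with $\orig$ as its only singular point; by Lemma~\ref{lm:cross-prod-hom-poly} it has property \AST. Hence Theorem~\ref{th:AST_implies_infjets} applies and gives $\jo{\infty}\gimShBO=\jo{\infty}\gEidBOr{1}$. In particular every germ of an element of $\EidBVr{1}$ agrees in $\infty$-jet at $\orig$ with a shift, so $\EidBVr{1}\subset\EBV\cap\JShBV$, and for every $g\in\EidBVr{1}$ Theorem~\ref{th:AST_implies_infjets} furnishes a $C^{1}_{W}$-neighbourhood $\NN_{g}$ of $g$ in $\EidBVr{1}$ together with a $\jo{\infty}$-section $\JS$ of $\ShBV$ on $\NN_{g}$.

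The main obstacle is the next step: producing a $C^{\infty,\infty}_{W,W}$-continuous, preserving-smoothness section $\SI:\EBVi\to C^{\infty}(\Vman,\RRR)$ of $\ShBV$, i.e.\ a smooth solution of $\difM(x)=\BFlow(x,\SI(\difM)(x))$ for every $\difM$ with $\jo{\infty}(\difM)=\jo{\infty}(\id)$; here the local structure of the singularity genuinely enters. I would proceed as in~\cite{Maks:Hamvf:2006}, which treats exactly homogeneous vector fields on $\RRR^2$ with property \AST: pass to a polar (blow-up) chart, where away from $\orig$ the shift function is recovered along orbits as in~\ref{subsect:sigma_smooth_at_reg}, and then use the flatness hypothesis $\jo{\infty}(\difM)=\jo{\infty}(\id)$ to force the resulting function --- a priori defined and smooth only off the exceptional set --- to extend smoothly across $\orig$; the linear factors $L_i$ and the definite quadratic factors $Q_j$ of $\BFunc$ govern, respectively, the hyperbolic and the elliptic sectors that must be handled separately, and all the continuity and smoothness-preservation estimates are carried out there.

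With $\JS$ and $\SI$ in hand I would apply Lemma~\ref{lm:inf_sect_alm_sect} with $\XX=\NN_{g}$: the map $\RS(\difM)=\JS(\difM)+\SI(\hBshh{-\JS(\difM)})$ is then a $C^{\infty,\infty}_{W,W}$-continuous, preserving-smoothness section of $\ShBV$ on $\NN_{g}$, as required. Since $\difM=\ShBV(\RS(\difM))$ for every $\difM\in\NN_{g}$ and $g\in\EidBVr{1}$ was arbitrary, this yields $\EidBVr{1}\subset\imShBV$; combined with the inclusions $\imShBV\subset\EidBVr{\infty}\subset\cdots\subset\EidBVr{1}$ of~\eqref{equ:inclusions_sh_concmp} we obtain $\imShBV=\EidBVr{\infty}=\EidBVr{1}$. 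Finally, since $\singB=\{\orig\}$ is nowhere dense in $\Vman$ and every element of $\imShBV=\EidBVr{1}$ lies in some $\NN_{g}$ carrying the preserving-smoothness section $\RS$, Lemma~\ref{lm:psm_splc} shows that $\ShBV$ satisfies the smooth path-lifting condition; together with $\imShBV=\EidBVr{\infty}$ this is exactly the hypothesis of Theorem~\ref{th:param-rigid} at the unique singular point $\orig$, so $\BFld$ is parameter rigid.
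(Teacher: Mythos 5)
Your proposal is correct and follows essentially the same route as the paper: invoke Theorem~\ref{th:AST_implies_infjets} (via property \AST\ from Lemma~\ref{lm:cross-prod-hom-poly}) for the local $\jo{\infty}$-sections, defer the construction of $\SI:\EBVi\to C^{\infty}(\Vman,\RRR)$ to the analytic work of~\cite{Maks:Hamvf:2006}, and combine them with Lemma~\ref{lm:inf_sect_alm_sect} to obtain the section $\RS$ and hence $\imShBV=\EidBVr{1}$, finally verifying the hypotheses of Lemma~\ref{lm:psm_splc} and Theorem~\ref{th:param-rigid} for parameter rigidity. The only small point the paper stresses that you pass over lightly is that~\cite[Th.~3.2]{Maks:Hamvf:2006} is formally stated under the extra hypothesis $D\equiv1$ (no multiple factors in $\BFunc$), and one must observe that its proof actually only uses coprimality of the coordinate functions of $\BFld$ (i.e.\ property~\AST), so it carries over verbatim to the general reduced Hamiltonian field.
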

\begin{proof}
Since $\BFld$ has property \AST, it follows from Theorem~\ref{th:AST_implies_infjets} that for every $g\in\EidBVr{1}$ there exists a $\jo{\infty}$-section defined on some $C^{1}_{W}$-neighbourhood of $g$ in $\EidBVr{1} \subset \EBV\cap\JShBV$.
Therefore by Lemma~\ref{lm:inf_sect_alm_sect} it suffices to construct a $C^{\infty,\infty}_{W,W}$-continuous and preserving smoothness section $\SI:\EBVi\to C^{\infty}(\Vman,\RRR)$ of $\ShBV$.

For the case $D\equiv1$, i.e., when $\BFunc$ has no multiple factors, such a section was constructed in~\cite[Theorem~3.2]{Maks:Hamvf:2006}.
The detailed analysis of the proof shows that \cite[Theorem~3.2]{Maks:Hamvf:2006} uses only the assumption that coordinate functions of $\BFld$ are relatively prime in $\RRR[x,y]$, i.e. that $\BFld$ has property \AST, but not the assumption that $\BFunc$ has no multiple factors.
This implies that the same arguments prove an existence of $\SI$ for arbitrary $\BFunc$.
The details are left for the reader.
\end{proof}

\section{Acknowledgments}
I am sincerely grateful to V.\;V.\;Sharko, V.\;V.\;Lyubashenko, E.\;Po\-lu\-lyakh, A.\;Prish\-lyak, I.\;Vlasenko, I.\;Yurchuk, and O.\;Burylko for useful discussions and interest to this work.
I would like to thank anonymous referee for constructive remarks which allow to clarify exposition of the paper.

 \bigskip
\noindent
{\sc Sergiy Maksymenko} \\
Topology department, Institute of Mathematics of NAS of Ukraine, \\
Tereshchenkivs'ka st. 3, Kyiv, 01601 Ukraine\\
e-mail: \texttt{maks@imath.kiev.ua}

\end{document}